\newcommand{\CC}{{\rm\bf C}}
\newcommand{\RR}{{\rm\bf R}}
\newcommand{\QQ}{{\rm\bf Q}}
\newcommand{\ZZ}{{\rm\bf Z}}
\newcommand{\Adeles}{{\rm\bf A}}
\newcommand{\OO}{\mathcal {O}}
\DeclareMathOperator{\Spec}{\mathrm{Spec}}
\DeclareMathOperator{\GL}{\mathrm{GL}}
\DeclareMathOperator{\GSpin}{\mathrm{GSpin}}
\DeclareMathOperator{\Oo}{\mathrm {O}}
\DeclareMathOperator{\SO}{\mathrm {SO}}
\DeclareMathOperator{\Aut}{\mathrm {Aut}}
\DeclareMathOperator{\End}{\mathrm {End}}
\DeclareMathOperator{\Hom}{\mathrm {Hom}}
\DeclareMathOperator{\kernel}{\mathrm {ker}}
\DeclareMathOperator{\image}{\mathrm {im}}
\DeclareMathOperator{\tr}{\mathrm{tr}}
\DeclareMathOperator{\Gal}{\mathrm {Gal}}
\DeclareMathOperator{\ind}{\mathrm{Ind}}
\DeclareMathOperator{\res}{\mathrm{Res}}
\DeclareMathOperator{\Lie}{\mathrm{Lie}}
\DeclareMathOperator{\sgn}{\mathrm{sgn}}
\DeclareMathOperator{\Int}{\mathrm {Int}}
\newcommand{\liea}{{\mathfrak {a}}}
\newcommand{\lieb}{{\mathfrak {b}}}
\newcommand{\liec}{{\mathfrak {c}}}
\newcommand{\lied}{{\mathfrak {d}}}
\newcommand{\lieg}{{\mathfrak {g}}}
\newcommand{\lieh}{{\mathfrak {h}}}
\newcommand{\liek}{{\mathfrak {k}}}
\newcommand{\liel}{{\mathfrak {l}}}
\newcommand{\liem}{{\mathfrak {m}}}
\newcommand{\lien}{{\mathfrak {n}}}
\newcommand{\liep}{{\mathfrak {p}}}
\newcommand{\lieq}{{\mathfrak {q}}}
\newcommand{\lies}{{\mathfrak {s}}}
\newcommand{\liet}{{\mathfrak {t}}}
\newcommand{\lieu}{{\mathfrak {u}}}
\newcommand{\liev}{{\mathfrak {v}}}
\newcommand{\liegk}{{\mathfrak {gk}}}
\theoremstyle{Theorem}
\newtheorem{introconjecture}{Conjecture}
\newtheorem{introtheorem}[introconjecture]{Theorem}
\theoremstyle{plain}
\newtheorem{theorem}{Theorem}[section]
\newtheorem{lemma}[theorem]{Lemma}
\newtheorem{corollary}[theorem]{Corollary}
\newtheorem{proposition}[theorem]{Proposition}
\newtheorem{conjecture}[theorem]{Conjecture}
\theoremstyle{remark}
\begin{document}

\title{On Period Relations for\\Automorphic $L$-functions II}
\author{Fabian Januszewski}
\date{\today}

\maketitle

\begin{abstract}
We study Hecke algebras for pairs $(\lieg,K)$ over arbitrary fields $E$ of characteristic $0$, define the Bernstein functor and give another definition of the Zuckerman functor over $E$. Building on this and the author's previous work on rational structures on automorphic representations, we show that hard duality remains valid over $E$ and apply this result to the study of rationality properties of Sun's cohomologically induced functionals. Our main application are period relations for the special values of standard $L$-functions of automorphic representations of $\GL(2n)$ admitting Shalika models.
\end{abstract}

{
\tableofcontents
}

\section*{Introduction}

In \cite{januszewskipart1} the author used the rational structures constructed in \cite{januszewskipreprint} to prove period relations for Rankin-Selberg $L$-functions. Here we generalize the method to standard $L$-functions of $\GL(2n)$ for cuspidal representations which are lifts from globally generic cuspidal representations of $\GSpin(2n+1)$. In order to prove this, we are naturally led to study Bernstein functors and hard duality over arbitrary fields of characteristic $0$, which is also of independent interest.

Our main result on special values is the following. For each irreducible regular algebraic cuspidal automorphic representations $\Pi$ of $\GL(2n)$ over a totally real field $F$ which is a Langlands lift from a globally generic cuspidal representation of $\GSpin(2n+1)$, subject to a mild condition on the weight, we have for each $k\in\ZZ$ such that $s=\frac{1}{2}+k$ is critical for the standard $L$-function $L(s,\Pi)$ in the sense of Deligne \cite{deligne1979},
\begin{equation}
L(\frac{1}{2}+k,\Pi)\;\in\;(2\pi i)^{m\cdot k}\cdot\QQ(\Pi)\cdot\Omega_{(-1)^k}(\Pi)
\label{eq:introperiodrelation}
\end{equation}
with two complex constants $\Omega_\pm(\Pi)\in\CC^\times$ independent of $k$, $\QQ(\Pi)$ the field of definition of $\Pi$.


To such a $\Pi$ we may conjecturally attach a motive $M(\Pi)$ with the property that
\begin{equation}
L(s-\frac{2n-1}{2},\Pi)\;=\;L(s,M(\Pi)).
\label{eq:pimotivel}
\end{equation}
Assuming the existence of $M(\Pi)$, Deligne's Conjecture \cite{deligne1979} predicts, among many other things, that \eqref{eq:introperiodrelation} is true with $m=nr_F$, where $r_F$ is the degree of $F$ over $\QQ$ (cf.\ section \ref{sec:deligne}).

As is well known, we have an automorphic cohomological definition for periods $\Omega_\pm(\Pi,s_0)\in\CC^\times$ under mild conditions on the \lq{}weight\rq{} of $\Pi$, such that \eqref{eq:introperiodrelation} holds with periods {\em varying} with $k$.

It is known that the periods $\Omega_\pm(\Pi,s_0)\in\CC^\times$ behave as expected under twists with finite order Hecke characters (cf.\ \cite{ashginzburg1994,grobnerraghuram2014}).

The contents of our Theorems A below (Theorem \ref{thm:globalshalikaperiods} in the text) is that the predicted period relations for varying $s_0$ are indeed satisfied. The case $n=1$ coincides with the previously treated Rankin-Selberg theory from \cite{januszewskipart1}, and corresponds to the case of Hilbert modular forms on $\GL(2)/F$, which is well known (cf.\ \cite{manin1972,shimura1976,manin1976,shimura1977,shimura1978}).

\begin{introtheorem}\label{introthm:globalshalikaperiods}
Assume $n\geq 1$, let $F$ be a totally real number field of degree $r_F$ over $\QQ$, and $\Pi$ be a algebraic irreducible cuspidal automorphic representation of $\GL_{2n}(\Adeles_F)$ admitting a Shalika model. Assume that $\Pi_\infty$ has non-trivial relative Lie algebra cohomology with coefficients in an irreducible rational $G_{2n}$-module $M$, which we assume to be critical in the sense of section \ref{sec:artihmeticity}. Then there exist non-zero periods $\Omega_\pm$, numbered by the $2^{[F:\QQ]}$ characters $\pm$ of $\pi_0(F_\infty^\times)$, such that for each critical half integer $s_0=\frac{1}{2}+j_0$ for the standard $L$-function $L(s,\Pi)$, and each finite order Hecke character
$$
\chi:\quad F^\times\backslash\Adeles_F^\times\to\CC^\times,
$$
we have, in accordance with Deligne's Conjecture (cf.\ Conjecture \ref{conj:shalikadeligne}),
$$
\frac{L(s_0,\Pi\otimes\chi)}
{G(\overline{\chi})^{n}(2\pi i)^{j_0r_Fn}
\Omega_{(-1)^{j_0}\sgn\chi}}
\;\in\;
\QQ(\Pi,\chi).
$$
Furthermore the expression on the left hand side is $\Aut(\CC/\QQ)$-equivariant.
\end{introtheorem}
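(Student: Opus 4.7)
The plan is to combine the Friedberg--Jacquet integral representation of the standard $L$-function via the Shalika model with the rationality properties of Sun's cohomologically induced functionals established earlier in the paper. This is the same overall strategy as in \cite{januszewskipart1} for Rankin--Selberg, but here the representation-theoretic input on the archimedean side is genuinely new and is exactly what the preceding sections on Bernstein functors and hard duality over $E$ are designed to provide.

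First, I would use Friedberg--Jacquet to express $L(s_0,\Pi\otimes\chi)$, up to explicit local Gauss sum factors and powers of $2\pi i$, as a global Shalika period on the subgroup $\GL(n)\times\GL(n)\subset\GL(2n)$. Choosing a cusp form $\varphi\in\Pi$ whose archimedean component realises a class in relative Lie algebra cohomology $H^{\bullet}(\lieg_\infty,K_\infty;\Pi_\infty\otimes M)$ converts the period integral at $s_0=\tfrac{1}{2}+j_0$ into a cohomological pairing. The unramified local computation at every finite place together with the archimedean normalisation of the power $|\cdot|^{j_0}$ produces, in a $\QQ(\Pi,\chi)$-rational manner, the Gauss sum $G(\overline{\chi})^{n}$ (one Gauss sum per $\GL(n)$-block of the Shalika Levi) and the factor $(2\pi i)^{j_0 r_F n}$ (one power of $2\pi i$ per infinite place of $F$).

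Second, the archimedean Shalika functional is realised, via Sun's non-vanishing theorem and the cohomologically induced construction, by a non-zero element in a one-dimensional cohomology space. The decisive input is that, by the rationality theorem for Sun's functionals proved in the main body via hard duality over arbitrary characteristic-zero fields, this class descends to $\QQ(\Pi)$. With this in hand, I would define the intrinsic periods $\Omega_\pm$, indexed by the characters of $\pi_0(F_\infty^\times)$, as the comparison constants between the Whittaker rational structure on $\Pi$ from \cite{januszewskipreprint} and the cohomological rational structure determined by Sun's functional, restricted to the respective $\pm$-isotypic components.

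Third, one must verify that a single pair $\Omega_\pm$ works uniformly across all critical $s_0$, and that the resulting normalised quotient is $\Aut(\CC/\QQ)$-equivariant. Uniformity in $j_0$ follows because the twist by $\chi\cdot|\cdot|^{j_0}$ only alters the cohomological class by factors already pulled out as $(2\pi i)^{j_0 r_F n}$ and $G(\overline{\chi})^{n}$; the propagation of rationality across the entire family of critical twists is precisely the content of the $E$-rational theory of cohomologically induced functionals developed in the paper. Galois equivariance then follows from the functoriality in $\Aut(\CC/\QQ)$ of every ingredient: the rational Whittaker model, Sun's $\QQ(\Pi)$-rational cohomology class, the finite-place integrals, and the cohomological pairing.

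The main obstacle is exactly this $j_0$-independence of $\Omega_\pm$. The varying-period results of Ash--Ginzburg and Grobner--Raghuram yield periods $\Omega_\pm(\Pi,s_0)$ that a priori depend on $s_0$; collapsing them to one pair requires that Sun's functionals for the full family of critical twists be simultaneously rational over $\QQ(\Pi)$, which is precisely what the paper's extension of Bernstein functors and hard duality to arbitrary fields of characteristic $0$ is designed to deliver. I expect this step to require the most careful bookkeeping in the final write-up; everything else is, in principle, a controlled repetition of the strategy from \cite{januszewskipart1}.
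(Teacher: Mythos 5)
Your proposal follows the paper's strategy faithfully: Friedberg--Jacquet integral representation, cohomological interpretation via the modular symbol, Sun's cohomologically induced functionals with rationality provided by the $E$-rational theory of Bernstein functors and hard duality (Theorem \ref{thm:rationalfunctionals}), periods defined as normalisation constants making the global cohomology class rational, and Galois equivariance from the functoriality of all the ingredients. You correctly identify the collapse of the $s_0$-dependent periods into a single pair as the crux.

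One step, however, is stated circularly rather than argued. You write that ``uniformity in $j_0$ follows because the twist by $\chi\cdot|\cdot|^{j_0}$ only alters the cohomological class by factors already pulled out as $(2\pi i)^{j_0 r_F n}$ and $G(\overline{\chi})^{n}$''; but that these are \emph{the} factors that come out, and in particular that the power of $i$ is exactly $j_0 r_F n$, is precisely what has to be proved, and rationality of the archimedean functional over $\QQ(\mu)$ (Theorem \ref{thm:rationalfunctionals}) is not by itself sufficient. The missing step is the archimedean period relation of Theorem \ref{thm:icontribution}: it compares $c_{\sgn_\infty\otimes\chi}$ with $i^{r_F n}\cdot c_\chi$ and its proof goes through a fine analysis of the bottom layer decomposition over $E_K(\mu)$, the action of complex conjugation on it, the existence of a single $\QQ(\mu)$-rational test vector $t_0$ nonvanishing simultaneously for all $\chi$ (Theorem \ref{thm:rationaltestvector}), and the parity distinction $2\mid n$ versus $2\nmid n$ via the $D_n$ root system automorphism. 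This is where the $(2\pi i)^{j_0 r_F n}$ normalisation is actually established, not merely ``pulled out.'' Two minor corrections: the relevant rational structure at the finite places in the sense of Grobner--Raghuram is the \emph{Shalika} model rational structure (their paragraphs 3.6--3.9), not the Whittaker one; and the exponent $n$ on the Gauss sum comes from the local nonarchimedean computation (Proposition 5.2.3 of Grobner--Raghuram), not from counting $\GL(n)$-blocks of the Shalika Levi.
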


Our proof of Theorem A uses similar methods as applied to the Rankin-Selberg case in \cite{januszewskipart1}. In particular we make use of Theorem 2.3 of loc.\ cit., which allows us to control the field of rationality of the $G(\RR)^0$-representations occuring in $\Pi_\infty$. Once again the remarkable fact that a root system of type $D_n$ admits the negated long Weyl element $-w_0$ as a non-trivial automorphism if and only if $n$ is odd implies that the field of rationality of the irreducible $G(\RR)^0$-subrepresentations of $\Pi_\infty$ matches the contribution of $\sqrt{-1}$ to the period relation \eqref{eq:introperiodrelation}.

However, in the case at hand we need a more refined argument to show that the archimedean zeta integral, evaluated at critical points, produces $\QQ(\Pi)$-{\em rational} functionals on $\Pi_\infty$, considered as a Casselman-Wallach representation.

Even though $(\GL_{2n}(\RR),\GL_n(\RR)\times\GL_n(\RR))$ is known to be a Gelfand pair (cf.\ \cite{aizenbudginzburgsayag2009}), we do not know if it satisfies the strong Gelfand property. Currently we only know that for {\em generic} (quasi-)characters $\chi$ of $H:=\GL_n(\RR)\times\GL_n(\RR)$ we have
$$
\dim\Hom_{H}(\Pi_\infty,\chi)\leq 1.
$$
As a result we presently do not know the corresponding multiplicity one statement on the level of $(\lieg,K)$-modules, even though the automatic continuity theorem from \cite{bandelorme1988,brylinskidelorme1992} we invoke holds for general $\chi$.

In order to overcome this obstacle we consider Sun's cohmologically induced functionals as introduced in \cite{sunpreprint2} in his proof of the non-vanishing of the Shalika periods which are under investigation here. The key idea is to show that for critical $s$ these functionals are defined over the field of rationality of the representation, which we achieve in Theorem \ref{thm:rationalfunctionals}.

The proof of this theorem leads us to the consideration of Hard Duality for Zuckerman and Bernstein functors over arbitrary fields of characteristic $0$. Zuckerman functors over arbitrary fields were already introduced by the author in \cite{januszewskipreprint}, and their $dg$ analogues over arbitrary rings were introduced by Hayashi \cite{takumapreprint1}, who also defined $dg$ analogues of Bernstein functors over $\CC$. However Bernstein functors have not yet been defined over fields other than the complex numbers. In oder to give a unified treatament of all functors and also of the construction of functionals we generalize the appropriate Hecke algebras to arbitrary base fields of characteristic $0$.

In our treatment we are conceptually close to the monograph \cite{book_knappvogan1995} for the simple reason that we need detailed knowledge about the interal structure of the object and in particular we rely on several fundamental compatibility relations in our applications, which in many cases can be readily deduce from the detailed treatment in loc.\ cit. There is no doubt that there are more efficient ways to prove the statement Hard Duality for different definitions of the Bernstein and Zuckerman functors as in loc.\ cit., but in order to establish all the compatibility relations alluded to above, it seems that the total effort would remain the same.

We hope that the representation theoretically not so inclinded reader appreciates our decistion to go along these lines, as it allows us to give a single reference for almost all representation theoretic statements we need.

The paper is divided in two parts. The first two sections treat the general theory of Hecke algebras, Bernstein and Zuckerman functors and Hard Duality over general fields of characteristic $0$. The third section complements this with a conceptual construction and generalization of Sun's cohomologically induced functionals. The remaining five sections treat our application to special values of $L$-functions. These sections rely crucially on the general results from the first three sections.

This work was part of the author's Habilitation thesis. It is the author's great pleasure to thank Claus-G\"unther Schmidt and Stefan K\"uhnlein from Karlsruhe, Don Blasius and Haruzo Hida from UCLA, and Anton Deitmar from T\"ubingen, for being available as members of the author's Habilitation committee. The author also thanks Dragan Milicic and Pavle Pand\v{z}i\'c for very helpful conversations about hard duality.

\section*{Notation}

We let $\overline{\QQ}\subseteq\CC$ denote the algebraic closure of $\QQ$ inside $\CC$. If $v$ is a place of a field $E$, we write $E_v$ for its completion at $v$. If $E/\QQ$ is a number field, we write $\Adeles_E=\Adeles\otimes_\QQ E$ for the topological ring of adeles over $E$, where $\Adeles$ denotes the ring of adeles over $\QQ$. We write $\Adeles_E^{(\infty)}=\Adeles^{(\infty)}\otimes_\QQ E$ for the ring of finite adeles.

If $E/F$ is a finite separable field extension, we write $\res_{E/F}$ for the functor of restriction of scalars \`a la Weil for the extension $E/F$, sending quasi-projective varieties over $E$ to quasi-projective varieties over $F$. This functor preserves finite products and therefore sends group objects to group objects.

If $G$ is an algebraic or a topological group, we denote its connected component of the identity by $G^0$. If $\lieg$ is the (complexified) Lie algebra of a real Lie group $G$, we let $U(\lieg)$ denote its universal enveloping algebra. If $G$ is an algebraic group defined over a field $E$, then $\lieg$ and $U(\lieg)$ are defined over $E$ as well and if we emphasize that we consider these objects over $E$, we write $\lieg_E$ and $U(\lieg_E)$ or also $U_E(\lieg)$.

If $G$ is an algebraic group defined over a field $E$, we always assume that its connected component $G^0\subseteq G$ is always defined over $E$ as well. In other words $G^0$ is a geometrically connected subgroup of $G$, defined over $E$, and of finite index in $G$.

We write
$$
X(G)\;=\;\Hom(G,\GL_1)
$$
for the group of rational characters of $G$. If $G$ is defined over a field $E$, we denote by $X_E(G)$ the subgroup of characters which are defined over $E$.

Over $\RR$, reductive pairs are understood as in \cite{book_knappvogan1995}, i.e.\ a reductive pair $(\lieg,K)$ consits of a pair $(\lieg,K)$ where $\lieg$ is reductive, and furthermore a real form $\lieg_0$ of $\lieg$, a Cartan involution $\theta$ on $\lieg_0$, and a non-degenerate bilinear form $\langle\cdot,\cdot\rangle$ on $\lieg_0$ are given, subject to the usual compatibility conditions (cf.\ Definition  4.30 in loc.\ cit.).

To such a pair we may associate unique reductive Lie group $G$ with Lie algebra $\lieg_0$ containing $K$ as a maximal compact subgroup in such a way that the map $(\lieg,K)\mapsto (G,K)$ sets up an equivalence between (the categories of) classical reductive pairs and real reductive Lie groups.

If $(V,\rho)$ is a Casselman-Wallach representation of a real reductive Lie group $G$, i.e.\ $V$ is a finitely generated, admissible, Fr\'echet representation of moderate growth. If $K\subseteq G$ is a maximal compact subgroup, we write $V^{(K)}$ for the subspace of $K$-finite vectors. This is a finitely generated admissible $(\lieg,K)$-module, and is irreducible if and only if $V$ is (topologically) irreducible. More generally the categories of finite length $(\lieg,K)$-modules is equivalent to the category of Casselman-Wallach representations of $G$.

A superscript $(\cdot)^\vee$ on a (rational / locally $K$-finite) representation of $G$, of $G(\RR)$ or a $(\lieg,K)$-module denotes its (rational / locally $K$-finite) dual.

We assume without loss of generality that all Haar measures on totally disconnected groups we consider have the property that the volumes of compact open subgroups are {\em rational numbers}.

We adopt the convention that for any vector space $V$ the natural map
$$
\bigwedge^p V\otimes \bigwedge^q V\;\to\;
\bigwedge^{p+p}V,
$$
is always uniquely determined by the order of the factors in the tensor product, i.e.\ the above map is given by
$$
v\otimes w\;\mapsto\;v\wedge w.
$$

\section{Hecke Algebras}

In this section we first recall the notion of pair $(\liea,B)$ over an arbitrary base field $F$ of characteristic $0$ and also the fundamental properties of the category of $(\liea,B)$-modules and $(\lieg,K)$-modules over $F$ discussed in \cite{januszewskipreprint}. Then we go on to define Hecke algebras for pairs $(\liea,B)$ over arbitrary base fields of characteristic $0$. Such Hecke algebras were defined for reductive pairs over $\CC$ in the 1970ies by Deligne and Flath. Knapp and Vogan give a systematic and general treatment over $\CC$ in Chapter 1 in their monograph \cite{book_knappvogan1995}. Many of their arguments carry over to the more general context once the Hecke algebras are appropriately defined. In particular the definition and fundamental properties of the Zuckerman and Bernstein functors, as well as their adjoints are straightforward.

\subsection{Rational models of pairs and $(\lieg,K)$-modules}

In this section $G$ denotes a general reductive group over a general base field $F$ of characteristic $0$. We also let $K\subseteq G$ denote a closed reductive subgroup defined over $F$.

We write $\lieg$, $\liek$, ... for the Lie algebras of $G$, $K$, ... respectively. All these Lie algebras are defined over $F$. To stress the base field $E/F$ under consideration, we write
$$
\lieg_E\;:=\;\lieg\otimes_F E,
$$
and similarly
$$
G_E\;:=\;G\times_F E
$$
for the base change of $G\to\Spec E$ to $G_E\to\Spec E$. Then $(\lieg_E,K_E)$ is a reductive pair over $E$ in the sense of \cite{januszewskipreprint}. For the sake of readability we introduce the notation
$$
(\lieg,K)_E\;:=\;(\lieg_E,K_E),
$$
that we also apply to more general pairs. If the base field $E$ is clear from the context, we drop it from the notation.

We give a brief sketch of the theory of Harish-Chandra modules over arbitrary fields of characteristic $0$. For the fundamental theory of modules and cohomological induction over arbitrary fields, as well as the rationality results we need, we refer to \cite{januszewskipreprint}.

A pair over a field $E/\QQ$ consists of an $E$-Lie algebra $\liea_E$ and a reductive algebraic group $B_E$ over $E$, together with an inclusion
$$
\iota_E:\quad \Lie(B_E)\to\liea_E,
$$
of Lie algebras and an action of $B_E$ on $\liea_E$, extending the action of $B_E$ on $\Lie(B_E)$, whose derivative is the adjoint action of $\Lie(B_E)$ on $\liea_E$, the latter action being induced by $\iota_E$.

Then an $(\liea,B)_E$-module $X_E$ consists of an $E$-vector space $X_E$ together with compatible actions of $\liea_E$ and $B_E$. Here we implicitly assume that $X_E$ is a {\em rational} $B_E$-module, which amounts to saying that it is a direct sum of finite-dimensional rational representations of $B_E$. Then this rational action induces an action of $\Lie(B_E)$ on $X_E$, and we assume the action of $\liea_E$ to be an extension of this action. Furthermore we assume the given adjoint action of $B_E$ on $\liea_E$ and the given action on $X_E$ to be compatible with the action of $\liea_E$ in the usual sense.

Then the category $\mathcal C(\liea,B)_E$ of $(\liea,B)_E$-modules (over $E$) is an abelian category, even an $E$-linear tensor category. For each extension of fields $E'/E$ we have natural base change functors
$$
-\otimes_E E':\quad \mathcal C(\liea,B)_E\to \mathcal C(\liea,B)_{E'},
$$
sending an $E$-rational module $X_E$ to the $E'$-rational module
$$
X_{E'}\;:=\;X_E\otimes_E E',
$$
which is an $(\liea,B)_{E'}$-module. A fundamental property of the base change functor is
$$
\Hom_{\liea,B}(X_E,Y_E)\otimes_E E'\;=\;
\Hom_{\liea,B}(X_{E'},Y_{E'})
$$
for all $X_E,Y_E\in\mathcal C(\liea,B)_E$, cf.\ Proposition 1.1 in \cite{januszewskipreprint}.

We write $\widehat{B}_E$ for the set of isomorphism classes of irreducible rational $B$-modules over $E$. If $B$ is connected and $E$ is algebraically closed, this set is parametrized by classical highest weights.

\subsection{Hecke algebras for reductive groups}\label{sec:reductivehecke}

In this section $E$ is any field of characteristic $0$. Let $K$ be a reductive linear group over $E$, not necessarily connected. We let $\OO_E(K)$ denote the coordinate ring of $K$ over $E$. Then $\OO_E(K)$ is a commutative $k$-algebra of finite type, furnished with an additional structure of coalgebra with antipode, stemming from the group structure of $K$. In particular $\OO_E(K)$ is a commutative Hopf algebra. As $K$ is reductive, $\OO_E(K)$ is cosemisimple.

For the elementary definitions and facts about Hopf algebras, Hopf modules, coalgebras, comodules, etc.\ we use, we refer to \cite{book_sweedler1969}.

\subsubsection{Rational representations}

A rational (left) representation $(V,\rho)$ of $K$ over $E$ is the same as a right $H$-comodule structure on $V$. This carries over to not necessarily finite-dimensional $E$-vector spaces $V$: A rational action of $K$ on $E$ is an $\OO_E(K)$-comodule structure on $V$, i.e.\ a $k$-linear map
$$
\rho:\quad V\to V\otimes\OO_E(K)
$$
satisfying the usual properties of coassociativity and counity.

We remark that the coalgebra structure on $\OO_E(K)$ induces an algebra structure on the $E$-linear dual $\OO_E(K)^*$. Then every right $\OO_E(K)$-comodule $V$ is naturally a left $\OO_E(K)^*$-module and the $\OO_E(K)^*$-modules arising this way are called {\em rational}. The category of rational $\OO_E(K)^*$-modules is naturally equivalent to the category of rational $K$-modules.

\subsubsection{Definition of the Hecke algebra}

As $\OO_E(K)$ is cosemisimple as coalgebra, we have a decomposition
\begin{equation}
\OO_E(K)\;=\;\bigoplus_\rho \OO_{\rho,E}
\label{eq:ocodecomposition}
\end{equation}
into simple coalgebras $\OO_{\rho,E}$. We emphasize that this direct sum decomposition is unique.

\begin{proposition}\label{prop:leftrightideals}
For $\lambda\in\OO_E(K)^*$ the following statements are equivalent:
\begin{itemize}
\item[(a)] There exists a left coideal $I\subseteq\OO_E(K)$ of finite codimension with $I\subseteq\ker\lambda$.
\item[(b)] There exists a right coideal $I\subseteq\OO_E(K)$ of finite codimension with $I\subseteq\ker\lambda$.
\item[(c)] There exists a coideal $J\subseteq\OO_E(K)$ of finite codimension with $I\subseteq\ker\lambda$.
\end{itemize}
\end{proposition}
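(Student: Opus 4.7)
The plan is to settle the trivial direction first and then reduce the content of the proposition to a single implication. Since any two-sided coideal is both a left and a right coideal, (c) immediately implies (a) and (b). Conversely, the antipode of the Hopf algebra $\OO_E(K)$ is an involutive anti-coalgebra automorphism which interchanges left and right coideals while preserving $\ker\epsilon$; this reduces (b)$\Rightarrow$(c) to (a)$\Rightarrow$(c), and I therefore concentrate on the latter.

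Given a cofinite left coideal $I\subseteq\OO_E(K)$ with $I\subseteq\ker\lambda$, my first step is to pass to the finite-dimensional quotient $V:=\OO_E(K)/I$, which carries the structure of a finite-dimensional rational $K$-representation. Let $S$ denote the (necessarily finite) set of isomorphism classes of simple rational $K$-modules occurring in $V$. The second step exploits the cosemisimplicity decomposition \eqref{eq:ocodecomposition}: each simple subcoalgebra $\OO_{\rho,E}$ is finite-dimensional, and for the appropriate one-sided regular action of $K$ on $\OO_E(K)$ is isotypic of type $\rho$. Since the projection $\OO_E(K)\to V$ is $K$-equivariant and $V$ involves only types in $S$, it annihilates $\OO_{\rho,E}$ for every $\rho\notin S$. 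This yields the cofinite subcoalgebra
$$
D'\;:=\;\bigoplus_{\rho\notin S}\OO_{\rho,E}\;\subseteq\;I\;\subseteq\;\ker\lambda.
$$

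The main obstacle is that $D'$ is generally not a coideal in the sense of \cite{book_sweedler1969}: although $\Delta(D')\subseteq D'\otimes D'$, the counit $\epsilon$ need not vanish on $D'$, since each simple subcoalgebra carries its own nonzero counit. To remedy this I would replace $D'$ by
$$
J\;:=\;D'\cap\ker\epsilon,
$$
which still has finite codimension, at most one larger than that of $D'$. The condition $\epsilon(J)=0$ then holds by construction, and it remains to verify $\Delta(J)\subseteq J\otimes\OO_E(K)+\OO_E(K)\otimes J$. For $c\in J$ the subcoalgebra property gives $\Delta(c)\in D'\otimes D'$, while the counit axiom yields $(\epsilon\otimes\epsilon)(\Delta c)=\epsilon(c)=0$. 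Since $J$ has codimension at most one inside $D'$, choosing any $u\in D'$ with $\epsilon(u)=1$ and splitting $D'=J\oplus Eu$ identifies the kernel of $\epsilon\otimes\epsilon:D'\otimes D'\to E$ with $J\otimes D'+D'\otimes J$; hence $\Delta(c)$ lies in the required subspace. This produces the desired cofinite two-sided coideal $J\subseteq\ker\lambda$ and completes (c).
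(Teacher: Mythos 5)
Your proof is correct and rests on the same mechanism as the paper's, namely the cosemisimplicity decomposition \eqref{eq:ocodecomposition}. The presentational difference is minor: you pass to the finite-dimensional quotient $V=\OO_E(K)/I$ and read off its finitely many constituent $K$-types, whereas the paper decomposes $I$ itself as $\bigoplus_\rho(\OO_{\rho,E}\cap I)$ and observes that $\OO_{\rho,E}\subseteq I$ for all but finitely many $\rho$; these are dual phrasings of the same observation. The one genuine refinement in your write-up is the last step. You correctly flag that $D'=\bigoplus_{\rho\notin S}\OO_{\rho,E}$ is a subcoalgebra on which $\epsilon$ does not vanish, so it fails the counit condition in Sweedler's definition of coideal, and you replace it by $J=D'\cap\ker\epsilon$, checking cleanly (via the identification of $\ker(\epsilon\otimes\epsilon)$ inside $D'\otimes D'$ after splitting off a line $Eu$) that $J$ is a bona fide coideal of finite codimension contained in $\ker\lambda$. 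The paper's proof takes $J=\bigoplus_{\rho\in\mathcal R}\OO_{\rho,E}$ and asserts it is a coideal, which is not literally the case under the definition in \cite{book_sweedler1969}; in practice this does not affect the Hecke algebra construction, since what is used downstream is precisely the annihilator of such a complementary sum of simple subcoalgebras, but your version is the technically accurate statement of (c). Your reduction of (b)$\Rightarrow$(c) to (a)$\Rightarrow$(c) via the (involutive, since $\OO_E(K)$ is commutative) antipode is also correct, and makes explicit what the paper disposes of with the phrase that it is ``proved along the same lines.''
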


\begin{proof}
This is an easy consequence of the decomposition \eqref{eq:ocodecomposition}. Clerly (c) implies (a) and (b). We show that (a) implies (c). The implication (b) to (c) is proved along the same lines.

Let $I$ be a left coideal of finite codimension. We show that it contains a coideal $J$ of finite codimension. We claim that by the decomposition \eqref{eq:ocodecomposition}, $I$ is the direct sum
\begin{equation}
I\;=\;\bigoplus_{\rho} \left(\OO_{\rho,E}\cap I\right).
\label{eq:idecomposition}
\end{equation}
Indeed, $I$ is a left $\OO_E(K)$-comodule, and decomposes as such into simple comodules
$$
I\;=\;\bigoplus_{\tau}I_\tau.
$$
Each $I_\tau$ is finite-dimensional and for each index $\tau$ there is an index $\rho(\tau)$ with the property that the $\OO_E(K)$-comodule structure on $I_\tau$ is given by the composition
\begin{equation}
I_\tau\to I_\tau\otimes\OO_{\rho(\tau)}\to I_\tau\otimes\OO_E(K).
\label{eq:itaustructure}
\end{equation}
Now since $\OO_{\rho,E}$ are subcoalgebras of $\OO_E(K)$, the coalgebra structure map $\OO_E(K)\to\OO_E(K)\otimes\OO_E(K)$ respects the direct sum decomposition \eqref{eq:ocodecomposition}, and hence \eqref{eq:itaustructure} implies
$$
I_\tau\;\subseteq\;\OO_{\rho(\tau)}.
$$
This proves \eqref{eq:idecomposition}.

Since $I$ is of finite codimension, there is an indexing set $\mathcal R$, which contains all indices $\rho$ except possibly finitely many, with the property that
$$
\forall\rho\in \mathcal R:\quad\OO_{\rho,E}\cap I\;=\;\OO_{\rho,E}.
$$
Then the sum
$$
J\;:=\;\bigoplus_{\rho\in \mathcal R} \OO_{\rho,E}
$$
is a coideal of finite codimension in $\OO_E(K)$ and is contained in $I$.
\end{proof}

Proposition \ref{prop:leftrightideals} is the analogue of the well known fact (cf.\ \cite{book_knappvogan1995}), that for a distribution on a compact Lie group $'K'$ the notions of being left-, right- or bi-$'K'$-finite are all equivalent.

Therefore we all can element $\lambda\in\OO_E(K)^*$ left- or right- or bi-$K$-finite or simply {\em $K$-finite} if it satisfies one of the equivalent conditions in Proposition \ref{prop:leftrightideals}.

We define
$$
R_E(K)\;:=\;\{
\lambda\in\OO_E(K)^*\mid
\lambda\;\text{is $K$-finite}\}.
$$
Then $R_E(K)$ is the maximal rational left-$\OO_E(K)^*$-submodule of $\OO_E(K)^*$, and agrees with the maximal rational right-$\OO_E(K)^*$-submodule. Therefore $R_E(K)$ acquires natural rational left and right $K$-actions.

\subsubsection{Approximate identities}

Each finite-dimesional coalgebra $C$ which occurs as a coalgebra quotient of $\OO_E(K)$ may be naturally identified with a finite indexing set $\mathcal R_C$ with the property that
$$
C\;\cong\;\bigoplus_{\rho\in\mathcal R_C}\OO_{\rho,E},
$$
via the natural inclusion of the right hand side into $\OO_E(K)$, composed with the projection onto $C$. Then to $C$ correponds a subalgebra $R_E(K)_C\subseteq\O_E(K)^*$.

\begin{proposition}\label{prop:finitehecke}
For any coalgebra quotient $C$ of $\OO_E(K)$ we have $R_E(K)_C\subseteq R_E(K)$ and $R_E(K)$ is the inductive limit of all $R_E(K)_C$ in the category of associative $E$-algebras.
\end{proposition}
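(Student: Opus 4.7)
The plan is to realize each $R_E(K)_C$ as the image inside $\OO_E(K)^*$ of the linear dual of a finite-dimensional coalgebra quotient, and then to package these subalgebras into a filtered directed system whose union is set-theoretically $R_E(K)$. First, I would observe that if $q_C:\OO_E(K)\to C$ is a finite-dimensional coalgebra quotient, then the $E$-linear transpose $q_C^*: C^*\hookrightarrow\OO_E(K)^*$ is injective, an algebra map for the convolution products, and identifies $C^*$ with the annihilator of $\ker q_C$ in $\OO_E(K)^*$; this annihilator is by construction $R_E(K)_C$. Since $C$ is finite-dimensional, $\ker q_C$ is a coideal of finite codimension in $\OO_E(K)$, so Proposition \ref{prop:leftrightideals} immediately gives $R_E(K)_C\subseteq R_E(K)$.

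Next, I would show $R_E(K)=\bigcup_C R_E(K)_C$. For $\lambda\in R_E(K)$ condition (c) of Proposition \ref{prop:leftrightideals} produces a coideal $J\subseteq\ker\lambda$ of finite codimension; the quotient $C:=\OO_E(K)/J$ is then a finite-dimensional coalgebra quotient through which $\lambda$ factors, so $\lambda\in R_E(K)_C$. Combined with the previous step this yields the equality of the two sides as subspaces of $\OO_E(K)^*$.

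Third, I would exhibit the directedness. Using the proof of Proposition \ref{prop:leftrightideals}, every coideal of finite codimension is (up to renaming) of the form $\bigoplus_{\rho\notin\mathcal{R}}\OO_{\rho,E}$ for some cofinite set of isomorphism classes $\rho$, so finite-dimensional coalgebra quotients are parametrized by finite subsets $\mathcal{R}_C$ of $\widehat{K}_E$. Given two such quotients $C_1,C_2$, the subset $\mathcal{R}_{C_1}\cup\mathcal{R}_{C_2}$ defines a finite-dimensional quotient $C$ surjecting onto both, and dually $R_E(K)_{C_i}\subseteq R_E(K)_C$. A short verification, exploiting the fact that the comultiplication respects the decomposition \eqref{eq:ocodecomposition}, shows that the convolution products computed in each $R_E(K)_C$ agree with those in the ambient algebra $\OO_E(K)^*$, so the inclusions $R_E(K)_{C_1}\hookrightarrow R_E(K)_C$ are algebra morphisms.

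Finally, the colimit property in the category of associative $E$-algebras follows formally from these two facts: the system is filtered, and $R_E(K)$ is a directed union of subalgebras isomorphic to the $R_E(K)_C$. Any compatible family of $E$-algebra maps $R_E(K)_C\to A$ then extends to a unique $E$-algebra map $R_E(K)\to A$. The only step that requires any attention is the compatibility of the convolution product with the direct sum decomposition \eqref{eq:ocodecomposition}; everything else is a routine translation between coalgebra quotients, their duals, and the $K$-finiteness criterion furnished by Proposition \ref{prop:leftrightideals}.
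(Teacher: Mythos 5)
Your proposal is correct and follows essentially the same route as the paper: identify $R_E(K)_C$ with the annihilator of the coideal $J_C=\ker(\OO_E(K)\to C)=\bigoplus_{\rho\notin\mathcal R_C}\OO_{\rho,E}$, use Proposition~\ref{prop:leftrightideals} to get the inclusion $R_E(K)_C\subseteq R_E(K)$, and use condition~(c) of that proposition to place any $\lambda\in R_E(K)$ inside some $R_E(K)_C$. You spell out the directedness and the universal property of the colimit, which the paper leaves implicit by simply calling $R_E(K)$ the union, but the substance is the same.
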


\begin{proof}
With the notation above, we define the ideal
$$
J_C\;:=\;\ker(\OO_E(K)\to C)\;=\;\bigoplus_{\rho\not\in\mathcal R_C}\OO_{\rho,E}.
$$
It is of finite codimension and we have
$$
R_E(K)_C\;=\;\{\lambda\in\OO_E(K)^*\mid J_C\subseteq\ker\lambda\}.
$$
Therefore $R_E(K)_C\subseteq R_E(K)$ and $R_E(K)$ is the union of all $R_E(K)_C$, since every ideal $J\subseteq\OO_E(K)$ of finite codimension is of the form $J_C$ for some finite-dimensional coalgebra quotient $C$.
\end{proof}

\begin{corollary}\label{cor:heckealgebrastructure}
The algebra structure on $\OO_E(K)^*$ induces on $R_E(K)$ the structure of associative algebra over $E$ with approximate identity.
\end{corollary}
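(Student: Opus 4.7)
My plan is to leverage Proposition \ref{prop:finitehecke} to reduce the corollary to the unital algebra structure on the duals of finite-dimensional coalgebras. First I would observe that for each finite-dimensional coalgebra quotient $C$ of $\OO_E(K)$, the subspace $R_E(K)_C\subseteq\OO_E(K)^*$ is by definition the annihilator of the ideal $J_C=\ker(\OO_E(K)\to C)$ and hence canonically identifies with $C^*$ as an $E$-vector space. Since $J_C$ is a two-sided coideal---equivalently $\Delta(J_C)\subseteq J_C\otimes\OO_E(K)+\OO_E(K)\otimes J_C$---the convolution product on $\OO_E(K)^*$ preserves this annihilator, so $R_E(K)_C$ is actually a subalgebra of $\OO_E(K)^*$, and under the identification with $C^*$ its product coincides with the one dual to the comultiplication of $C$.

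Because $C$ is a finite-dimensional coalgebra, $C^*$ is a finite-dimensional associative unital $E$-algebra whose unit is the counit $\epsilon_C$ of $C$. Viewed inside $\OO_E(K)^*$, this unit is the functional which equals the restriction of the counit of $\OO_E(K)$ on the direct summand $\bigoplus_{\rho\in\mathcal R_C}\OO_{\rho,E}$ and vanishes on the complementary summand. Combining this with Proposition \ref{prop:finitehecke}, which exhibits $R_E(K)=\bigcup_C R_E(K)_C$ as a directed union of subalgebras, shows that $R_E(K)$ is itself an associative $E$-algebra, inheriting its product from $\OO_E(K)^*$; associativity is automatic from that of the ambient algebra.

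For the approximate identity I would take the net $\{\epsilon_C\}$ indexed by the directed system of finite-dimensional coalgebra quotients of $\OO_E(K)$, partially ordered so that $C'\succeq C$ whenever the canonical projection of $\OO_E(K)$ onto $C$ factors through that onto $C'$, equivalently $R_E(K)_C\subseteq R_E(K)_{C'}$. For any $\lambda\in R_E(K)$, Proposition \ref{prop:finitehecke} supplies some $C$ with $\lambda\in R_E(K)_C$, and then $\epsilon_C\cdot\lambda=\lambda=\lambda\cdot\epsilon_C$ inside $\OO_E(K)^*$ because $\epsilon_C$ is the identity of the subalgebra $R_E(K)_C$. This is the required approximate identity property.

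The only point that genuinely needs verification is the stability of $R_E(K)_C$ under convolution, which is the main---though very mild---obstacle. It is immediate from the coideal property of $J_C$: for $x\in J_C$ the element $\Delta(x)$ lies in $J_C\otimes\OO_E(K)+\OO_E(K)\otimes J_C$, so $(\lambda\cdot\mu)(x)=(\lambda\otimes\mu)(\Delta(x))=0$ whenever both $\lambda,\mu$ annihilate $J_C$. Everything else is routine bookkeeping with the decomposition \eqref{eq:ocodecomposition}.
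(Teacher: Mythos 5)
Your proof is correct and follows the same route as the paper: use Proposition~\ref{prop:finitehecke} to write $R_E(K)$ as the directed union of the subalgebras $R_E(K)_C\cong C^*$, observe each $R_E(K)_C$ is stable under convolution (since $\Delta(J_C)\subseteq J_C\otimes\OO_E(K)+\OO_E(K)\otimes J_C$, indeed $\Delta(J_C)\subseteq J_C\otimes J_C$ because $J_C$ is a direct sum of subcoalgebras), and take the units $1_C=\epsilon_C$ of these finite pieces as the approximate identity. The paper's proof is terser and treats the stability of $R_E(K)_C$ under multiplication as evident; you spell out that verification, which is the only substantive step, so the two arguments coincide.
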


\begin{proof}
As the algebra structure on $\OO_E(K)^*$ is compatible with the algebra structure on each $R_E(K)_C$ for $C$ as in Proposition \ref{prop:finitehecke}, the restriction of the algebra multiplication on $\OO_E(K)^*$ to $R_E(K)$ has image in $R_E(K)$, since the latter is the union of the algebras $R_E(K)_C$. Furthermore the identities in $R_E(K)_C$ form an approximate identity in $R_E(K)$.
\end{proof}

We call a unit $1_C:=1_{R_E(K)_C}\in R_E(K)$ an {\em elementary idempotent}. We remark that
\begin{equation}
R_E(K)_C\;=\;1_CR_E(K)1_C.
\label{eq:rc}
\end{equation}
An $E$-vector space $V$, together with a homomorphism of associative algebras $\rho:R_E(K)\to\End_E(V)$ is called an approximately unital left $R_E(K)$-module, if for every finite-dimensional subspace $W\subseteq V$ there is an elementary idempotent $1_C\in R_E(K)$ with the property $1_CW=W$. Approximately unital right $R_E(K)$-module structures on $V$ are defined similarly.

\subsubsection{Approximately unital modules}

For a general, not necessarily approximately unital, left $R_E(K)$-module structure $\rho:R_E(K)\to\End_E(V)$ we define the subspace of \lq{}$K$-finite vectors\rq{} as
\begin{equation}
V_{K}\;:=\;\varinjlim_C 1_C V\;\subseteq\;V.
\label{eq:kfinitev}
\end{equation}
This is the maximal approximately unital $R_E(K)$-submodule of $V$ and $V$ is an approximately unital $R_E(K)$-module if and only if $V_{K}=V$. As in Proposition 1.55 of \cite{book_knappvogan1995} we see that the functor $V\mapsto V_{K}$ is exact.

\begin{theorem}\label{thm:heckemodulesfork}
The category of rational left (resp.\ right) $K$-modules over $E$ is equivalent to the category of approximately unital left (resp.\ right) $R_E(K)$-modules.
\end{theorem}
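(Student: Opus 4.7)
The plan is to exhibit mutually quasi-inverse functors between the two categories, reducing the statement to the classical Sweedler duality between comodules over a coalgebra and rational modules over its dual algebra, adapted to the approximately unital setting.

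First I would construct the functor from rational $K$-modules to approximately unital $R_E(K)$-modules. Given a rational representation $(V,\rho)$, which by definition is a right $\OO_E(K)$-comodule $\rho\colon V\to V\otimes\OO_E(K)$, I define
$$
\lambda\cdot v \;:=\; (\id_V\otimes\lambda)\bigl(\rho(v)\bigr), \qquad \lambda\in R_E(K),\ v\in V.
$$
Since $V$ decomposes into finite-dimensional $K$-stable subspaces, for any $v\in V$ one can find a finite-dimensional coalgebra quotient $C$ of $\OO_E(K)$ such that $\rho(v)\in V\otimes C$, using the isotypic decomposition induced by \eqref{eq:ocodecomposition}. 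Then $1_C\cdot v=v$, so the action is approximately unital. Functoriality on morphisms is immediate since comodule maps commute with evaluation.

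Next I would construct the reverse functor. Let $V$ be an approximately unital $R_E(K)$-module. For each finite-dimensional coalgebra quotient $C$ of $\OO_E(K)$, the subspace $1_CV$ is a unital module over the finite-dimensional algebra $R_E(K)_C$, which by \eqref{eq:rc} and Proposition \ref{prop:finitehecke} is canonically identified with the linear dual of the finite-dimensional subcoalgebra $\bigoplus_{\rho\in\mathcal R_C}\OO_{\rho,E}\subseteq\OO_E(K)$. The standard finite-dimensional Sweedler duality then produces a $C$-comodule structure
$$
\rho_C\colon\quad 1_CV\;\to\;1_CV\otimes C\;\hookrightarrow\;1_CV\otimes\OO_E(K).
$$
Taking the colimit \eqref{eq:kfinitev} as $C$ varies over all finite-dimensional coalgebra quotients assembles these into a map $\rho\colon V\to V\otimes\OO_E(K)$.

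The main obstacle is verifying the coherence of the $\rho_C$ under inclusions $C\hookrightarrow C'$, so that the direct limit actually yields a single well-defined comodule structure satisfying coassociativity and counitality. This is where the uniqueness of the decomposition \eqref{eq:ocodecomposition} is essential: if $1_CV\subseteq 1_{C'}V$, then the action of $R_E(K)_{C'}$ on $1_CV$ factors through the quotient $R_E(K)_{C'}\to R_E(K)_C$ dual to the inclusion $C\hookrightarrow C'$, and so $\rho_{C'}|_{1_CV}=\rho_C$ viewed in $V\otimes C'$. Once this compatibility is established, coassociativity and counitality follow on each $1_CV$ from the finite-dimensional duality, hence on $V$ by passage to the limit. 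Finally, checking that the two functors are mutually inverse is a direct computation on $K$-isotypic components in each direction, using the identification of $R_E(K)_{\{\rho\}}$ with $\OO_{\rho,E}^*$ for each irreducible rational representation $\rho$ of $K$ over $E$; naturality is automatic from the construction.
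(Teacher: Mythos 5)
Your proposal is correct and follows essentially the same route as the paper: restrict the natural $\OO_E(K)^*$-action to $R_E(K)$ and use the cosemisimple decomposition \eqref{eq:ocodecomposition} to verify approximate unitality, then reverse the process by assembling finite-dimensional Sweedler duality on the unital pieces ($1_CV$ in your version, the submodules $V_C$ generated by a finite-dimensional $W$ in the paper's) into a global comodule structure. The compatibility of the local comodule structures as $C$ grows, which you flag explicitly, is the same point the paper addresses by noting independence of the choice of $C$.
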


\begin{proof}
Let us show that there are mutually inverse functors between the categories in question.

On the one hand, each right $\OO_E(K)$-comodule $V$ gives naturally rise to a rational $\OO_E(K)^*$-module structure on $V$, and by restriction we obtain an associative action of $R_E(K)$ on $V$. We need to show that this action is approximately unital.

We let $V\to V\otimes\OO_E(K)$ denote the comodule structure. Consider for any finite-dimensional subspace $W\subseteq V$. Then $\phi(W)\subseteq V\otimes\OO_E(K)$ is finite-dimensional and we know that coassociativity implies that there is a finite-dimensional cosubalgebra $C\subseteq\OO_E(K)$ and a subspace $V_C\subseteq V$ containing $W$ with the property that the restriction of $\phi$ to $V_C$ induces a $C$-comodule structure
$$
V_C\to V_C\otimes C.
$$
Then $V_C$ is a left $R_E(K)_C$-module and therefore $1_C W=W$. This show that $V$ is indeed an approximately unital $R_E(K)$-module.

Assume on the other hand, that $V$ is an approximately unital left $R_E(K)$-module. Let us construct a right $\OO_E(K)$-comodule structure on $V$. Let again $W\subseteq V$ denote an arbitrary finite-dimensional subspace. Then we find a finite-dimensional subcoalgebra $C\subseteq\OO_E(K)$, such that for the associated elementary idempotent $1_C$ we have $1_C W=W$. Consider the $R_E(K)_C$-module $V_C$ generated by $W$. $V_C$ naturally inherits a right $C$-comodule structure
$$
V_C\to V_C\otimes C.
$$
Then the image of $W$ under the composition 
$$
V_C\to V_C\otimes C\;\to\;V\otimes\OO_E(K)
$$
is independent of the choice of $C$, and therefore this construction determines a unique $\OO_E(K)$-comodule structure on $V$.

It is easy to check that the two given constructions are inverses to each other.

The proof for $R_E(K)$-right modules and left $\OO_E(K)$-comodules proceeds mutatis mutandis. This proves the theorem.
\end{proof}

The proof of Theorem \ref{thm:heckemodulesfork} also shows that $V_{K}$ agrees with the maximal rational $K$-submodule of $V$.

For later use we introduce the following notation. If $V$ is a finite-dimensional rational $K$-module, then there is a unique \lq{}minimal\rq{} approximate identity $1_V\in R_E(K)$ with the property that $1_VV=V$. It may be constructed explicitly as follows. For each irreducible rational $K$-submodule $Z$ over $E$ occuring in $V$ there is a unique cosimple cosubalgebra $C_Z\subseteq\OO_E(K)$ which gives rise to an approximate identity $1_{C_Z}\in R_E(K)$ with the property that $1_{C_Z}$ acts as identity on $Z$ and as zero on any irreducible $Z'$ which is not isomorphic to $Z$. Then $1_V$ is the finite sum of the $1_{C_Z}$, where $Z$ runs over representatives of the isomorphism classes of $E$-rational $K$-types occuring in $V$. If $V=0$ we adopt the convention that $1_V=0$.

\subsection{Hecke algebras for general pairs}

Given the results of the previous section, the definition of Hecke algebras for general pairs $(\liea,B)$ over $E$ is straightforward. As the universal enveloping algebra $U_E(\liea)$ over $E$ is a rational left and right $B$-module, it is naturally an approximately unital left and right $R_E(B)$-module. Therefore we may define the Hecke algebra as an appropriate quotient of the smash product
$$
R_E(B)\,\#\,U_E(\liea)\;:=\;R_E(B)\otimes_E U_E(\liea).
$$
The appropriate structure of associative algebra with approximate identities on the smash product for weak pairs over $\CC$ is discussed in detail in section 5 of chapter I of \cite{book_knappvogan1995}. The same construction remains valid in our situation if the roles of $C(K)$ and $\mathcal A$ are played by $\OO_E(K)$ and $U_E(\liea)$ respectively. As in Lemma 1.113 of section 6 of loc.\ cit.\ we define a two-sided ideal $I_{\lieb}$ inside the smash product and obtain the Hecke algebra for the pair $(\liea,B)$ as the factor ring
$$
R_E(\liea,B)\;:=\;\left(R_E(B)\,\#\,U_E(\liea)\right)/I_{\lieb}.
$$
As a vector space we have $R_E(\liea,B)=U_E(\liea)\otimes_{U_E(\lieb)}R_E(B)$.

Then $R_E(\liea,B)$ is again an associative algebra with approximate identity. The aproximate units are again numbered by finite-dimensional subcoalgebras $C$ of $R_E(B)$ and we denote them simply $1_C$, and set
$$
R_E(\liea,B)_C\;:=\;1_C\cdot R_E(\liea,B)\cdot 1_C.
$$
This is an $E$-algebra with unit $1_C$, and
$$
R_E(\liea,B)\;=\;\varinjlim_C R_E(\liea,B)_C
$$
in the category of associative $E$-algebras.

Then Theorem \ref{thm:heckemodulesfork} naturally generalizes and we obtain
\begin{theorem}\label{thm:heckemodulesforgk}
The category $\mathcal C(\liea,B)_E$ of $(\liea,B)$-modules over $E$ is equivalent to the category of approximately unital left $R_E(\liea,B)$-modules.
\end{theorem}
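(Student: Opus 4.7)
The plan is to bootstrap the equivalence of Theorem \ref{thm:heckemodulesfork} from the group variable $B$ alone to the full pair $(\liea,B)$, by combining the $R_E(B)$-module picture with the smash-product construction of $R_E(\liea,B)$.

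First, starting from an $(\liea,B)_E$-module $X$, I would produce a left $R_E(\liea,B)$-action as follows. By Theorem \ref{thm:heckemodulesfork}, the rational $B$-structure on $X$ is the same datum as an approximately unital left $R_E(B)$-action. The $\liea$-action on $X$ extends uniquely to a left $U_E(\liea)$-action, and the assumed compatibility between the rational $B$-action on $\liea$ and the two actions on $X$ is exactly what is needed to assemble these into a left $R_E(B)\,\#\,U_E(\liea)$-action (this is the universal property of the smash product, as explained in section 5 of Chapter I of \cite{book_knappvogan1995}). The infinitesimal compatibility, namely that the derivative of the $B$-action on $X$ coincides with the restriction of the $\liea$-action along $\iota_E : \Lie(B_E) \to \liea_E$, then forces the two-sided ideal $I_\lieb$ to act by zero, so the action descends to $R_E(\liea,B)$. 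Approximate unitality is inherited from the $R_E(B)$-side, since each finite-dimensional subspace of $X$ is contained in a finite sum of $B$-isotypic components.

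Conversely, given an approximately unital left $R_E(\liea,B)$-module $X$, restriction along the natural map $R_E(B) \to R_E(\liea,B)$ coming from $\lambda \mapsto \lambda \otimes 1$ gives an approximately unital $R_E(B)$-module, hence by Theorem \ref{thm:heckemodulesfork} a rational $B$-module structure on $X$. For the $\liea$-action: given $x \in \liea$ and $v \in X$, I choose an elementary idempotent $1_C$ with $1_C v = v$ and put $x\cdot v := (x \otimes 1_C) \cdot v$, where the class of $x \otimes 1_C$ is computed inside $R_E(\liea,B)$. Independence of the choice of $C$, $E$-bilinearity, and the bracket relations of $\liea$ follow directly from the defining relations of the smash product together with the fact that $x\otimes 1_{C'} - x\otimes 1_C$ acts by zero on any vector fixed by both idempotents. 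Compatibility of the $B$- and $\liea$-actions is encoded in the quotient by $I_\lieb$.

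The main technical obstacle is the verification that $I_\lieb$ really cuts out precisely those smash-product modules whose $\lieb$-action arising from $U_E(\liea)$ agrees with the derivative of the $B$-action; once this is established, the two constructions above are easily seen to be mutually quasi-inverse. Fortunately, the verification is purely algebraic and $E$-linear, so the detailed argument given in Lemma 1.113 and the surrounding discussion of \cite{book_knappvogan1995} for the complex case transposes verbatim to an arbitrary field $E$ of characteristic $0$. The only additional input one needs beyond loc.\ cit.\ is that the decomposition \eqref{eq:ocodecomposition} and the structural results of section \ref{sec:reductivehecke} remain valid over $E$, which we have already established.
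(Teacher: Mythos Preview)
Your proposal is correct and follows essentially the same route as the paper: both reduce to the classical argument of Theorem~1.117 in \cite{book_knappvogan1995}, noting that everything there is purely $E$-linear and that the only new ingredient over general~$E$, namely the equivalence of rational $B$-modules with approximately unital $R_E(B)$-modules (Theorem~\ref{thm:heckemodulesfork}) and the cosemisimple decomposition \eqref{eq:ocodecomposition}, has already been supplied. The paper simply writes ``mutatis mutandis'' and cites loc.\ cit., whereas you have spelled out the two mutually inverse functors and isolated the role of $I_\lieb$; but the content is the same.
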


\begin{proof}
The proof proceeds mutatis mutandis as the classical proof over $\CC$, cf.\ proof of Theorem 1.117 of loc.\ cit.
\end{proof}

In the sequel we freely identify the categories of $(\liea,B)$-modules and approximately unital left $R_E(\liea,B)$-modules.

We remark that the antipodes of the Hopf algebras $U_E(\liea)$ and $\OO_E(B)$ induce a \lq{}transpose map\rq{} $(\cdot)^t$ on $R_E(\liea,B)$, which allows us to turn left modules into right modules and vice versa (cf.\ Proposition 1.122 of loc.\ cit.).

\subsection{Zuckerman and Bernstein functors}

In this section we define general induction and production functors for maps of pairs over $E$. The treatment follows chapter II of the monograph \cite{book_knappvogan1995} closely. All arguments there carry over to our situation without modification. Therefore we content us to give references to the statements in loc.\ cit., instead of reproducing essentially identical proofs.

The notion of a map of pairs $\iota:(\liea,B)\to (\lieb,H)$ over $E$ is defined as in the complex case. I.e.\ $\iota$ consists of a map of Lie algebras $\liea\to\lieb$ and a map of reductive groups $B\to H$ which are compatible in the obvious way. Then the natural left and right $B$-actions on $H$ induce left and right $R_E(B)$-actions on $R_E(H)$. Similarly we obtain actions of $U_E(\liea)$ on $U_E(\liec)$ and the compatibility of all these actions allows us to descend these actions to left and right actions of $R_E(\liea,B)$ on $R_E(\liec,H)$.

\subsubsection{Passage to $K$-finite vectors}

As in \eqref{eq:kfinitev} we let $(\cdot)_B$ denote the subspace of $B$-finite vectors in the approximate unital sense: If $V$ is a module for the asssociative algebra $R_E(B)$, then
$$
V_B\;=\;\{m\in M\mid \exists C: 1_C m = m\}.
$$
If $V$ is a non-unital $R_E(\liea,B)$-module, then $V_B$ is an approximately unital $R_E(\liea,B)$-module and this operation is an exact functor from the category of non-unital $R_E(\liea,B)$-modules to approximately unital $R_E(\liea,B)$-modules. We remark that with this notation
$$
V^\vee\;=\;\Hom_E(V,E)_B,
$$
for any $(\liea,B)$-module $V$ over $E$.

\subsubsection{The Bernstein functor}

Following section 1 of chapter II in \cite{book_knappvogan1995}, we define the functor
$$
P_{\iota,E}\;:=\;R_E(\liec,H)\otimes_{R_E(\liea,B)}(-)
$$
as a functor from $(\liea,B)$-modules to $(\liec,H)$-modules. If there is no risk of confusion, we occasionally drop the index $E$ from the notation.

Then $P_{\iota,E}$ is right exact and left adjoint to the \lq{}completed\rq{} forgetful functor $\mathcal F^\vee_{\iota,E}:\mathcal C(\liec,H)_E\to\mathcal C(\liea,B)_E$, defined as
$$
\mathcal F^\vee_{\iota,E}(-)\;:=\;\Hom_{R_E(\lieb,H)}(R_E(\lieb,H),-)_{B},
$$
(cf.\ Proposition 2.34 of loc.\ cit.). Since $\mathcal F^\vee_{\iota,E}$ is exact, $P_{\iota,E}$ sends projectives to projectives.

\subsubsection{The Zuckerman functor}

Dually to $P_{\iota,E}$ we define the left exact functor
$$
I_{\iota,E}\;:=\;\Hom_{R_E(\liea,B)}(R_E(\liec,H),-)_{H},
$$
which is a right adjoint to the standard forgetful functor $\mathcal F_{\iota,E}:\mathcal C(\liec,H)_E\to\mathcal C(\liea,B)_E$ along $\iota$ (cf.\ Proposition 2.21 of loc.\ cit.). The latter is again exact and therefore $I_{\iota,E}$ preserves injectives.

We remark that we have a monomorphism (Proposition 2.33 of loc.\ cit.),
$$
\mathcal F_{\iota,E}\to \mathcal F^\vee_{\iota,E}.
$$

\subsubsection{Standard resolutions}

The standard (Koszul) resolution in $\mathcal C(\liea,B)_E$ is given by
\begin{equation}
0\to X_{\dim{\liea/\lieb},E}\to\cdots \to X_{1,E}\to X_{0,E}\to E\to 0,
\label{eq:koszul}
\end{equation}
with
$$
X_{q,E}\;:=\;P_{(\lieb,B)\to(\liea,B)}\left(\bigwedge^q(\liea/\lieb)\right)\;=\;R_E(\liea,B)\otimes_{R_E(B)}\bigwedge^q(\liea/\lieb).
$$
The differentials and the augmentation map in \eqref{eq:koszul} are given as in (2.121c) and (2.121b) in section 7 of chapter II of loc.\ cit.

By applying the exact functors $(-)\otimes_E V$ and $\Hom_E(-,V)$ we deduce from \eqref{eq:koszul} standard projective and standard injective resolutions of $V$ in $\mathcal C(\liea,B)$.

For any map of pairs $\iota:(\liea,B)\to (\liec,H)$, we obtain the explicit complex
$$
C_q(V)_E\;:=\;R_E(\liec,H)\otimes_{R_E(\liec,\iota(B))} 
\bigwedge^q\left(\lieh/\iota(\lieb)\right)^*\otimes U_E(\liec)\otimes_{U_E(\liea)}V,
$$
whose homology computes $L^qP_{\iota,E}(V)$. If we are only interested in the $H$-action, the description of $C_q(V)_E$ simplifies to
$$
C_q(V)_E|_H\;=\;R_E(H)\otimes_{R_E(\iota(B))} 
\bigwedge^q\left(\lieh/\iota(\lieb)\right)^*\otimes U_E(\liec)\otimes_{U_E(\liea)}V.
$$
In particular $L^qP_{\iota,E}(V)=0$ whenever $q>\dim(\lieh/\iota(\lieb))$.

\subsection{Base change properties}

In this section we study the behavior of the preceeding constructions under base change. This plays a fundamental role in our applications.

\begin{proposition}\label{prop:heckebasechange}
For any map of fields $\sigma E\to F$ in characteristic $0$ and any pair $(\liea,B)$ over $E$ we have natural isomorphisms
\begin{equation}
R_E(B)\otimes_{E,\sigma}F\;\cong\;R_F(B\times_{\Spec E,\sigma}\Spec F)
\label{eq:compactheckeiso}
\end{equation}
and
\begin{equation}
R_E(\liea,B)\otimes_{E,\sigma}F\;\cong\;R_F(\liea\otimes_{E,\sigma}F,B\times_{\Spec E,\sigma}F)
\label{eq:heckeiso}
\end{equation}
of algebras with approximate identities.
\end{proposition}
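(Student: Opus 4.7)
The plan is to prove \eqref{eq:compactheckeiso} first and then deduce \eqref{eq:heckeiso} by invoking the structural description of $R_E(\liea,B)$ as an explicit quotient of a smash product.

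For \eqref{eq:compactheckeiso}, I would start from the tautological Hopf algebra identification $\OO_F(B_F)=\OO_E(B)\otimes_{E,\sigma}F$, which follows from the definition $B_F:=B\times_{\Spec E,\sigma}\Spec F$. Applying base change to the cosemisimple decomposition \eqref{eq:ocodecomposition} yields
$$
\OO_F(B_F)\;=\;\bigoplus_\rho\left(\OO_{\rho,E}\otimes_{E,\sigma}F\right),
$$
a direct sum of finite-dimensional sub-coalgebras over $F$ (not necessarily simple over $F$, since simple $E$-coalgebras may split upon extension of scalars). Repeating the argument of Proposition \ref{prop:leftrightideals} in this enlarged decomposition, any finite-dimensional sub-coalgebra $C'\subseteq\OO_F(B_F)$ is contained in some $C\otimes_{E,\sigma}F$ with $C=\bigoplus_{\rho\in\mathcal R}\OO_{\rho,E}$ a finite-dimensional sub-coalgebra of $\OO_E(B)$ indexed by a finite $\mathcal R$. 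This cofinality statement ensures that the base-changed approximate units are cofinal among all approximate units of $R_F(B_F)$.

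For such a finite-dimensional $C$, Proposition \ref{prop:finitehecke} identifies $R_E(B)_C$ with the convolution algebra $C^*$, and base change of $E$-duals of finite-dimensional spaces gives
$$
R_E(B)_C\otimes_{E,\sigma}F\;=\;(C\otimes_{E,\sigma}F)^*\;=\;R_F(B_F)_{C\otimes_{E,\sigma}F},
$$
an isomorphism of $F$-algebras compatible with inclusions of elementary idempotents. Passing to the filtered colimit, which commutes with $-\otimes_{E,\sigma}F$, and using the cofinality established above, one obtains \eqref{eq:compactheckeiso} as an isomorphism of associative $F$-algebras with approximate identity.

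For \eqref{eq:heckeiso}, I would use the presentation $R_E(\liea,B)=\left(R_E(B)\,\#\,U_E(\liea)\right)/I_\lieb$ together with the standard identification $U_E(\liea)\otimes_{E,\sigma}F=U_F(\liea\otimes_{E,\sigma}F)$, which follows from the universal property of enveloping algebras. Since the smash product is, as a vector space, just the tensor product $R_E(B)\otimes_E U_E(\liea)$, combining this with \eqref{eq:compactheckeiso} gives the base change of the smash product. Exactness of $-\otimes_{E,\sigma}F$ lets us commute base change past the quotient; it only remains to verify that the two-sided ideal $I_\lieb$ base changes to $I_{\lieb\otimes_{E,\sigma}F}$, which holds because $I_\lieb$ is generated by relations expressing compatibility between the inclusion $\iota:\Lie(B)\hookrightarrow\liea$ and the adjoint action of $B$ on $\liea$ (cf.\ Lemma 1.113 of \cite{book_knappvogan1995}), and each such generator manifestly maps to its counterpart over $F$.

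The main obstacle is the cofinality step in the first part: since simple $E$-coalgebras need not remain simple upon base change to $F$, one cannot argue directly by matching up simple sub-coalgebras. The resolution is that the direct sum decomposition \eqref{eq:ocodecomposition} persists after base change (even though its summands may become reducible), and cosemisimplicity of $\OO_F(B_F)$, which follows from the reductivity of $B_F$ in characteristic $0$, allows one to reduce to this decomposition and enlarge any finite-dimensional $C'$ to a base-changed sub-coalgebra $C\otimes_{E,\sigma}F$.
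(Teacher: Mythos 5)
Your proposal is correct and follows essentially the same strategy as the paper: reduce \eqref{eq:heckeiso} to \eqref{eq:compactheckeiso} via the base-change-invariance of the smash-product construction, then prove \eqref{eq:compactheckeiso} by a cofinality argument on the cosemisimple decomposition (you phrase it with finite-dimensional subcoalgebras where the paper dually uses coideals of finite codimension, but the content is identical). The only places you go beyond the paper's write-up are small added details — the explicit identification $R_E(B)_C\cong C^*$ and the check that $I_\lieb$ base-changes to $I_{\lieb\otimes_{E,\sigma}F}$ — which the paper compresses into a single sentence.
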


\begin{proof}
It suffices to prove the existence of the isomorphism \eqref{eq:compactheckeiso}, as the existence of the isomorphism \eqref{eq:heckeiso} follows from the first by the invariance under base change of the construction of the Hecke algebras via smash products.

By our definition of the Hecke algebras for $B$ over $E$ and $F$, and since $\OO_E(B)\otimes_{E,\sigma}F\cong\OO_F(B\times_{\Spec E,\sigma}F)$ naturally, it suffices to show that every coideal $J$ in $\OO_F(B\times_{\Spec E,\sigma}F)$ contains the image under $\sigma$ of a coideal $J'$ in $\OO_E(B)$ of finite codimension, the other direction being obvious.

To see this, observe that in the decomposition \eqref{eq:ocodecomposition} over $E$, i.e.\
$$
\OO_E(B)\;=\;\bigoplus_\rho \OO_{\rho,E},
$$
each direct summand $\OO_{\rho,E}$ is finite-dimensional and decomposes, after application of the functor $-\otimes_{E,\sigma}F$ over $F$ into a finite sum of simple coalgebras $\OO_{\tau(\rho),F}$, and \eqref{eq:ocodecomposition} over $F$ then reads
$$
\OO_F(B)\;=\;\bigoplus_\rho\left(\bigoplus_{\tau(\rho)} \OO_{\tau(\rho),F}\right).
$$
Arguing as in the proof of Proposition \ref{prop:leftrightideals} we see with \eqref{eq:itaustructure} that
$$
J\;=\;\bigoplus_\rho\left(\bigoplus_{\tau(\rho)} (J\cap\OO_{\tau(\rho),F})\right),
$$
and for almost all indices $\rho$,
$$
(J\cap\OO_{\tau(\rho),F})\;=\;\OO_{\tau(\rho),F}.
$$
Therefore
$$
J'\;:=\;\sigma^{-1}(J\cap R_E(B)^\sigma)
$$
is a coideal in $\OO_E(B)$ satisfying
$$
(J'\cap\OO_{\rho,E})\;=\;\OO_{\rho,E}
$$
for all but finitely many indices $\rho$, and therefore is of finite codimension.
\end{proof}

\begin{corollary}\label{cor:forgetfulbasechange}
For any map of fields $\sigma: E\to F$ let $\iota_E:(\lieh,L)\to(\lieg,K)$ be a map of pairs over $E$. Consider the induced map
$$
\iota_F:(\lieh\otimes_{E,\sigma}F,L\times_{\Spec E,\sigma}\Spec F)\to(\lieg\otimes_{E,\sigma}F,K_{\Spec E,\sigma}F)
$$
of pairs over $F$. Then we have natural isomorphisms of functors
\begin{equation}
\mathcal F_{\iota_F}\circ (-\otimes_{E,\sigma}F)\;\cong\;
\mathcal (-\otimes_{E,\sigma}F)\circ \mathcal F_{\iota_E}
\label{eq:forgetfulbasechangeiso}
\end{equation}
and
\begin{equation}
\mathcal F_{\iota_F}^\vee\circ (-\otimes_{E,\sigma}F)\;\cong\;
\mathcal (-\otimes_{E,\sigma}F)\circ \mathcal F_{\iota_E}^\vee
\label{eq:forgetfulvbasechangeiso}
\end{equation}
\end{corollary}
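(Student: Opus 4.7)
The plan is to deduce both natural isomorphisms from the base change property of Hecke algebras established in Proposition \ref{prop:heckebasechange}, together with the module description afforded by Theorem \ref{thm:heckemodulesforgk}. Under this equivalence, both forgetful functors become purely algebraic operations built from the Hecke algebras $R_E(\lieh, L)$ and $R_E(\lieg, K)$ and from the algebra homomorphism $\iota_{*,E}: R_E(\lieh, L) \to R_E(\lieg, K)$ induced by $\iota_E$, so the statement is reduced to the question of how these two ingredients interact with extension of scalars along $\sigma$.

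For \eqref{eq:forgetfulbasechangeiso} the argument is essentially tautological. Under the equivalence of Theorem \ref{thm:heckemodulesforgk}, the standard forgetful functor $\mathcal F_{\iota_E}$ is restriction of scalars along $\iota_{*,E}$. Restriction of scalars commutes in the obvious way with extension of scalars along any field map, and Proposition \ref{prop:heckebasechange} provides compatible isomorphisms $R_E(\lieh, L) \otimes_{E,\sigma} F \cong R_F(\lieh_F, L_F)$ and $R_E(\lieg, K) \otimes_{E,\sigma} F \cong R_F(\lieg_F, K_F)$ intertwining $\iota_{*,E}$ and $\iota_{*,F}$. For any $V \in \mathcal C(\lieg,K)_E$ the two $R_F(\lieh_F, L_F)$-module structures on the underlying $F$-vector space $V \otimes_{E,\sigma} F$ produced by the two composite functors therefore coincide, and this coincidence is manifestly natural in $V$.

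For \eqref{eq:forgetfulvbasechangeiso} I would follow the same strategy, but now I must handle the passage to the $L$-finite part inside a Hom space. Since $\mathcal F_{\iota_E}^\vee$ is defined as $\Hom_{R_E(\lieh, L)}(R_E(\lieg, K), -)$ followed by $(-)_L$, the key point is that taking $L$-finite vectors commutes with $-\otimes_{E,\sigma} F$. This follows from the definition \eqref{eq:kfinitev} of $(-)_L$ as the filtered colimit $\varinjlim_C 1_C \cdot (-)$ over finite-dimensional subcoalgebras $C \subseteq \OO_E(L)$: by Proposition \ref{prop:heckebasechange} each elementary idempotent $1_C \in R_E(L)$ base changes to a finite sum of elementary idempotents in $R_F(L_F)$, these exhaust $R_F(L_F)$ as $C$ ranges over the coalgebras appearing in \eqref{eq:ocodecomposition}, and filtered colimits commute with the exact functor $-\otimes_{E,\sigma} F$. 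Restricted to each $L$-isotypic component of $\Hom_{R_E(\lieh, L)}(R_E(\lieg, K), V)$, the Hom becomes a finite-dimensional object over the finite-dimensional algebra $R_E(\lieh, L)_C$ of \eqref{eq:rc}, so Hom there commutes with base change along $\sigma$ by finite-dimensionality. Assembling these componentwise isomorphisms yields \eqref{eq:forgetfulvbasechangeiso}.

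The main obstacle lies precisely in the second part: Hom does not commute with base change in general, so one must exploit the approximately unital structure to decompose it into finite-dimensional pieces before applying $\sigma$. The essential bookkeeping has already been done in the proof of Proposition \ref{prop:heckebasechange}, where the decomposition \eqref{eq:ocodecomposition} and the subsequent analysis of finite-codimensional coideals is the same mechanism that now controls the behaviour of $(-)_L$ under extension of scalars, so no new ideas are required beyond careful tracking of the approximate identities under $\sigma$.
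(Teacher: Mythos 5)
Your argument for \eqref{eq:forgetfulbasechangeiso} is sound: $\mathcal F_{\iota}$ on the level of Hecke modules is simply restriction of the action along the bimodule structure, and Proposition \ref{prop:heckebasechange} gives compatible isomorphisms of the Hecke algebras and bimodules, so the two $R_F(\lieh_F,L_F)$-module structures on $V\otimes_{E,\sigma}F$ coincide. This is indeed the entire content of that half.

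The treatment of \eqref{eq:forgetfulvbasechangeiso}, however, contains a genuine error. You claim that ``restricted to each $L$-isotypic component, the Hom becomes a finite-dimensional object over the finite-dimensional algebra $R_E(\lieh,L)_C$ of \eqref{eq:rc}.'' This conflates two different algebras and is false on two counts. First, \eqref{eq:rc} concerns $R_E(L)_C=1_CR_E(L)1_C$, the compact Hecke algebra, which is indeed finite-dimensional; but $R_E(\lieh,L)_C=1_CR_E(\lieh,L)1_C$ is the full pair Hecke algebra, and it is \emph{not} finite-dimensional, since it contains (a quotient of) the universal enveloping algebra $U_E(\lieh)$. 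Second, the $L$-isotypic piece $1_C\Hom_{R_E(\lieg,K)}(R_E(\lieg,K),V)$ is identified with $\Hom_{R_E(\lieg,K)}\bigl(R_E(\lieg,K)\cdot 1_C, V\bigr)$, where $R_E(\lieg,K)\cdot 1_C$ denotes the right $L$-isotypic component of the Hecke algebra; this left $R_E(\lieg,K)$-module is also not finite-dimensional (it carries the full $U_E(\lieg)$-action), so there is no finiteness to appeal to. (Note also that you wrote $\Hom_{R_E(\lieh,L)}$ where the paper's definition calls for $\Hom_{R_E(\lieg,K)}$.)

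The fact your argument misses is that $R_E(\lieg,K)\cdot 1_C$ is nevertheless an approximately unital left $R_E(\lieg,K)$-module, i.e.\ an object of $\mathcal C(\lieg,K)_E$, so the base change isomorphism you need for this Hom-space is exactly the statement of Proposition 1.1 of \cite{januszewskipreprint} applied to $(\lieg,K)$-modules, together with the observation that $(R_E(\lieg,K)\cdot 1_C)\otimes_{E,\sigma}F=R_F(\lieg_F,K_F)\cdot 1_C$ and that the elementary idempotents $1_C$ coming from $E$-coalgebras give a cofinal family after base change. Alternatively, one may deduce \eqref{eq:forgetfulvbasechangeiso} from \eqref{eq:forgetfulbasechangeiso}, the base-change compatibility of $P_{\iota}$ (a tensor product, which commutes with $-\otimes_{E,\sigma}F$ automatically given Proposition \ref{prop:heckebasechange}), the adjunction $P_{\iota}\dashv\mathcal F^{\vee}_{\iota}$, and the base-change compatibility of Hom-spaces from \cite{januszewskipreprint}; this avoids unwinding the definition of $\mathcal F^{\vee}$ entirely. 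In either case, the crux is an appeal to the Hom base-change result from the companion paper rather than to any finite-dimensionality of the pieces.
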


The following Theorem is a generalization of Proposition 2.3 and Theorem 2.5 in \cite{januszewskipreprint}.
\begin{theorem}\label{thm:basechangeinduction}
With the notation from Corollary \ref{cor:forgetfulbasechange}, we have for any $(\lieh,L)$-module $V$ over $E$ and any degree $j$ natural isomorphisms
\begin{equation}
L_jP_{\iota_E}(V)\otimes_{E,\sigma}F\;\cong\;L_jP_{\iota_F}(V\otimes_{E,\sigma}F)
\label{eq:derivedbernsteiniso}
\end{equation}
and
\begin{equation}
R^jI_{\iota_E}(V)\otimes_{E,\sigma}F\;\cong\;R^jI_{\iota_F}(V\otimes_{E,\sigma}F)
\label{eq:derivedzuckermaniso}
\end{equation}
of $(\lieg\otimes_{E,\sigma}F,K\times_{\Spec E,\sigma}F)$-modules over $F$.
\end{theorem}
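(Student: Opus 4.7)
The plan is to reduce both isomorphisms to a base change statement for an explicit complex computing the respective derived functor, and then to use that $-\otimes_{E,\sigma}F$ is exact (since $F$ is flat over $E$) so that it commutes with taking (co)homology.

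For the Bernstein side, the paper has already constructed an explicit chain complex $C_\bullet(V)_E$ whose homology computes $L_\bullet P_{\iota_E}(V)$. Each term is built from Hecke algebras of $(\lieg,K)$ and $(\lieg,\iota(L))$, the universal enveloping algebras $U_E(\lieg)$ and $U_E(\lieh)$, the finite-dimensional exterior powers $\bigwedge^q(\liek/\iota(\liel))^*$, and the module $V$, all assembled by tensor products over the appropriate algebras. The key inputs are Proposition \ref{prop:heckebasechange}, which yields $R_E(\lieg,K)\otimes_{E,\sigma}F\cong R_F((\lieg,K)_F)$ and similarly for $R_E(\lieg,\iota(L))$, the evident compatibility $U_E(\lieh)\otimes_{E,\sigma}F\cong U_F(\lieh_F)$, and the general fact that iterated tensor products base change naturally. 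Combining these isomorphisms termwise gives an isomorphism of complexes
$$C_q(V)_E\otimes_{E,\sigma}F\;\cong\;C_q(V\otimes_{E,\sigma}F)_F$$
compatible with differentials, and since $-\otimes_{E,\sigma}F$ is exact, passing to $j$-th homology yields \eqref{eq:derivedbernsteiniso}.

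For the Zuckerman side I would set up an analogous explicit complex. Applying $\Hom_E(-,V)$ to the Koszul resolution \eqref{eq:koszul} produces an injective resolution of $V$ in $\mathcal C(\lieh,L)_E$, and applying the Zuckerman functor $I_{\iota_E}$ termwise yields a cohomological complex $D^\bullet(V)_E$ whose cohomology computes $R^jI_{\iota_E}(V)$, in line with section 7 of chapter II of \cite{book_knappvogan1995}. After passing to $K$-finite vectors the terms of $D^\bullet(V)_E$ are, as in the Bernstein case, built from Hecke algebras, enveloping algebras, finite-dimensional exterior powers, and $V$. Proposition \ref{prop:heckebasechange} together with the $\Hom$-base change statement
$$\Hom_{\lieh,L}(X_E,Y_E)\otimes_E F\;\cong\;\Hom_{\lieh,L}(X_F,Y_F)$$
(Proposition 1.1 of \cite{januszewskipreprint}) then produces an isomorphism $D^q(V)_E\otimes_{E,\sigma}F\cong D^q(V\otimes_{E,\sigma}F)_F$, and exactness of base change once again yields \eqref{eq:derivedzuckermaniso}.

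The main obstacle is the Zuckerman case. Naively, $\Hom$ does not commute with base change for arbitrary infinite-dimensional modules, so one cannot simply write down an abstract Hom-complex and base change. The remedy is to exploit the approximate-identity structure: restricting to $K$-finite vectors and decomposing along the elementary idempotents $1_C$ breaks each term into a direct sum indexed by finite-dimensional subcoalgebras of $\OO_E(K)$, on each of which only Hom-groups between finitely generated rational modules appear. These are precisely the modules to which Proposition 1.1 of \cite{januszewskipreprint} applies. Once the explicit complex is in this form, the Zuckerman argument runs in parallel to the Bernstein argument, and both statements follow from the exactness of $-\otimes_{E,\sigma}F$.
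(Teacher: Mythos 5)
Your proof is correct and follows what the paper itself flags as its \emph{alternative} route: namely, that the standard Koszul resolutions, and hence the explicit standard complexes $C_q(V)_E$ computing $L_\bullet P_{\iota_E}$ and their Hom-analogues computing $R^\bullet I_{\iota_E}$, are stable under $-\otimes_{E,\sigma}F$, after which exactness of extension of scalars passes the isomorphism to (co)homology. The paper's \emph{primary} route is shorter: it first settles $j=0$ via Proposition \ref{prop:heckebasechange} (or equivalently Corollary \ref{cor:forgetfulbasechange} plus adjointness) and then invokes the general Homological Base Change Theorem (Theorem 2.1 of \cite{januszewskipreprint}), which packages exactly the complex-stability argument you spell out by hand. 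The one place where your version adds substance is the Zuckerman side: you are right that a naive ``$\Hom$ commutes with base change'' step is false for infinite-dimensional modules, and your remedy --- restrict to $K$-finite vectors and decompose along the elementary idempotents $1_C$ so that each isotypic block is a $\Hom$ between objects to which Proposition 1.1 of \cite{januszewskipreprint} applies --- is precisely the mechanism hidden inside that proposition (which is stated for the $B$-finite $\Hom$ in $\mathcal C(\liea,B)_E$, not the raw $\Hom_E$). So your proof is essentially the paper's second proof, fleshed out, with the Zuckerman subtlety made explicit rather than delegated to the cited base-change theorem.
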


The isomorphisms in Theorem \ref{thm:basechangeinduction} also preserve the usual spectral sequences and K\"unneth isomorphisms.



\begin{proof}
In degree $j=0$ we know by \ref{prop:heckebasechange} and the definition of $P_{\iota_\bullet}$ and $I_{\iota_\bullet}$ that the statement is true (alternatively we may appeal to Corollary \ref{cor:forgetfulbasechange} and the adjointness relations). The statement for general degrees $j$ is a consequence of the general Homological Base Change Theorem (Theorem 2.1 in \cite{januszewskipreprint}). Alternatively one may argue that the standard Koszul resolutions are stable under base change and hence the standard complexes for computing the derived functors in question also commute with base change.
\end{proof}

\subsection{The algebraic Borel-Bott-Weil Theorem}

In this section we discuss the algebraic analogue of the Borel-Bott-Weil Theorem over non-algebraically closed fields $E$ of characteristic $0$. This will be useful in section \ref{sec:nonvanishingcriterion}.

Suppose $E$ fixed, and $K$ to be a reductive group over $E$ which is not necessarily connected. Assume a parabolic subalgebra $\lieq\subseteq\liek$ with Levi decomposition $\lieq=\liel+\lieu$ over $E$ is given, and assume furthermore that $\liel$ is the Lie algebra (of the connected component of the identity) of a reductive subgroup $L\subseteq K$ over $E$ which meets all connected components of $K$. Recall that we always assume connected components to be defined over $E$.

Choose an extension $F/E$ where all absolutely irreducible representations of $K$ are defined and where $K^0$ is split (the former doesn't imply the latter in general, $\Oo(2n)/\QQ$ is a counter example). We fix a Borel subalgebra $\lieb_F\subseteq\liek_F$ containing $\lieu$ and such that
$$
\lieb_F\;=\;(\lieb_F\cap\liel_F)+\lieu_F.
$$
Then $\lieb_F\cap\liel_F$ is a Borel in $\liel_F$ and we may fix a Cartan subalgebra $\liet_F\subseteq\lieb_F\cap\liel_F$, and obtain a corresponding Levi decomposition
$$
\lieb_F\cap\liel_F\;=\;\liet_F+\lien_F.
$$
Then if $Z$ is an irreducible $(\liel,L)$-module over $E$, which is not necessarily absolutely irreducible, the module $Z$ gives rise to a collection of $\lieb_F\cap\liel_F$-highest weights weights of $\liet_F$. Those are by definition the weights occuring in $H^0(\lien_F;Z_F)$.

We set $S_\lieq:=\dim\lieu$, and write
$$i:\quad(\lieq,L)\to(\liek,K)$$
for the canonical inclusions of pairs and
$$p:\quad(\lieq,L)\to(\liel,L)$$
for the canonical projections.

\begin{theorem}\label{thm:borelbottweil}
With the notation above, let $Z$ be an irreducible $(\liel,L)$-module over $E$ (not necessarily absolutely irreducible), whose $\lieb_F\cap\liel_F$-highest weights are $\lieb_F$-dominant. Then the $(\liek,K)$-modules
$$
L_{q}P_{i,E}(\mathcal F_{p}\left(Z\otimes \bigwedge^{S_\lieq}\lieu\right)^\vee),
$$
and
$$
R^{q}I_{i,E}(\mathcal F_{p}\left(Z\otimes \bigwedge^{S_\lieq}\lieu\right)),
$$
are non-zero if and only if $q=S_{\lieq}$. In that case they are both irreducible and characterized by the property that
$$
H_0(\lieu;L_{S_\lieq}P_{i,E}(\mathcal F_{p}\left(Z\otimes \bigwedge^{S_\lieq}\lieu\right)^\vee)\;\cong\;Z^\vee,
$$
and
$$
H^0(\lieu;R^{S_\lieq}I_{i,E}(\mathcal F_{p}\left(Z\otimes \bigwedge^{S_\lieq}\lieu\right))
\;\cong\;Z,
$$
respectively.
\end{theorem}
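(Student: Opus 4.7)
The plan is to reduce the statement to the classical algebraic Borel-Bott-Weil theorem over a splitting field, and then descend the conclusions back to $E$ by Galois theory. By Theorem \ref{thm:basechangeinduction}, both $L_qP_{i,E}(-)$ and $R^qI_{i,E}(-)$ commute with the base change $-\otimes_{E,\sigma}F$, and the Koszul complexes computing $H_\bullet(\lieu;-)$ and $H^\bullet(\lieu;-)$ are manifestly base-change compatible. It therefore suffices to establish the vanishing in degrees $q\neq S_\lieq$ together with the identification of $\lieu$-(co)homology in degree $q=S_\lieq$ after passage to $F$, and then separately to descend irreducibility.

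Over $F$, the module $Z_F$ decomposes as a direct sum $Z_F=\bigoplus_j Z_F^{(j)}$ of absolutely irreducible $(\liel_F,L_F)$-modules, forming a single $\Gal(\overline{E}/E)$-orbit through a finite quotient, each with $\lieb_F\cap\liel_F$-highest weight $\lambda_j$ which is $\lieb_F$-dominant by hypothesis. I would then apply the classical algebraic Borel-Bott-Weil theorem for pairs, as developed in Chapters IV and V of \cite{book_knappvogan1995}, to each summand $Z_F^{(j)}$. This yields vanishing of both derived functors in degrees $q\neq S_\lieq$ and, in top degree, absolutely irreducible $(\liek_F,K_F)$-modules characterized by
$$H_0(\lieu_F;\,L_{S_\lieq}P_{i,F}(\mathcal F_p(Z_F^{(j)}\otimes\bigwedge^{S_\lieq}\lieu_F)^\vee))\;\cong\;(Z_F^{(j)})^\vee,$$
and an analogous identification on the Zuckerman side with $H^0$ and $Z_F^{(j)}$. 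Summation over $j$ assembles these to the full statement over $F$.

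To descend irreducibility to $E$, observe that any proper $(\liek,K)_E$-submodule would, after base change, produce a non-trivial proper $\Gal(F/E)$-stable submodule of the $F$-module. However, the latter is the Galois-transitive direct sum of the $(\liek_F,K_F)$-isotypic pieces corresponding to the orbit $\{\lambda_j\}_j$, which admits no proper Galois-stable submodule; the $E$-modules are therefore irreducible. The $E$-level $\lieu$-(co)homology isomorphisms follow at once from their $F$-counterparts by Galois descent, combined with the irreducibility of $Z$ over $E$.

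The main technical obstacle is the interplay of the disconnectedness of $K$ with the non-algebraic-closedness of $E$. The former is handled conceptually by the hypothesis that $L$ meets every connected component of $K$, whence $K=K^0L$ and the $K$-module structure is determined by the underlying $K^0$-module structure together with the prescribed action of $L$, placing us in precisely the setting to which the treatment in \cite{book_knappvogan1995} applies verbatim over $F$. The descent issue is fully controlled by Theorem \ref{thm:basechangeinduction}. Once these two points are cleanly isolated, the remainder reduces to bookkeeping around the classical statement.
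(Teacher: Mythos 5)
Your overall plan coincides with the paper's: both reduce to the split case over a field $F$ by Theorem \ref{thm:basechangeinduction}, invoke the classical algebraic Borel--Bott--Weil theorem (Proposition 4.173 of \cite{book_knappvogan1995}), and then descend irreducibility to $E$. The handling of the disconnectedness of $K$ via the hypothesis that $L$ meets every component is also the paper's route. So up to the final descent step, the two arguments are essentially the same.

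The gap is in the descent of irreducibility. You assert that $X_F$ ``is the Galois-transitive direct sum of the $(\liek_F,K_F)$-isotypic pieces corresponding to the orbit $\{\lambda_j\}_j$, which admits no proper Galois-stable submodule.'' That inference is only valid when the decomposition $Z_F=\bigoplus_j Z_F^{(j)}$ is multiplicity-free, i.e.\ when $\End_{\liel,L}(Z)$ is a field. The theorem allows arbitrary irreducible $Z$ over $E$, whose endomorphism algebra can be a non-commutative division algebra; then $Z_F$ (and hence $X_F$) decomposes with multiplicities, a single isotypic piece can have multiplicity $>1$, and transitivity of the Galois permutation of the pieces no longer rules out proper Galois-stable $F$-submodules. (The prototypical obstruction: a quaternionic irreducible $Z$ over $\RR$ has $Z_\CC\cong Z_0\oplus Z_0$, one isotypic piece with multiplicity $2$; the absence of proper Galois-stable submodules here is a Brauer-group phenomenon, not a transitivity statement.) The equivalence ``$X$ irreducible over $E$ iff $X_F$ has no proper Galois-stable submodule'' is always true, but you need a non-circular argument for the latter.

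The paper closes this gap differently: it first proves the isomorphism $H_0(\lieu;X)\cong Z^\vee$ \emph{over $E$} (not just over $F$), using that $\Hom_{\liel,L}(Z^\vee,H_0(\lieu;X))$ is non-zero because $\Hom$-spaces commute with extension of scalars (Proposition 1.1 of \cite{januszewskipreprint}) and the space is non-zero over $F$; the irreducibility of $Z^\vee$ and a dimension count then force the isomorphism. Then, for any non-zero quotient $Y$ of $X$ over $E$, right-exactness of $H_0(\lieu;-)$ and the semisimplicity of $X_F$ show the epimorphism $H_0(\lieu;X)\to H_0(\lieu;Y)$ is onto a non-zero space, hence an isomorphism since the source is $Z^\vee$; this forces $Y=X$. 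That argument detects proper submodules of $X$ via $\lieu$-homology and transparently handles multiplicities, whereas the bare Galois-orbit argument does not. If you want to keep the Galois-descent packaging, you would need to show the correspondence $Z\leftrightarrow X$ induces a Galois-equivariant bijection of submodule lattices --- which is again most naturally done through the $H_0(\lieu;-)$ characterization.

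Finally, a minor point: you invoke ``Chapters IV and V'' of \cite{book_knappvogan1995}, but the precise reference the paper leans on is Proposition 4.173, and the $(\liek,K)$-module identification of $\lieu$-(co)homology over $E$ uses (4.170g) together with Proposition 1.1 of \cite{januszewskipreprint}; citing those pins down the argument more tightly.
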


\begin{proof}
To prove the vanishing statement we may assume $E=F$, i.e.\ $K^0$ is split over $E$, as $L_{q}P_{i^-,E}$ and $R^{q}I_{i,E}$ commute with extension of scalars by Theorem \ref{thm:basechangeinduction}. In this case the proof proceeds identically as the proof of Proposition 4.173 in \cite{book_knappvogan1995}. This proof also shows the validity of Theorem when $K^0$ is split over $E$.

In the case where $K^0$ is not split over $E$, it remains to show that for irreducible but not absolutely irreducible $Z$ the resulting modules are irreducible again. We set
$$
X\;:=\;L_{q}P_{i,E}(\mathcal F_{p}\left(Z\otimes \bigwedge^{S_\lieq}\lieu\right)^\vee).
$$
Let us show that over $E$,
\begin{equation}
H^0(\lieu;X)\;\cong\;Z^\vee.
\label{eq:ucohomologyiso}
\end{equation}
The left hand side becomes isomorphic to $Z_F^\vee$ over $F$, since the statement is known for irreducibles over $F$ and all involved functors commute with direct sums. Now
$$
\Hom_{\liel,L}(Z^\vee,H_{0}(\lieu^-;X)\;\neq\;0
$$
by (4.170g) of loc.\ cit., which is valid over $E$, or alternatively by Proposition 1.1 in \cite{januszewskipreprint}, since this Hom-space is non-zero over $F$. This then implies the existence of the isomorphism \eqref{eq:ucohomologyiso} over $E$.

Now for any non-zero $(\liek,K)$-factor module $Y$ of $X$ over $E$ we obtain an epimorphism
$$
H_0(\lieu;X)\;\to\;H_0(\lieu;Y).
$$
Since $Y$ is non-zero the right hand side is non-zero, because it is clearly non-zero over $F$. Therefore the irreducibility of the left hand side implies $Y=X$. Hence $X$ is irreducible.

The case of the Zuckerman functor is treated mutatis mutandis.
\end{proof}

\section{Hard Duality}

In this section we show that hard duality holds over any base field $E$ of characteristic $0$ and is natural in $E$. The technical hypothesis which makes hard duality work in the classical situation is the compactness of the groups $K$ and $L$ occuring in the pairs in question, as this leads, via the the existence and uniqueness of Haar measures, to a natural duality theory between $C(K)$ and $R(K)$. This duality may then be extended to the complexes computing the derived functors of $P_\iota$ and $I_\iota$, which ultimately leads to the duality statement.

In our situation the compactness property is replaced by the reductiveness of the linear algebraic groups $K$ and $L$. Here the duality manifests itself in a natural duality between $\OO_E(K)$ and $R_E(K)$, provided by the abstract algebraic notion of integral on Hopf algebras. The notion of left and right integral may be defined for any Hopf algebra. However, since $K$ is reductive, $\OO_E(K)$ is cosemisimple and this implies that left and right integrals do exist and in fact span the same one-dimensional space inside $\OO_E(K)^*$. We will first recall these classical facts, then state the Hard Duality Theorem, and show how the classical proof given in Chapter III of \cite{book_knappvogan1995} carries over to our situation without modification.

\subsection{Integrals on Hopf algebras}

As in section \ref{sec:reductivehecke} our standard reference for the material of this section is \cite{book_sweedler1969}. All results we need are to be found in Chapters V, XIV, and XV of loc.\ cit.

We let $K$ denote a reductive group over a field $E$ of characteristic $0$ and write $1_\OO$ for the identity in $\OO_E(K)$. For $\xi_1,\xi_2\in\OO_E(K)^*$ we write $\xi_1\cdot\xi_2$ for the {\em algebra product} of the two linear forms in the algebra $\OO_E(K)^*$. This algebra structure is inherited from the coalgebra structure on $\OO_E(K)$.

An element $\lambda\in\OO_E(K)^*$ is called a {\em left integral} (on $K$) if for any $\xi\in\OO_E(K)^*$ we have
$$
\xi\cdot\lambda\;=\;\xi(1_\OO)\lambda.
$$
Similarly $\lambda$ is called a {\em right integral} if for any $\xi\in\OO_E(K)^*$
$$
\lambda\cdot\xi\;=\;\xi(1_\OO)\lambda.
$$
We write $\Int_E(K)$ for the space of left integrals on $K$ over $E$. As $K$ is reductive, $\Int_E(K)$ is also the space of right integrals on $K$ and is a one-dimensional sub-$E$-vectorspace of $R_E(K)$.

\begin{proposition}\label{prop:integraliso}
Let $K$ be a reductive group over $E$. Then we have a canonical isomorphism
\begin{equation}
R_E(K)\;\cong\;\Int_E(K)\otimes_E \OO_E(K)
\label{eq:integraliso}
\end{equation}
of $\OO_E(K)$-Hopf modules. This isomorphism is natural in $E$, i.e.\ it commutes with base change.
\end{proposition}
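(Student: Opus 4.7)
The plan is to apply the Fundamental Theorem of Hopf Modules (Theorem 4.1.1 in \cite{book_sweedler1969}) to $R_E(K)$ regarded as an $\OO_E(K)$-Hopf module. First I would endow $R_E(K)$ with this Hopf module structure: the $\OO_E(K)$-comodule structure is the rational $K$-action on $R_E(K)$, which is well-defined because by Proposition \ref{prop:leftrightideals} every element of $R_E(K)$ is $K$-finite and the coaction is read off from the decomposition \eqref{eq:ocodecomposition}; the $\OO_E(K)$-module structure is the hit action dual to the multiplication of $\OO_E(K)$, which preserves the subspace $R_E(K) \subseteq \OO_E(K)^*$ of $K$-finite linear forms. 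The Hopf module compatibility between these two structures is then a formal consequence of the Hopf algebra axioms for $\OO_E(K)$.

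Second, I would identify the space of coinvariants $R_E(K)^{\mathrm{co}\,\OO_E(K)}$ with $\Int_E(K)$. An element $\lambda \in R_E(K)$ is coinvariant precisely when its coaction equals $\lambda \otimes 1_\OO$, and translating this condition via the duality with the multiplication of $\OO_E(K)$ yields exactly the left-integral identity $\xi\cdot\lambda = \xi(1_\OO)\lambda$ from the definition preceding the proposition. Once this identification is in place, the Fundamental Theorem of Hopf Modules produces a canonical isomorphism
$$\Int_E(K)\otimes_E\OO_E(K) \;\xrightarrow{\sim}\; R_E(K),$$
of $\OO_E(K)$-Hopf modules, which is exactly \eqref{eq:integraliso}.

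Finally, for the naturality in $E$, I would observe that each ingredient of the construction commutes with base change along any field map $\sigma: E \to F$: the isomorphism $R_E(K) \otimes_{E,\sigma} F \cong R_F(K\times_{\Spec E,\sigma}\Spec F)$ is provided by Proposition \ref{prop:heckebasechange}, the Hopf algebra structure on $\OO_E(K)$ is intrinsic under $-\otimes_{E,\sigma} F$, and the left-integral defining equation is preserved under base change, combined with the fact that $K_F := K\times_{\Spec E,\sigma}\Spec F$ remains reductive, so that $\Int_F(K_F)$ is again one-dimensional and equals the $F$-linear span of the image of $\Int_E(K)$. The hard part would be the formal verification of the Hopf module compatibility and the coinvariant-integral dictionary, which requires careful bookkeeping in Sweedler notation but is routine given the treatment in Chapters V, XIV, and XV of \cite{book_sweedler1969}.
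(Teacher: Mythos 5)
Your proof is correct and takes essentially the same route as the paper: both appeal to the Fundamental Theorem of Hopf Modules applied to $R_E(K)$ as a Hopf module over $\OO_E(K)$ (Chapter V of Sweedler), identify coinvariants with integrals, and get naturality in $E$ from the explicitness of that isomorphism together with the base-change results already established. You simply spell out what the paper leaves as a citation, and the argument is sound.
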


\begin{proof}
The existence of the canonical isomorphism \eqref{eq:integraliso} is a standard fact in the theory of Hopf algebras (cf.\ chapter V of loc.\ cit.). The construction of the map
$$
\Int_E(K)\otimes_E \OO_E(K)\;\to\;R_E(K)
$$
on p.\ 97 of loc.\ cit.\ shows that \eqref{eq:integraliso} commutes with base change.
\end{proof}

\subsection{Hard Duality over general bases}\label{sec:hardduality}

We consider any map of pairs $\iota:(\lieh,L)\to (\lieg,K)$ over $E$ such that $\iota(\lieh)=\lieg$. Let $\liec$ be an $L$-invariant complement of $\iota^{-1}(\liek)$ inside $\liel$ and write $m$ for its dimension. Since $L$ acts on $\liec$, it also acts on $\bigwedge^m\liec$. We extend this action to $\lieh$ and thus to $R_E(\lieh,L)$ as in the classical case (cf.\ page 183 of \cite{book_knappvogan1995}).

\begin{theorem}[Hard Duality]\label{thm:hardduality}
For any map of pairs $\iota:(\lieh,L)\to (\lieg,K)$ over $E$ such that $\iota(\lieh)=\lieg$, and any choice of integral $0\neq dk\in\Int_E(K)$, we have for any $(\lieh,L)$-module $V$ isomorphisms
\begin{eqnarray*}
\mathcal D_{\iota,E}^{dk}:\quad L_jP_{\iota,E}(V\otimes_E\left(\bigwedge^m\liec\right)^*) &\cong& R^{m-j}I_{\iota,E}(V),\\
L_jP_{\iota,E}(V\otimes_E\bigwedge^m\liec)^\vee &\cong& L_{m-j}P_{\iota,E}(V^\vee),\\
R^jI_{\iota,E}(V\otimes_E\bigwedge^m\liec)^\vee &\cong& R^{m-j}I_{\iota,E}(V^\vee),
\end{eqnarray*}
where in all cases $0\leq j\leq m$. Furthermore all these isomorphisms are natural in $V$ and $E$, and depend linearly on $dk$, and for any map $\sigma:E\to F$ of fields we have a commutative square
$$
\begin{CD}
L_jP_{\iota,E}(V\otimes_E\left(\bigwedge^m\liec\right)^*) @>\mathcal D_{\iota,E}^{dk}>> R^{m-j}I_{\iota,E}(V)\\
@V{\sigma}VV @VV{\sigma}V\\
L_jP_{\iota^\sigma,F}(V^\sigma\otimes_F\left(\bigwedge^m\liec^\sigma\right)^*) @>>\mathcal D_{\iota^\sigma,F}^{dk^\sigma}> R^{m-j}I_{\iota^\sigma,F}(V^\sigma)
\end{CD}
$$
where
$$
V^\sigma\;=\;V\otimes_{E,\sigma}F,
$$
and
$$
\iota^\sigma:\quad(\lieh^\sigma,L^\sigma)\to (\lieg^\sigma,K^\sigma)
$$
is the induced map of pairs over $F$ under the functor $-\otimes_{E,\sigma}F$.
\end{theorem}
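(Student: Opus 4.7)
My plan is to port the chain-level proof of hard duality in Chapter III of \cite{book_knappvogan1995} to an arbitrary characteristic zero base $E$, with the one substantive substitution being that Haar integration against a compact group is replaced by the Hopf-algebraic integral supplied by Proposition \ref{prop:integraliso}. The classical argument only uses Haar measure through the fact that, up to a bi-$K$-equivariant twist, it identifies $R(K)$ with the algebraic dual of $C(K)$; the canonical, base-change-compatible isomorphism $R_E(K)\cong\Int_E(K)\otimes_E\OO_E(K)$ is precisely the algebraic analogue. Fixing $0\neq dk\in\Int_E(K)$ thus yields a $K$-bi-equivariant pairing between $R_E(K)$ and $\OO_E(K)$ over $E$, and the smash-product construction of Section 1 extends this to a pairing involving $R_E(\lieh,L)$ and $R_E(\lieg,K)$.

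First I would recall the explicit Koszul complexes computing $L_\bullet P_{\iota,E}(V)$ and $R^\bullet I_{\iota,E}(V)$, both of length $m$ in the $\liec$-direction, and use the $dk$-pairing to construct, modulo the twist by $(\bigwedge^m\liec)^*$, an explicit isomorphism of complexes of degree $m$ identifying the two. The sign and equivariance checks are formally identical to \cite{book_knappvogan1995}, because the Hopf-algebra identities packaged in Proposition \ref{prop:integraliso} encode exactly the same symmetries that Haar integration satisfies over $\CC$. Passage to (co)homology produces $\mathcal D_{\iota,E}^{dk}$ and yields assertion (a). The assertions (b) and (c) are obtained by the same chain-level machinery applied in the self-dual direction: they are the ordinary duality statements for $L_\bullet P_{\iota,E}$ and for $R^\bullet I_{\iota,E}$ respectively, and the integral $dk$ again supplies the required pairings between the explicit complexes and their $K$-finite duals. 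Linearity in $dk$ and naturality in $V$ are immediate, since $\mathcal D_{\iota,E}^{dk}$ is constructed as a natural transformation of functors that depends linearly on the chosen pairing.

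For the commutative base-change square, I would combine the naturality-in-$E$ clause of Proposition \ref{prop:integraliso}, which identifies the image of $dk$ under $-\otimes_{E,\sigma}F$ with some $dk^\sigma\in\Int_F(K^\sigma)$, with Proposition \ref{prop:heckebasechange} and Theorem \ref{thm:basechangeinduction}. Because every ingredient of $\mathcal D_{\iota,E}^{dk}$ (the Koszul complex, the Hecke-algebra actions, the integral pairing) is preserved by $-\otimes_{E,\sigma}F$, applying this functor to the chain map over $E$ produces the chain map over $F$ built from $dk^\sigma$, and after passage to (co)homology this is exactly the asserted commutative diagram. The main obstacle I anticipate is not representation-theoretic but the careful sign- and twist-bookkeeping in step two: verifying that the chain-level map really commutes with the Koszul differentials and that those occasional places in Knapp--Vogan where positivity or conjugation-invariance of the Haar measure is used implicitly can be closed up using only the Hopf-algebra identities for $dk$. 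The key conceptual input that makes all of this go through over $E$ is Proposition \ref{prop:integraliso}, which is intrinsically defined over any base of characteristic zero.
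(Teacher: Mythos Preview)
Your overall strategy---port the Knapp--Vogan Chapter III argument and replace Haar measure by the Hopf-algebraic integral of Proposition~\ref{prop:integraliso}---is exactly the paper's approach, and your identification of that proposition as the essential new input is correct. The paper also deduces isomorphisms (b) and (c) from (a) together with Easy Duality rather than by a separate chain-level argument, which is a minor economy.

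However, you misidentify the main obstacle. The sign and differential checks really are formal, and no positivity or conjugation properties of Haar measure are needed; the Hopf-algebraic identities suffice. The genuine difficulty is the one you do not mention: the chain-level map you build is manifestly $K$-equivariant, but the $\lieg$-action on the standard complexes computing $L_\bullet P_{\iota}$ and $R^\bullet I_\iota$ is only defined \emph{up to homotopy}, so there is no direct way to check $\lieg$-equivariance of your chain map. This is why the paper first proves the isomorphism only as $K$-modules (your argument suffices for that), and then handles $\lieg$-equivariance separately. For $E\subseteq\CC$ the paper uses a shortcut: extend scalars to $\CC$, invoke the classical theorem to see the map is $\lieg_\CC$-equivariant there, and descend. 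For general $E$ one must instead follow Knapp--Vogan's reduction (Section III.8) to the two extreme cases---$\iota:L\to K$ surjective, and $\iota$ an inclusion $(\lieg,L)\hookrightarrow(\lieg,K)$---using the Enright--Wallach description of the $\lieg$-action (Proposition 3.77) and the Duflo--Vergne description of the vertical maps in diagrams (3.79a,b). None of this requires anything analytic, so it does carry over to arbitrary $E$, but you should be aware that this is where the real work lies, not in the sign bookkeeping.
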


\begin{proof}
We remark that it suffices to prove the first isomorphism, as the other two are easy consequences of the first combined with Easy Duality.

We first prove the Theorem for the case $\lieg=\liek$. As in Lemma 3.32 of loc.\ cit.\ we consider for any trivial rational $K$-module $U$ the map
\begin{equation}
\OO_E(K)dk\otimes_E U\;\to\;\Hom_E(R_E(K),U)_K,
\label{eq:lemma332iso}
\end{equation}
given by
$$
fdk\otimes u\;\mapsto\;\lambda'_{f\otimes u}
$$
with
$$
\lambda'_{f\otimes u}:\quad T\mapsto T(f^t)u.
$$
By Proposition \ref{prop:integraliso} it is readily seen that \eqref{eq:lemma332iso} is an isomorphism of rational $K$-modules, which is natural in $E$ and commutes with $\sigma$ as claimed.

We let $\iota_K:(\iota^{-1}(\liek),L)\to(\liek,K)$ be the map of pairs induced by $\iota$ on the \lq{}compact pairs\rq{}. Let $V$ be any $(\liek,K)$-module. Analogously to (3.31a) and (3.31b) in loc.\ cit., the isomorphism \eqref{eq:lemma332iso} gives rise to an isomorphism
$$
\OO_E(K)dk\otimes\bigwedge^j\liec\otimes\mathcal F_{\iota_K}(V)\otimes\left(\bigwedge^m\liec\right)^*\to
\Hom_E(R_E(K),\Hom_E\bigwedge^{m-j}\liec,\mathcal F_{\iota_K}(V)))_K,
$$
of rational $K$-modules, where all tensor products are taken over $E$. This map is explicitly given by
$$
fdk\otimes\xi\otimes v\otimes\varepsilon\;\mapsto\;\lambda_{f\otimes\xi\otimes v\otimes\varepsilon}
$$
with
$$
\lambda_{f\otimes\xi\otimes v\otimes\varepsilon}(T)(\gamma)\;:=\;\varepsilon(\xi\wedge\gamma)T(f^t)v.
$$
This map descends to $L$-invariants, and hence induces a $K$-isomorphism
\begin{eqnarray*}
\lambda_j:&&\OO_E(K)dk\otimes_{R_E(L)}\left(\bigwedge^j\liec\otimes\mathcal F_{\iota_K,E}(V)\otimes\left(\bigwedge^m\liec\right)^*\right)\\
&&\to\Hom_{L}(R_E(K),\Hom_E\bigwedge^{m-j}\liec,\mathcal F_{\iota_K,E}(V)))_K,
\end{eqnarray*}
We let $\partial$ and $d$ denote the differentials of the complexes computing $L_jP_{\iota_K}$ and $R^jI_{\iota_K}$ as in (3.27) and (3.28) of loc.\ cit., respectively. Then we have the identity
$$
\lambda_{j-1}\circ\partial\;=\;(-1)^jd\circ\lambda_j,
$$
which shares the same proof, line by line, as the identity (3.35) in loc.\ cit. Therefore the isomorphisms $\lambda_j$ descend to homology and cohomology respectively. This proves the existence of a natural isomorphism
$$
L_jP_{\iota_K,E}(V\otimes_E\left(\bigwedge^m\liec\right)^*) \;\cong\; R^{m-j}I_{\iota_K,E}(V),
$$
which evidently commutes with base change. This completes the proof of hard duality as an isomorphism of $K$-modules.

Now if $E$ is a subfield of $\CC$, which is the case of interest for our number theoretic applications, we may reduce general case to the classical case over $\CC$ as follows. We first observe that Hard Duality over $\CC$ for $\iota:(\lieh,L)\to(\lieg,K)$ is compatible with Hard Duality for the restriction $\iota_K:(\iota^{-1}(\liek),L)\to(\liek,K)$ on compact pairs (cf.\ Proposition 2.69 and section 5 of chapter VI of loc.\ cit., in particular (6.43) and (6.44)). Therefore, to check that the $K$-equivariant isomorphism we constructed over $E$ is $\lieg$-equivariant, we may extend scalars to $\CC$, where we know it to be $\lieg_\CC$-equivariant by Theorem 3.5 of loc.\ cit.. This then already implies the claim over $E$.

It seems worth noting that the proof of the general case for general base fields $E$ may proceed as in \cite{book_knappvogan1995} as well. The difficulty lies in the description of the $\lieg$ action, which, on the complexes at hand, may only be described up to homotopy. This renders the final argument lengthy and technical, and we restrict ourselves to discuss the essential steps, again following loc.\ cit.

As in section 8 of chapter III of loc.\ cit.\ the proof readily reduces to two extreme cases:
\begin{itemize}
\item[(i)] The map $\iota:L\to K$ is onto.
\item[(ii)] The map $\iota:(\lieg,L)\to (\lieg,K)$ is an inclusion of pairs.

The proof of case (i) proceeds as follows. First Knapp and Vogan relate $P_\iota$ and $I_\iota$ to suitable $(\lies,M)$-homology and $(\lies,M)$-cohomology respectively. More concretely, let $\lies$ denote the kernel of $\iota:\lieh\to\lieg$ and let $M$ denote the kernel of $\iota:L\to K$. Then there are isomorphisms
\begin{eqnarray*}
\mathcal F_{(0,1)\to(\lieg,K),E}L_jP_{\iota,E}(V)&\cong& H_j(\lies,M;\mathcal F_{(\lies,M)\to(\lieh,L),E}(V)),\\
\mathcal F_{(0,1)\to(\lieg,K),E}R^jI_{\iota,E}(V)&\cong& H^j(\lies,M;\mathcal F_{(\lies,M)\to(\lieh,L),E}(V)),
\end{eqnarray*}
\end{itemize}
of $E$-vector spaces, and we may recover the $(\lieg,K)$-actions on the right hand sides via an explicit description on the standard complexes for $(\lies,M)$-(co)homology as in (3.43a) and (3.43b) of loc.\ cit.. Indeed, the first part of the proof of Proposition 3.41 of loc.\ cit.\ is completely formal and categorical, and, with what we have already proved, carries over to our setting without change. This shows that we indeed have isomorphisms of $E$-vector spaces. The verification of the $(\lieg,K)$-equivariance follows from a concise calculation, which remains valid in our case as well. As for $(\lies,M)$-(co)homology Hard Duality is a straightforward explicit calculation (cf.\ pages 203--205 of loc.\ cit.), this then proves case (i).

The proof of case (ii), which is in a sense the most interesting one, is based on similar arguments. The categorification of the $\lieg$-action on an $E$-vector space $M$ as an $E$-linear $\lieg$-equivariant map
$$
\mu:\quad\lieg\otimes_E M\to M
$$
or
$$
\mu:\quad M\to\Hom_E(\lieg,M)
$$
is straightforward (cf.\ page 216 of loc.\ cit.). Proposition 3.77 of loc.\ cit.\ (due to Enright-Wallach) describes the $\lieg$-action on the derived Bernstein and Zuckerman functors, and its proof readily extends to general base fields $E$. Duflo-Vergne's description of the vertical maps in the crucial diagrams (3.79a) and (3.79b) of loc.\ cit., again remains valid with the same proof for any base field $E$. Given this, the verification of the $\lieg$-equivariance is a formal computation, excercised in section 7 of chapter III of loc.\ cit., and valid for general $E$ again. This then completes the proof of case (ii).
\end{proof}

\section{Cohomologically induced functionals}

We give a functorial description of Sun's cohomologically induced functionals from \cite{sunpreprint2} based on base change morphisms and Hard Duality, valid over any field of characteristic $0$.

\subsection{Base Change Morphisms}

We consider a commutative square of arbitrary pairs
\begin{equation}
\begin{CD}
(\lieg,K)@<j_K<<(\liel,M)\\
@Ai_{\lieq}AA @Ai_{\liep}AA\\
(\lieq,L)@<j_L<<(\liep,N)\\
\end{CD}
\label{eq:basechangesquare}
\end{equation}
over any field $E$ of characteristic $0$. For any $(\lieq,L)$-module $X$ over $E$, the image of the identity under the map
$$
\Hom_{\liep,N}(\mathcal F_{j_L}(X),\mathcal F_{j_L}(X))\to
\Hom_{\liep,N}(\mathcal F_{j_L}\mathcal F_{i_\lieg}I_{i_\lieq}(X)
,\mathcal F_{j_L}(X))
$$
induced by the counit $\mathcal F_{i_\lieq}I_{i_\lieq}\to 1$ gives us a natural $(\liel,M)$-morphism, the {\em base change map}
\begin{equation}
\alpha:\;\;\;\mathcal F_{j_K}I_{i_\lieq}(X)\to I_{i_\liep}\mathcal F_{j_L}(X),
\label{eq:basechangealpha}
\end{equation}
by the commutativity of the above diagram. Dually we obtain a natural {\em base change map}
\begin{equation}
\beta:\;\;\;P_{i_\liep}\mathcal F^\vee_{j_L}(X)\to\mathcal F^\vee_{j_K}P_{i_{\lieq}}(X).
\label{eq:basechangebeta}
\end{equation}
We remark that these maps are natural in the same way the involved functors behave naturally in commutative squares and in $E$ (we suppressed $E$ as subscript of the functors).

We also introduce for maps of fields $\sigma:E\to F$ the notational convention
$$
\lieg^\sigma\;:=\;\lieg\otimes_{E,\sigma} F,\quad K^\sigma\;:=\;K\times_{\Spec E,\sigma}\Spec F,
$$
and likewise for other pairs.

\begin{lemma}\label{lem:rationalbasechange}
Then for any map of fields $\sigma:E\to F$, we have commutative squares
\begin{equation}
\begin{CD}
\mathcal F_{j_K}I_{i_\lieq}(X)@>\alpha>> I_{i_\liep}\mathcal F_{j_L}(X)\\
@V{\sigma}VV @VV{\sigma}V\\
\mathcal F_{j_{K^\sigma}}I_{i_{\lieq^\sigma}}(X^\sigma)@>>\alpha^\sigma> I_{i_{\liep^\sigma}}\mathcal F_{j_{L^\sigma}}(X^\sigma)
\end{CD}
\label{eq:alphasigmabasechange}
\end{equation}
and
\begin{equation}
\begin{CD}
P_{i_\liep}\mathcal F^\vee_{j_L}(X)@>\beta>>\mathcal F^\vee_{j_K}P_{i_{\lieq}}(X)\\
@V{\sigma}VV @VV{\sigma}V\\
P_{i_{\liep^\sigma}}\mathcal F^\vee_{j_{L^\sigma}}(X^\sigma)@>>\beta^\sigma>\mathcal F^\vee_{j_{K^\sigma}}P_{i_{\lieq^\sigma}}(X^\sigma)\\
\end{CD}
\label{eq:betasigmabasechange}
\end{equation}
Furthermore $\alpha^\sigma$ and $\beta^\sigma$ agree with the base change morphisms associated to the image of the diagram \eqref{eq:basechangesquare} under the functor $-\otimes_{E,\sigma}F$.
\end{lemma}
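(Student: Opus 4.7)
The plan is to observe that $\alpha$ and $\beta$ are constructed purely from three ingredients, each of which is already known to be compatible with scalar extension: the forgetful functors $\mathcal{F}_{(-)}$ and $\mathcal{F}_{(-)}^\vee$, the Bernstein and Zuckerman functors $P_{(-)}$, $I_{(-)}$, and the unit/counit of the adjunctions between them. Concretely, $\alpha$ is the image of $\mathrm{id}_{\mathcal{F}_{j_L}(X)}$ under
$$
\Hom_{\liep,N}(\mathcal{F}_{j_L}(X),\mathcal{F}_{j_L}(X)) \;\longrightarrow\; \Hom_{\liep,N}(\mathcal{F}_{j_L}\mathcal{F}_{i_\lieq}I_{i_\lieq}(X),\mathcal{F}_{j_L}(X)) \;=\; \Hom_{\liep,N}(\mathcal{F}_{i_\liep}\mathcal{F}_{j_K}I_{i_\lieq}(X),\mathcal{F}_{j_L}(X)),
$$
followed by the adjunction isomorphism with $I_{i_\liep}$. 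So the verification of \eqref{eq:alphasigmabasechange} decomposes into checking that each of these three operations commutes with $-\otimes_{E,\sigma}F$.

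First, I would invoke Corollary \ref{cor:forgetfulbasechange} to obtain the natural isomorphisms $\mathcal{F}_{j_{K^\sigma}}\circ(-\otimes_{E,\sigma}F) \cong (-\otimes_{E,\sigma}F)\circ\mathcal{F}_{j_K}$ and similarly for $\mathcal{F}_{j_L}$, $\mathcal{F}_{i_\lieq}$, $\mathcal{F}_{i_\liep}$ and their $(\cdot)^\vee$-variants. Theorem \ref{thm:basechangeinduction} (in degree $j=0$) then gives the analogous natural isomorphisms for $I_{i_\lieq}$ and $P_{i_\liep}$. Next I would verify that the counit $\mathcal{F}_{i_\lieq}I_{i_\lieq}\to 1$ of the adjunction is natural under $\sigma$: by the uniqueness of adjoints, the base change of the counit over $E$ corresponds under the transported adjunction $(\mathcal{F}_{i_{\lieq^\sigma}}, I_{i_{\lieq^\sigma}})$ to the counit over $F$, which is precisely the compatibility we need. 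The corresponding statement for the unit $1\to I_{i_\liep}\mathcal{F}_{i_\liep}$ dualizes to handle $\beta$.

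Given these three compatibilities, the verification of \eqref{eq:alphasigmabasechange} is a diagram chase: starting from $\mathrm{id}_{\mathcal{F}_{j_L}(X)}$ and applying $-\otimes_{E,\sigma}F$ produces $\mathrm{id}_{\mathcal{F}_{j_{L^\sigma}}(X^\sigma)}$, and following the definitions of $\alpha$ and $\alpha^\sigma$ through the base-changed adjunctions shows that both routes around the square in \eqref{eq:alphasigmabasechange} produce the same morphism. The argument for \eqref{eq:betasigmabasechange} proceeds mutatis mutandis with the roles of unit and counit interchanged and $\mathcal{F}$ replaced by $\mathcal{F}^\vee$, where one uses that the adjunction $(P_{i_\liep}, \mathcal{F}^\vee_{i_\liep})$ is likewise compatible with base change by Theorem \ref{thm:basechangeinduction} and Corollary \ref{cor:forgetfulbasechange}.

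Finally, the \emph{furthermore} assertion states that $\alpha^\sigma$ and $\beta^\sigma$ are the base change morphisms attached to the image square of \eqref{eq:basechangesquare} under $-\otimes_{E,\sigma}F$; this is immediate from the construction, since the recipe for $\alpha$ only uses the data of the commutative square and the counit, both of which transport faithfully under the functor $-\otimes_{E,\sigma}F$. No serious obstacle arises here; the only point requiring mild care is the compatibility of the adjunction morphisms with base change, which is guaranteed by the uniqueness of adjoints together with the already established naturality of $\mathcal{F}_{(-)}$, $I_{(-)}$, and $P_{(-)}$ in $E$.
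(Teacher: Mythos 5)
Your proof is correct and follows essentially the same route as the paper's: both reduce the claim to the base-change compatibility of the forgetful functors (Corollary \ref{cor:forgetfulbasechange}), of $I_{(-)}$ and $P_{(-)}$ in degree $0$ (Theorem \ref{thm:basechangeinduction}), and of the counit $\mathcal F_{i_\lieq}I_{i_\lieq}\to 1$. Your appeal to uniqueness of adjoints to justify the last point is a slightly more explicit rendering of what the paper leaves as ``the same argument shows,'' but the decomposition of the problem and the key lemmas invoked are identical.
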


\begin{proof}
Applying $\sigma$ to the commutative square of pairs \eqref{eq:basechangesquare} gives us another commutative square
\begin{equation}
\begin{CD}
(\lieg^\sigma,K^\sigma)@<j_{K^\sigma}<<(\liel^\sigma,M^\sigma)\\
@A{i_{\lieq^\sigma}}AA @A{i_{\liep^\sigma}}AA\\
(\lieq^\sigma,L^\sigma)@<j_{L^\sigma}<<(\liep^\sigma,N^\sigma)
\end{CD}
\label{eq:basechangesquare2}
\end{equation}

We claim that the base change maps $\alpha'$ and $\beta'$ associated to \eqref{eq:basechangesquare2} agree with $\alpha^\sigma$ and $\beta^\sigma$, the images of \eqref{eq:basechangealpha} and \eqref{eq:basechangebeta} under $\sigma$ respectively.

By Corollary \ref{cor:forgetfulbasechange} and Theorem \ref{thm:basechangeinduction} we know that we have a natural isomorphism
$$
\mathcal F_{j_K}I_{i_\lieq}(X)^\sigma\;=\;
\mathcal F_{j_{K^\sigma}}I_{i_{\lieq^\sigma}}(X^\sigma),
$$
that we write as identity by abuse of notation, and likewise for all other objects occuring in \eqref{eq:alphasigmabasechange}. Under this correspondence we have
$$
i_{\lieq}^\sigma\;=\;i_{\lieq^\sigma}
$$
on the level of maps, and again similarly for all other maps in \eqref{eq:basechangesquare2}.

The same argument shows that the natural map
$$
\Hom_{\liep,N}(\mathcal F_{j_L}(X),\mathcal F_{j_L}(X))\to
\Hom_{\liep,N}(\mathcal F_{j_L}\mathcal F_{i_\lieg}I_{i_\lieq}(X)
,\mathcal F_{j_L}(X))
$$
commutes with $\sigma$, as does the counit $\mathcal F_{i_\lieq}I_{i_\lieq}\to 1$. Therefore the claim follows in the first case, and the second case is proved similarly.
\end{proof}

\subsection{Cohomologically induced morphisms}\label{sec:inducedmaps}

With a view towards our applications, assume now that $E^+$ is a field embedded into $\RR$. Consider an inclusion of reductive pairs $i_L:(\lieh,L)\to(\lieg,K)$ over $E^+$. In particular $\liel$ is an $E^+$-form of a $\theta$-stable real subalgebra $\liel_\RR\subseteq\lieg_\RR$. We write $\liel$ and $\liek$ for the Lie algebras of $L$ and $K$ respectively. We assume further given a quadratic extension $E/E^+$, whose completion at the extension of the archimedean places extendingt the real place $E^+\to\RR$ is the field $\CC$ of complex numbers. Assume furthermore we are given $\theta$-stable parabolic subpairs $(\lieq,C)$ of $(\lieg,K)$ and $(\liep,D)$, all defined over $E$, with associated Levi pairs $(\liec,C)$ and $(\lied,D)$ (automatically defined over $E^+$) and nilpotent radicals $\lieu$ and $\liev$ (defined over $E$).

In the sequel we consider all these data over the field $E$ without further mention. We call the non-trivial automorphism of the extension $E/E^+$ simply complex conjugation. It is the restriction of the honest complex conjugation on $\CC$ to $E$ via our fixed embeddings.

We consider the commutative diagram of pairs
\begin{equation}
\begin{CD}
(\lieg,K)@<j_{K}<<(\lieh,L)@>1>>(\lieh,L)\\
@Ai_{\lieg}AA @AAi_{\lieg\cap \liel}A @AAi_{\liel}A\\
(\lieg,C)@<j_{C\cap L}<<(\lieh,C\cap D)@>j_{D}>>(\lieh,D)\\
@Ai_{\lieq}AA @Ai_{\lieq\cap\liep}AA @AAi_{\liep}A\\
(\lieq,C)@<k_{C}<<(\lieq\cap\liep,C\cap D)@>k_{D}>>(\liep,D)\\
\end{CD}
\label{diag:gtol}
\end{equation}
In this diagram all maps are inclusions that we denote as indicated. We number the small commutative squares from left to right and top to bottom and denote the corresponding base change maps by $\alpha_1$, $\alpha_2$, etc.

By our assumptions complex conjugation sends $\lieq$ and $\liep$ to their corresponding opposites, and we denote the resulting inclusions by $i_{\overline{\lieq}}$ and $i_{\overline{\liep}}$ respectively. We define the degrees
$$
S_\lieq\;:=\;\dim\liek/\liek\cap\lieq,
$$
and similarly
$$
S_\liep\;:=\;\dim\liel/\liel\cap\liep,
$$
$$
T_{\lieq\cap\liep}\;:=\;\dim\liel/\liec\cap\lied.
$$
We assume that the three conditions
\begin{equation}
\lieg\;\;=\;\;\lieh+\lieq,
\label{cond:sum}
\end{equation}
\begin{equation}
\lieh\cap\lieq\;\;=\;\;\liep\cap\lieq,
\label{cond:int}
\end{equation}
\begin{equation}
\liel\cap\lieq\cap\liep\;\;=\;\;\liec\cap\lied,
\label{cond:num}
\end{equation}
are satisfied. Conditions \eqref{cond:sum} and \eqref{cond:int} are essential for the construction, and under those two conditions, condition \eqref{cond:num} becomes equivalent to
$$
T_{\lieq\cap\liep}\;=\;S_\lieq\;+\;S_\liep,
$$
which means that the defect in the degrees in our construction vanishes. This is a condition one may drop but for the applications we have in mind it is essential.

We define the dualizing modules
$$
D_{\liek}\;:=\;
\bigwedge^{2S_{\lieq}}(\liek/\liec),
$$
mutatis mutandis
$$
D_{\liel}\;:=\;
\bigwedge^{2S_{\liep}}(\liel/\lied),
$$
and finally
$$
D_{\lieq\cap\liep}\;:=\;
\bigwedge^{T_{\lieq\cap\liep}}(\liel/\liec\cap\lied),
$$
with the respective actions of $(\lieg,C)$, $(\lieh,D)$, and $(\lieh, C\cap D)$ as in section \ref{sec:hardduality}. We remark that $D_{\liek}$ and $(D_{\liel})$ are trivial modules, and we assume that the same is true for $D_{\lieq\cap\liep}$. Then we may consider all these dualizing modules as trivial $(\lieg,K)$ and trivial $(\lieh,L)$-modules respectively. This allows us also to fix an isomorphism
\begin{equation}
\pi^*:\quad D_{\lieq\cap\liep}^*\;\to\;D_{\liel}^*
\label{eq:dualizingiso}
\end{equation}
of $(\lieh,L)$-modules over $E^+$.

We fix an $E$-rational $(\lieq,C)$-module $X$ and an $E$-rational $(\liep,D)$-module $Y$, and both with trivial actions of the nilpotent radicals, and assumed admissible for the actions of the Levi factors.

For any $(\lieq\cap\liep,C\cap D)$-equivariant map
$$
\eta_{\lieq\cap \liep}:\;\;\;\mathcal F_{k_{C}}(X_\lieq)\;\to\;\mathcal F^\vee_{k_{D}}(Y_\liep),
$$
we are going to construct for each $q\in\ZZ$ a natural $(\lieh,L)$-equivariant map
$$
\eta_{\lieh,q}:\;
\mathcal F_{j_{K}}L_{S_\lieq+q}P_{i_\lieg\circ i_{\overline{\lieq}}}(X)\otimes D_\liek^*
$$
$$
\;\to\;
L_{S_\liep+q}P_{i_\liel\circ i_{\overline{\liep}}}(Y)\otimes D_{\liel}^*.
$$
Naturality here is understood with respect to $X,Y$, $\eta_{\lieq\cap\liep}$, $\pi^*$, and $E$, which means in particular that our construction is functorial in the data given by diagram \ref{diag:gtol}. We will exploit this to study the effect on the bottom layer, which is crucial for our applications.

We fix an integral $dk\in\Int_E(K)$. Then Hard Duality provides us with an isomorphism
$$
\mathcal D_\lieg:=\mathcal D_{i_\lieg}^{dk}:\;\;\;
(L_{S_\lieq+q}P_{i_\lieg})P_{i_{\overline{\lieq}}}(X)\otimes D_{\liek}^*\to
(R^{S_\lieq-q}I_{i_\lieg})P_{i_{\overline{\lieq}}}(X).
$$
We have the natural map
$$
\alpha_1:\;\;\;
\mathcal F_{j_K}(R^{S_\lieq-q}I_{i_\lieg})P_{i_{\overline{\lieq}}}(X)\to
R^{S_\lieq-q}(I_{i_{\lieg\cap\liel}}\mathcal F_{j_{C\cap L}})P_{i_{\overline{\lieq}}}(X),
$$
induced by base change, as $I_{i_\lieg}$ sends injectives to injectives and the functor $\mathcal F_{j_\lieg}$ is exact. Since right derived functors are universal $\delta$-functors in the sense of \cite{grothendieck1957}, we get a natural map
\begin{equation}
\delta:\;\;\;
R^{S_\lieq-q}(I_{i_{\lieg\cap\liel}}\mathcal F_{j_{C\cap L}})P_{i_{\overline{\lieq}}}(X)\to
(R^{S_\lieq-q}I_{i_{\lieg\cap\liel}})\mathcal F_{j_{C\cap L}}P_{i_{\overline{\lieq}}}(X).
\label{eq:deltamap}
\end{equation}
We invoke Hard Duality again (fixing an integral $dl\in\Int_E(L)$) and obtain an isomorphism
$$
\mathcal D_{\lieg\cap\liel}^{-1}:\;\;\;
(R^{S_\lieq-q}I_{i_{\lieg\cap\liel}})\mathcal F_{j_{C\cap L}}P_{i_{\overline{\lieq}}}(X)
$$
$$
\to\;\;\;
(L_{S_\liep+q}P_{i_{\lieg\cap\liel}})
\mathcal F_{j_{C\cap L}}P_{i_{\overline{\lieq}}}(X)\otimes D_{\lieq\cap\liep}^*
$$
by condition \eqref{cond:num}. The third commutative square provides us by \eqref{cond:sum} and \eqref{cond:int} with a base change map
$$
\beta_3^{-1}\otimes 1_{D_{\lieq\cap\liep}^*}:\;\;\;
\mathcal F_{j_{C\cap L}}P_{i_{\overline{\lieq}}}(X)\otimes D_{\lieq\cap\liep}^*\;\to\;
P_{i_{\lieq\cap\liep}}\mathcal F_{k_{C\cap L}}(X)\otimes D_{\lieq\cap\liep}^*.
$$
Now we are in a position to apply $\eta_{\lieq\cap\liep}$ inside the argument and $\pi^*$ on the second outer tensor factor, which yields the map
$$
P_{i_{\lieq\cap\liep}}\mathcal F_{k_{C\cap L}}(\eta_{\lieq\cap\liep})\otimes\pi^*:\quad
P_{i_{\lieq\cap\liep}}\mathcal F_{k_{C\cap L}}(X)\otimes D_{\lieq\cap\liep}^*\;\to\;
P_{i_{\lieq\cap\liep}}\mathcal F_{k_{D}}^\vee(Y)\otimes D_{\liel}^*.
$$
From there we consider the base change map
$$
\beta_4\otimes 1_{D_{\liel}^*}:\;\;\;
P_{i_{\lieq\cap\liep}}\mathcal F_{k_{D}}^\vee(Y)\otimes D_{\liel}^*\;\to\;
\mathcal F_{j_{D}}^\vee P_{i_{\overline{\liep}}}(Y)\otimes D_{\liel}^*.
$$
Another base change gives, together with the same universal $\delta$-functor argument as above,
$$
\beta_2\otimes 1_{D_{\liel}^*}:\;\;\;
(L_{S_\liep+q}P_{i_{\lieg\cap\liel}})
\mathcal F_{j_{D}}^\vee P_{i_{\overline{\liep}}}(Y)\otimes D_{\liel}^*
\;\;\to\;\;
L_{S_\liep+q}P_{i_{\liel}\circ i_{\overline{\liep}}}(Y)\otimes D_{\liel}^*.
$$
Finally we define $\eta_{\lieh,q}$ as the composition of these maps:
$$
\eta_{\lieq,q}\;:=\;
\left((\beta_2\circ\beta_4\circ P_{i_{\lieq\cap\liep}}\mathcal F_{k_{C\cap L}}(\eta_{\lieq\cap\liep})\circ\beta_3^{-1})\otimes\pi^*\right)\circ
\mathcal D_{\lieg\cap\liel}^{-1}\circ\delta\circ\alpha_1\circ\mathcal D_{\lieg}.
$$
Then $\eta_{\lieh,q}$ is $(\lieh,L)$-equivariant by construction, since all individual maps are $(\lieh,L)$-equivariant.

\subsection{Functoriality properties}

In this section we study consequences of the functoriality properties of cohomologically induced maps.

\subsubsection{Rationality properties}

The construction of the map $\eta_{\lieh,q}$ commutes with base change in the following sense.

\begin{theorem}\label{thm:etabasechange}
Assume we are given the data as in section \ref{sec:inducedmaps} necessary for the construction of $\eta_{\lieh,q}$. Then for any map of fields $E\to F$ in characteristic $0$, we have a commutative square
$$
\begin{CD}
\mathcal F_{j_{K}}L_{S_\lieq+q}P_{i_\lieg\circ i_{\overline{\lieq}}}(X)\otimes D_\liek^*
@>\eta_{\lieh,q}>>
L_{S_\liep+q}P_{i_\liel\circ i_{\overline{\liep}}}(Y)\otimes D_{\liel}^*\\
@V{\sigma}VV @V{\sigma}VV\\
\mathcal F_{j_{K^\sigma}}L_{S_\lieq+q}P_{i_{\lieg^\sigma}\circ i_{\overline{\lieq}^\sigma}}(X)\otimes D_{\liek^\sigma}^*
@>>\eta_{\lieh^\sigma,q}>
L_{S_\liep+q}P_{i_{\liel^\sigma}\circ i_{\overline{\liep}^\sigma}}(Y)\otimes D_{\liel^\sigma}^*
$$
\end{CD}
$$
where $\eta_{\lieh^\sigma,q}$ is constructed with respect to the maps
$$
\eta_{\lieq^\sigma\cap \liep^\sigma}:\;\;\;\mathcal F_{k_{C^\sigma\cap K^\sigma}}(X_{\lieq^\sigma}^\sigma)\;\to\;\mathcal F^\vee_{k_{D^\sigma\cap L^\sigma}}(Y_{\liep^\sigma}^\sigma),
$$
$$
\left(\pi^*\right)^\sigma:\quad D_{\lieq^\sigma\cap\liep^\sigma}^*\to D_{\liel^\sigma}^*,
$$
and the integrals $dk^\sigma\in\Int_F(K^\sigma)$ and $dl^\sigma\in\Int_F(L^\sigma)$.
\end{theorem}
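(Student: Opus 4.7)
The strategy is to unpack the definition of $\eta_{\lieh,q}$ as a composition and verify that each individual constituent morphism commutes with the base change functor $-\otimes_{E,\sigma}F$. Since compositions and tensor products of morphisms which commute with $\sigma$ again commute with $\sigma$, the desired commutative square will follow. Recall that, by construction,
$$
\eta_{\lieh,q} \;=\; \bigl((\beta_2\circ\beta_4\circ P_{i_{\lieq\cap\liep}}\mathcal F_{k_{C\cap L}}(\eta_{\lieq\cap\liep})\circ\beta_3^{-1})\otimes\pi^*\bigr)\circ\mathcal D_{\lieg\cap\liel}^{-1}\circ\delta\circ\alpha_1\circ\mathcal D_{\lieg}.
$$

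I would dispose of the straightforward constituents first. The two Hard Duality isomorphisms $\mathcal D_\lieg$ and $\mathcal D_{\lieg\cap\liel}^{-1}$ commute with $\sigma$ by the commutative square in the statement of Theorem \ref{thm:hardduality}, once we use that $dk^\sigma\in\Int_F(K^\sigma)$ and $dl^\sigma\in\Int_F(L^\sigma)$ are exactly the integrals postulated; the naturality statement in Proposition \ref{prop:integraliso} ensures this is a consistent choice. The base change morphisms $\alpha_1$, $\beta_2$, $\beta_3^{-1}$, $\beta_4$ all commute with $\sigma$ by Lemma \ref{lem:rationalbasechange}, and their compatibility under $\sigma$ with the maps $\alpha_1^\sigma$, $\beta_j^\sigma$ attached to the $\sigma$-image of the diagram \eqref{diag:gtol} is precisely the content of that lemma. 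Applying $\mathcal F_{k_{C\cap L}}$ and $P_{i_{\lieq\cap\liep}}$ to $\eta_{\lieq\cap\liep}$ commutes with $\sigma$ by Corollary \ref{cor:forgetfulbasechange} and Theorem \ref{thm:basechangeinduction}, and by hypothesis the base change of $\eta_{\lieq\cap\liep}$ (respectively $\pi^*$) is $\eta_{\lieq^\sigma\cap\liep^\sigma}$ (respectively $(\pi^*)^\sigma$).

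The only delicate point is the $\delta$-functor morphism $\delta$ from \eqref{eq:deltamap}, which is the main obstacle. I would argue as follows: $\delta$ is the canonical morphism of cohomological $\delta$-functors from the universal $\delta$-functor $R^\bullet(I_{i_{\lieg\cap\liel}}\mathcal F_{j_{C\cap L}})$ to the $\delta$-functor $R^\bullet I_{i_{\lieg\cap\liel}} \circ \mathcal F_{j_{C\cap L}}$ (the latter is a $\delta$-functor because $\mathcal F_{j_{C\cap L}}$ is exact), uniquely determined by requiring the identity in degree $0$. The base change functor $-\otimes_{E,\sigma}F$ is exact, commutes with both derived functors involved by Theorem \ref{thm:basechangeinduction}, and transforms $\delta$-functor structures into $\delta$-functor structures. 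Therefore $\sigma\circ\delta$ is a morphism of $\delta$-functors (from the universal source base changed to $F$) which agrees with $\delta^\sigma$ in degree $0$ (both being the identity there). By the universality of $R^\bullet(I_{i_{\lieg^\sigma\cap\liel^\sigma}}\mathcal F_{j_{C^\sigma\cap L^\sigma}})$, this forces $\sigma\circ\delta=\delta^\sigma\circ\sigma$ in all degrees.

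Combining these compatibilities in the order prescribed by the composition above yields a commutative diagram with $\eta_{\lieh,q}$ in the top row and $\eta_{\lieh^\sigma,q}$ in the bottom row, connected by vertical arrows $\sigma$ between corresponding factors. The outer square of this diagram is exactly the square asserted in the theorem. The construction is visibly natural in the input data, so everything assembles to give the claim.
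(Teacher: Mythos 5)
Your proof takes essentially the same approach as the paper: decompose $\eta_{\lieh,q}$ into its constituent factors (the Hard Duality isomorphisms, the base change morphisms $\alpha_j$, $\beta_j$, the functors applied to $\eta_{\lieq\cap\liep}$ and $\pi^*$, and $\delta$), and invoke Theorem \ref{thm:hardduality}, Lemma \ref{lem:rationalbasechange}, Corollary \ref{cor:forgetfulbasechange}, and Theorem \ref{thm:basechangeinduction} to see that each piece commutes with $\sigma$. The only asymmetry is that the paper spends its care on verifying that the $\sigma$-image of diagram \eqref{diag:gtol} still satisfies conditions \eqref{cond:sum}, \eqref{cond:int}, \eqref{cond:num} with unchanged degrees and trivial dualizing modules (so that $\eta_{\lieh^\sigma,q}$ is well defined) and dismisses the compatibility of $\delta$ with $\sigma$ as obvious, whereas you do the opposite — a fuller universal-$\delta$-functor argument for $\delta$ (which, suitably phrased in terms of the effaceable $\delta$-functors $(R^\bullet(I\circ\mathcal F))^\sigma$ on $\mathcal C(\lieq,C)_E$, is correct), while relegating the well-definedness of $\eta_{\lieh^\sigma,q}$ to a one-line closing remark.
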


\begin{proof}
We may ignore the fact that the inclusions $E\to\CC$ may get mapped under $\sigma$ to a field which does not map into $\CC$, and that the image of $E^+$ under the functor $-\otimes_{E,\sigma}F$ is not well defined in general, since the field $E^+$ plays no role in the construction and is only used as a reference for the notion of $\theta$-stable parabolic pairs.

Given this, observe that applying the functor
$$
(-)^\sigma\;:=\;(-)\otimes_{E,\sigma}F
$$
to the diagram \eqref{diag:gtol} yields another diagram which satisfies the conditions \eqref{cond:sum}, \eqref{cond:int} and \eqref{cond:num}, and leaves the degrees $S_\lieq$, $S_\liep$, and $T_{\lieq\cap\liep}$ invariant, and the images of the dualizing modules under $(-)^\sigma$ are again trivial.

Therefore $\eta_{\lieh^\sigma,q}$ is well defined. It remains to show that it agrees with the image of $\eta_{\lieh,q}$ under $\sigma$ as claimed.

By Corollary \ref{cor:forgetfulbasechange} the forgetful functors commute with $\sigma$, by Theorem \ref{thm:basechangeinduction} the derived induction and production functors commute with $\sigma$, by Theorem \ref{thm:hardduality} Hard Duality commutes with $\sigma$, and by Lemma \ref{lem:rationalbasechange} the base change maps $\alpha_j$, $\beta_j$ commute with $\sigma$ as well. Therefore it only remains to remark that the map $\delta$ in \eqref{eq:deltamap} commutes with $\sigma$, which is obviously the case.

As a composition of base change maps, Hard Duality isomorphisms, and $\delta$, the cohomologically induced map $\eta_{\lieh,q}$ maps under $(\cdot)^\sigma$ to $\eta_{\lieh^\sigma,q}$ as claimed.
\end{proof}

\subsubsection{Restriction to the bottom layer}

One of the main features of $\eta_{\lieh,q}$ is that it is explicitly computable on the bottom layer due to functoriality of its construction. Restricting diagram \eqref{diag:gtol} to the underlying compact pairs we obtain the diagram
$$
\begin{CD}
(\liek,K)@<j_{K}<<(\liel,L)@>1>>(\liel,L)\\
@Ai_{\liek}AA @AAi_{\liek\cap \liel}A @AAi_{\liel}A\\
(\liek,C)@<j_{C\cap L}<<(\liel,C\cap D)@>j_{D}>>(\liel,D)\\
@Ai_{\lieq\cap\liek}AA @Ai_{\lieq\cap\liep}AA @AAi_{\liep\cap\liel}A\\
(\lieq\cap\liek,C)@<k_{C}<<(\lieq\cap\liep\cap\liel,C\cap D)@>k_{D}>>(\liep\cap\liel,D)\\
\end{CD}
$$
which again satisfies conditions \eqref{cond:sum}, \eqref{cond:int}, \eqref{cond:num} due to the $\theta$-stability of the involved parabolic subalgebras.

Now $\eta_{\lieq\cap\liep}$ induces a $(\lieq\cap\liep,C\cap D)$-map
$$
\eta_{\lieq\cap\liep}:\;\;\;\mathcal F_{k_{C}}(X)\;\to\;\mathcal F^\vee_{k_{D}}(Y),
$$
and we obtain by the construction in section \ref{sec:inducedmaps} a natural $(\liel,L)$-equivariant map
$$
\eta_{\liel,q}:\;
\mathcal F_{j_{K}}L_{S_\lieq+q}P_{i_\liek\circ i_{\overline{\lieq}\cap\liek}}(X)\otimes D_\liek^*
$$
$$
\;\to\;
L_{S_\liep+q}P_{i_\liel\circ i_{\overline{\liep}\cap\liek}}(Y)\otimes D_{\liel}^*.
$$
The bottom layer map is the canonical monomorphism
$$
\mathcal B_{\lieg,q}:\;\;\;
L_{S_\lieq+q}P_{i_\liek i_{\overline{\lieq}\cap\liek}}(X)\to 
\mathcal F_{(\liek,K)\to(\lieg,K)}
L_{S_\lieq+q}P_{i_\lieg i_{\overline{\lieq}}}(X),
$$
and similarly for $(\lieh,L)$. We remark that the bottom layer is natural in the base field $E$.
\begin{proposition}\label{prop:bottomlayer}
The restriction of $\eta_{\lieh,q}$ to the bottom layer equals $\eta_{\liel,q}$, and its image lies in the bottom layer again.
\end{proposition}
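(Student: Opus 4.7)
The plan is to exploit the fact that $\eta_{\lieh,q}$ was constructed entirely out of four kinds of building blocks, each of which is natural with respect to morphisms of commutative diagrams of pairs: the Hard Duality isomorphisms $\mathcal D_\bullet$, the base change morphisms $\alpha_j$ and $\beta_j$, the connecting map $\delta$ coming from the universality of right derived functors, and the elementary operations induced by $\eta_{\lieq\cap\liep}$ and $\pi^*$. The compact-pair version of diagram \eqref{diag:gtol} admits a canonical morphism into diagram \eqref{diag:gtol} itself via the inclusions of compact pairs $(\liek,K)\to(\lieg,K)$, $(\liel,L)\to(\lieh,L)$, $(\lieq\cap\liek,C)\to(\lieq,C)$, etc., and this morphism respects all of the subpair data entering the construction. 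The proposition then becomes a question of functoriality of $\eta_{\lieh,q}$ under this morphism of diagrams.

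I would first set up the naturality square for each building block separately. For the base change morphisms $\alpha_j$ and $\beta_j$ this is part of their defining universal property (they are induced by counits/units), so they are natural in morphisms of commutative squares of pairs. For $\delta$, which arises from the universal $\delta$-functor property, naturality in morphisms of pairs is automatic once the target and source are identified as right-derived functors and the relevant forgetful functors are exact. For $\mathcal D_\bullet$, naturality with respect to the inclusion of the compact pair follows from the compatibility of Hard Duality with restriction to compact pairs (the same compatibility invoked in the proof of Theorem \ref{thm:hardduality}, essentially the analogue of (6.43)--(6.44) of \cite{book_knappvogan1995}). Finally, the step $P_{i_{\lieq\cap\liep}}\mathcal F_{k_{C\cap L}}(\eta_{\lieq\cap\liep})\otimes\pi^*$ is unchanged between the two constructions because $\eta_{\lieq\cap\liep}$ and $\pi^*$ are the same data, and the dualizing modules $D_\liek$, $D_\liel$, $D_{\lieq\cap\liep}$ agree on the nose (they are defined purely in terms of the Levi Lie algebras).

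With these naturality squares assembled, I would compose them to obtain a commutative diagram whose outer arrows are precisely $\mathcal B_{\lieh,q}\circ\eta_{\liel,q}$ on one side and $\eta_{\lieh,q}\circ\mathcal B_{\lieg,q}$ on the other. Concretely, the bottom layer monomorphism $\mathcal B_{\lieg,q}$ is nothing but the map induced on $L_{S_\lieq+q}P$ by the inclusion of compact into full pairs, applied after $P_{i_{\overline\lieq}}$; since each of the eight intermediate functors commutes up to natural transformation with this inclusion, the composition does too. This yields the first assertion of the proposition. The second assertion is then immediate: $\eta_{\liel,q}$ takes values in $L_{S_\liep+q}P_{i_\liel\circ i_{\overline\liep\cap\liek}}(Y)\otimes D_\liel^*$, whose image under $\mathcal B_{\lieh,q}$ is by definition the bottom layer of the target of $\eta_{\lieh,q}$.

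The main obstacle I anticipate is the bookkeeping for the Hard Duality step together with the base change $\alpha_1$ and the $\delta$-functor map. One must verify that the identification of $L_\bullet P$ with $R^\bullet I$ via $\mathcal D_\lieg$ intertwines the bottom layer inclusion on the $L_\bullet P$ side with the analogous compact-pair inclusion on the $R^\bullet I$ side, and that both $\alpha_1$ and $\delta$ are compatible with these inclusions at the level of the standard Koszul complexes. Once this compatibility is spelled out on the resolution level (where all maps are explicit and the identifications of $D_\liek$, $D_\liel$, $D_{\lieq\cap\liep}$ with trivial modules are in place), the remaining verifications are formal and reduce to the naturality statements already packaged in Lemma \ref{lem:rationalbasechange} and Theorem \ref{thm:hardduality}.
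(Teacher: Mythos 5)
Your proposal is correct and takes essentially the same route as the paper's proof: the paper reduces the proposition to a commutative square, verifies it step by step, and notes that the only non-trivial checks concern Hard Duality, where it invokes exactly the compatibility (6.43)--(6.44) of Knapp--Vogan that you cite. Your argument is merely a more explicit spelling-out of the "step by step" verification.
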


\begin{proof}
The statement of the proposition amounts to the commutativity of the diagram
$$
\begin{CD}
L_{S_\lieq+q}P_{i_\liek i_{\overline{\lieq}\cap\liek}}(X)\otimes D_\liek^*
@>{\mathcal B_{\lieg,q}\otimes 1_{D_\liek^*}}>>
\mathcal F_{(\liek,K)\to(\lieg,K)}L_{S_\lieq+q}P_{i_\lieg i_{\overline{\lieq}}}(X)\otimes D_\liek^*\\
@V{\eta_{\liel,q}}VV @VV{\eta_{\lieh,q}}V\\
L_{S_\liep+q}P_{i_\liel i_{\overline{\liep}\cap\liel}}(Y)\otimes D_{\liel}^*
@>{\mathcal B_{\lieh,q}\otimes 1_{D_\liel^*}}>>
\mathcal F_{(\liel,L)\to(\lieh,L)}L_{S_\liep+q}P_{i_\liel i_{\overline{\liep}}}(Y)\otimes D_{\liel}^*
\end{CD}
$$
This may be verified step by step, and the only non-trivial cases are the ones involving hard duality. Yet we know from the proof of Proposition 6.35 in \cite{book_knappvogan1995}, i.e.\ (6.43) and (6.44) of loc.\ cit.\ that hard duality for $(\liek,K)$ is compatible with hard duality for $(\lieg,K)$, and smiliarly for $(\liel,L)$ and $(\lieh,L)$. This concludes the proof.
\end{proof}

\subsubsection{Duality}\label{sec:duality}

In this subsection we assume that we are in the general situation of section \ref{sec:inducedmaps}, and we assume additionally that $Y$ is admissible, which implies its reflexivity as a $(\liep,C\cap L)$-module. The images of $Y$ under the cohomological induction and production functors are again admissible and in particular reflexive.

By easy and hard duality
$$
L_{S_\liep+q}P_{i_\liel i_{\overline{\liep}}}
(Y)^\vee\;=\;
L_{S_\liep-q}P_{i_\liel i_{\liep}}
(Y^\vee)\otimes D_{\liel}^*.
$$
Due to the reflexivity of this module, and the triviality of $D_{\Delta(\liel)}^*$, we have a natural isomorphism
$$
\Hom_{\lieh,L}(
L_{
S_\lieq+q}P_{i_\lieg i_{\overline{\lieq}}}
(X)\otimes D_{\liek}^*
,
L_{S_\liep+q}P_{i_\liel i_{\overline{\liep}}}
(Y)\otimes D_{\liel}^*
)
\;\to\;
$$
$$
\Hom_{\Delta(\liel),\Delta(L)}(
L_{S_\lieq+q}P_{i_\lieg i_{\overline{\lieq}}}
(X)\otimes D_{\liek}^*
\otimes
L_{S_\liep-q}P_{i_\liel i_{\liep}}
(Y^\vee)\otimes D_{\liel}^*,
D_{\Delta(\liel)}^*
),
$$
for $\Delta$ the diagonal embedding. Under this isomorphism, after a choice of basis of $D_{\liel}^*$, the map $\eta_{\lieh,q}$ corresponds to a $(\Delta(\liel),\Delta(L))$-equivariant functional
$$
\eta_{\Delta(\liel)}:\;\;
L_{S_\lieq+q}P_{i_\lieg i_{\overline{\lieq}}}(X)\otimes D_{\liek}^*
\otimes_E
L_{S_\liep-q}P_{i_\liel i_{\liep}}(Y^\vee)\otimes D_{\liel}^*
\;\to\;D_{\Delta(\liel)}^*.
$$
The reductive pair
$$
(\lieg\times\lieh,K\times L),
$$
the parabolic pair
$$
(\lieq\times\overline{\liep},C\times D),
$$
satisfy, with respect to the reductive pair
$$(\Delta(\liel),\Delta(L)),$$
and its parabolic subpair
$$(\Delta(\liel),\Delta(L)),$$
the conditions \eqref{cond:sum}, \eqref{cond:int} and \eqref{cond:num}. If we let $X\otimes Y^\vee$ replace $X$ and the trivial module replace $Y$, we may produce a functional $\eta_{\Delta(\liel),0}$, that, up to a choice of basis of $D_{\liel}^*$ and pullback along the K\"unneth map
$$
L_{S_\lieq+q}P_{i_\lieg i_{\overline{\lieq}}}(X)\otimes D_{\liek}^*
\otimes_E
L_{S_\liep-q}P_{i_\liel i_{\liep}}(Y^\vee)\otimes D_{\liel}^*\;\to\;
L_{S_\lieq+S_\liep}P_{i_{\lieg\times\liel} i_{\overline{\lieq}\times\liep}}
(X\otimes_E Y^\vee\otimes D_{\liek\times\liel}^*),
$$
equals $\eta_{\Delta(\liel)}$. This reduces us to the situation where $\liep=\liel$, $D=L$, $S_\liep=0$ and $q=0$ whenever $Y$ is admissible.

Remark that if
$$
L_{S_\lieq+q}P_{i_\lieg i_{\overline{\lieq}}}(X)\otimes D_{\liek}^*
\otimes_E
L_{S_\liep-q}P_{i_\liel i_{\liep}}(Y^\vee)\otimes D_{\liel}^*\;=\;0
$$
for $q\neq 0$ then the K\"unneth map is an isomorphism for $q=0$.

\subsubsection{Composability}

If we are given two commutative diagrams of the form \eqref{diag:gtol}, where the last column of the first diagram equals the first column of the second diagram, and the inducing data is the same for this column in both diagrams, we may consider the composition of the two morphisms constructed via the two diagrams after choices of bases for the dualizing modules. We remark that in general the resulting composite morphisms may in general not be produced directly via a single diagram of the form \eqref{diag:gtol}.

\subsection{Explicit formulae}

In this section we give an explicit description of $\eta_{\lieh,q}$ on the level of complexes. For our purpose it is sufficient to restrict our attention to $K$-equivariant complexes.

\begin{proposition}\label{prop:explicitddeltaalphad}
As a $(\liel,L)$-morphism, the map
$$
\mathcal D_{\lieg\cap\liel}^{-1}\circ\delta\circ \alpha_1\circ \mathcal D_{\lieg}
$$
is, up to normalization of the integrals (Haar measures in the case $E=\RR$) $dk\in\Int_E(K)$ and $dm\in\Int_E(L)$ on $K$ and $L$ respectively\footnote{The duality maps $\mathcal D_{\lieg\cap\liel}$ and $\mathcal D_\lieg$ both depend on choices for $dk$ and $dm$, cf.\ section \ref{sec:hardduality}.}, given on the level of complexes by the restriction map
$$
\OO_E(K)dk\otimes_{R(C)}
(\bigwedge^{S_\lieq+q}(\liek/\liec)\otimes_E
P_{i_{\overline{\lieq}}}(X))\otimes_E
\bigwedge^{2S_\lieq}(\liek/\liec)^*\to
$$
$$
\OO_E(L)dm\otimes_{R(C\cap D)}
(\bigwedge^{S_\liep+q}(\liel/\liec\cap\lied)\otimes_E
P_{i_{\overline{\lieq}}}(X))\otimes_E
\bigwedge^{T_{\lieq\cap\liep}}(\liel/\liec\cap\lied)^*.
$$
induced by the canonical map $\OO_E(K)\to \OO_E(L)$ and the natural map
$$
\bigwedge^{S_\lieq+q}(\liek/\liec)\otimes_E
\bigwedge^{2S_\lieq}(\liek/\liec)^*=
\bigwedge^{S_\lieq-q}(\liek/\liec)^*
\;\to\;
$$
$$
\bigwedge^{S_{\lieq}-q}(\liel/\liec\cap\lied)^*=
\bigwedge^{S_\liep+q}(\liel/\liec\cap\lied)\otimes_E\bigwedge^{T_{\lieq\cap\liep}}(\liel/\liec\cap\lied)^*.
$$
\end{proposition}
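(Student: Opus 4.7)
The plan is to unpack each of the four morphisms $\mathcal D_\lieg$, $\alpha_1$, $\delta$, and $\mathcal D_{\lieg\cap\liel}^{-1}$ at the level of explicit standard (Koszul-style) complexes, exploit the explicit formulas for Hard Duality provided by the maps $\lambda_j$ in the proof of Theorem \ref{thm:hardduality}, and then verify by inspection that the composite reduces to the asserted restriction.

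Write $V:=P_{i_{\overline{\lieq}}}(X)$ for brevity. Under the identification $R_E(K)\cong\OO_E(K)\cdot dk$ of Proposition \ref{prop:integraliso}, the source complex for $L_{S_\lieq+q}P_{i_\lieg}(V)\otimes D_\liek^*$ is exactly the one displayed in the proposition, and $\mathcal D_\lieg$ is realised on it by the isomorphism $\lambda_{S_\lieq+q}$ constructed in the proof of Theorem \ref{thm:hardduality}. It lands in the standard injective-style complex $\Hom_C(R_E(K),\Hom_E(\bigwedge^{S_\lieq-q}(\liek/\liec),V))_K$ computing $R^{S_\lieq-q}I_{i_\lieg}(V)$. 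The analogous identification realises $\mathcal D_{\lieg\cap\liel}^{-1}$ on the target side as $\lambda_{S_\liep+q}^{-1}$ for the pair $(\lieg\cap\liel,L)$.

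On these injective-style complexes, the counit-induced base change $\alpha_1$ is concretely the natural restriction $\Hom_C(R_E(K),-)_K\to\Hom_{C\cap D}(R_E(L),-)_L$ along the Hecke subalgebra $R_E(L)\subseteq R_E(K)$. Since $\mathcal F_{j_{C\cap L}}$ is exact, applying it termwise to a standard Koszul resolution of $V$ as a $(\lieg,C)$-module produces one and the same complex computing both $R^\bullet(I_{i_{\lieg\cap\liel}}\mathcal F_{j_{C\cap L}})(V)$ and $R^\bullet I_{i_{\lieg\cap\liel}}(\mathcal F_{j_{C\cap L}}V)$, so the universal comparison $\delta$ acts as the identity at this explicit level. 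The composite $\mathcal D_{\lieg\cap\liel}^{-1}\circ\delta\circ\alpha_1\circ\mathcal D_\lieg$ therefore amounts to: apply $\lambda_{S_\lieq+q}$, restrict the resulting functional from $R_E(K)$ to $R_E(L)$, and invert $\lambda_{S_\liep+q}$.

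Tracing the defining formula $\lambda_j(fdk\otimes\xi\otimes v\otimes\varepsilon)(T)(\gamma)=\varepsilon(\xi\wedge\gamma)T(f^t)v$ through this chain shows that the restriction $T\mapsto T|_{R_E(L)}$ corresponds dually to the restriction of functions $\OO_E(K)\to\OO_E(L)$ on the first tensor factor, while the exterior factors transform via the Poincar\'e-type identifications
$$
\bigwedge^{S_\lieq+q}(\liek/\liec)\otimes\bigwedge^{2S_\lieq}(\liek/\liec)^*=\bigwedge^{S_\lieq-q}(\liek/\liec)^*
$$
and their counterpart for $(\lieg\cap\liel,L)$, with the transition between them induced by the natural inclusion $\liel/(\liec\cap\lied)\hookrightarrow\liek/\liec$. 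This reproduces precisely the map claimed in the proposition. The main obstacle will be the bookkeeping: the two duality isomorphisms depend on $dk$ and $dm$ independently, so the composite is only well-defined up to a proportionality that must be fixed once and for all; once such a normalization is chosen, the compatibility of the Poincar\'e isomorphisms with the reductions under hypotheses \eqref{cond:sum}, \eqref{cond:int}, \eqref{cond:num} reduces to a direct exterior-algebra calculation, which is the remaining detail I would carry out carefully.
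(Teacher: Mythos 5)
Your proposal follows essentially the same route as the paper: identify $R_E(K)\cong\OO_E(K)dk$ via the integral, realise the two Hard Duality isomorphisms by the explicit $\lambda_j$ maps from the proof of Theorem \ref{thm:hardduality}, describe $\alpha_1$ as pullback along $R_E(L)\to R_E(K)$, and trace the composite through the defining formula $\lambda_j(fdk\otimes\xi\otimes v\otimes\varepsilon)(T)(\gamma)=\varepsilon(\xi\wedge\gamma)T(f^t)v$. The one place where your bookkeeping diverges slightly from the paper's is in the treatment of $\delta$: the paper writes the composite $\delta\circ\alpha_1$ directly as a two-step map between the \emph{standard} injective-style complexes for $R^\bullet I_{i_\lieg}$ and $R^\bullet I_{i_{\lieg\cap\liel}}$, the second step being restriction along $U(\liel)\otimes_{U(\liec\cap\lied)}\bigwedge^q(\liel/\liec\cap\lied)\to U(\liek)\otimes_{U(\liec)}\bigwedge^q(\liek/\liec\cap\liel)$; you instead take $\delta$ to be the identity on the pushed-forward $(\lieg,C)$-complex and defer the exterior-algebra comparison to the step where you invert $\lambda_{S_\liep+q}$. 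Both decompositions describe the same composite; yours is valid provided you note that after setting $\delta=\mathrm{id}$ you are \emph{not} yet on the standard complex for $(\lieg\cap\liel,L)$, so the restriction along $\liel/(\liec\cap\lied)\hookrightarrow\liek/\liec$ is a genuine comparison morphism between two different complexes — exactly the ``remaining detail'' you flag — rather than part of the Poincar\'e identification itself. With that clarified, your plan and the paper's proof coincide in all essentials.
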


\begin{proof}
We identify $R_E(K)$ with $\OO_E(K)$ via our choice of integral (Haar measure) and similarly for the other reductive groups involved. On the level of complexes $\delta\circ \alpha_1$ is induced by the composition of the two maps
$$
\Hom_{C}(R_E(K),\Hom_E(U(\liek)\otimes_{U(\liec)}\bigwedge^q(\liek/\liec),U(\lieg)\otimes_{U(\overline{\lieq})} X)_{C})\to
$$
$$
\Hom_{C\cap D}(R_E(L),\Hom_E(U(\liek)\otimes_{U(\liec)}\bigwedge^q(\liek/\liec),U(\lieg)\otimes_{U(\overline{\lieq})} X)_{C})\to
$$
$$
\Hom_{C\cap D}(R_E(L),\Hom_E(U(\liel)\otimes_{U(\liec\cap\lied)}\bigwedge^q(\liel/\liec\cap\lied),U(\lieg)\otimes_{U(\overline{\lieq})} X)_{C\cap D}),
$$
where the first map is given by pullback along the natural map $R_E(L)\to R_E(K)$, the transpose of the map $\OO_E(K)\to\OO_E(L)$, and the second map is restriction along
$$
U(\liel)\otimes_{U(\liec\cap\lied)}\bigwedge^q(\liel/\liec\cap\lied)\;\to\;
U(\liek)\otimes_{U(\liec)}\bigwedge^q(\liek/\liec\cap\liel).
$$
The natural map
$$
U(\liel)\otimes_{U(\liec\cap\lied)}\bigwedge^q(\liel/\liec\cap\lied)^*\otimes_E U(\lieg)\otimes_{U(\overline{\lieq})} X\;\;\to\;\;
$$
$$
\Hom_E(U(\liel)\otimes_{U(\liec\cap\lied)}\bigwedge^q(\liel/\liec\cap\lied),U(\lieg)\otimes_{U(\overline{\lieq})} X)_{C\cap D}
$$
is an isomorphism, and analogously for the term corresponding to $\liek$. Application of hard duality first to complex given by the inner $\Hom_E$ under application of Proposition 5.90 of loc.\ cit., and then to the outer $\Hom_{C}$ and $\Hom_{C\cap D}$ concludes the proof.
\end{proof}

The map $\beta_3$  in the construction of $\eta_{\lieh,q}$ is the natural isomorphism
\begin{equation}
U(\liel)\otimes_{U(\overline{\lieq\cap\liep})} X
\to
U(\lieg)\otimes_{U(\overline{\lieq})} X,\;\;\;l\otimes x\mapsto l\otimes x,
\label{eq:beta3}
\end{equation}
and $\beta_4$ is given by the natural map
$$
U(\liel)\otimes_{U(\overline{\lieq\cap\liep})} Y
\to
U(\liel)\otimes_{U(\overline{\liep})} Y,\;\;\;l\otimes y\mapsto l\otimes y.
$$

On the level of complexes the map $\beta_2$ is given by the projection map
$$
R(L)\otimes_{R(C\cap D)}
(\bigwedge^{S_\liep+q}(\liel/\liec\cap\lied)\otimes_E
P_{i_{\overline{\liep}}}(Y))\;\to\;
$$
$$
R(L)\otimes_{R(D)}
(\bigwedge^{S_\liep+q}(\liel/\lied)\otimes_E
P_{i_{\overline{\liep}}}(Y)).
$$

Let us assume for a moment that $\liep=\liel$, then $D=L$. By section \ref{sec:duality} this is not a serious restriction, but simplifies the formulation considerably. Under these assumptions $S_\liep=0$ and thus only $q=0$ is of interest. Then
$$
(L_{0}P_{i_{\lieg\cap\liel}})
\mathcal F_{j_{D}}^\vee P_{i_{\overline{\liep}}}(Y)\;=\;
R(L)\otimes_{R(C\cap L)} Y,
$$
and
$$
L_{0}P_{i_{\liel}\circ i_{\overline{\liep}}}(Y)\;=\;Y.
$$
Since $D=L$, $Y$ is a rational $L$-module and decomposes into isotypic components
$$
Y\;=\;\bigoplus_j Y_j,
$$
and to each $Y_j$ corresponds a unique irreducible (but not necessarily absolutely irreducible) rational $L$-module $Z_j$ over $E$. To each such $Z_j$ then correponds a unique cosimple coalgebra $C_j\subseteq\OO_E(L)$, and an approximate unit $1_{C_j}\in R_E(L)$ that we simply denote by $1_j$. Each approximate identity $1_j$ acts as identity on $Z_j$ and hence on $Y_j$ and as zero on all other isotypic components.

Then each element $y\in R_E(L)\otimes_{R(C\cap L)} Y$ may be represented as a finite sum
\begin{equation}
y = \sum_{k}r_k\otimes y_k,\;\;\;\;\;\;r_k\in R_E(L),\;y_k\in Y_{j(k)},
\label{eq:ysum}
\end{equation}
for some index $j(k)$ depending on $k$.

\begin{proposition}[Counit formula]\label{prop:beta2}
If $\liep=\liel$, then $\beta_2$ is the counit
$$
P_{i_{\lieg\cap\liel}}\circ\mathcal F_{i_{\lieg\cap\liel}}^\vee
\;\to\;
{\bf1},
$$
i.e.\ for any $y\in R_E(L)\otimes_{R(C\cap L)}Y$ we have the explicit formula
$$
\beta_2(y)\;=\;
\sum_{k}r_k\cdot y_k\;=\;\sum_k(r_k\cdot 1_{j(k)})\cdot y_k\;\in\;Y
$$
in the notation of \eqref{eq:ysum}.
\end{proposition}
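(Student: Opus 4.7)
The plan is to argue that in the degenerate case $\liep = \liel$ the base change morphism $\beta_2$ attached to Square 2 of diagram \eqref{diag:gtol} coincides with the counit of the adjunction $P_\iota \dashv \mathcal F^\vee_\iota$, where $\iota\colon (\lieh, C\cap L)\to (\lieh, L)$ denotes the common reduction of $j_L$ and $i_{\lieg\cap\liel}$. First I would observe that when $\liep = \liel$ one has $D = L$, which forces the right vertical arrow $i_\liel$ of Square 2 to be the identity; the top horizontal arrow is the identity by construction; and both the bottom horizontal $j_L$ and the left vertical $i_{\lieg\cap\liel}$ collapse to~$\iota$.

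Next I would unwind the definition of $\beta$ from \eqref{eq:basechangebeta}, which dually to the construction of $\alpha$ realises $\beta$ as the $P_{i_\liep}\dashv \mathcal F^\vee_{i_\liep}$-adjoint of the image of the unit $Y\to\mathcal F^\vee_{i_\lieq}P_{i_\lieq}(Y)$ under the functor $\mathcal F^\vee_{j_L}$, the commutativity of the square providing the canonical identification $\mathcal F^\vee_{j_L}\mathcal F^\vee_{i_\lieq}=\mathcal F^\vee_{i_\liep}\mathcal F^\vee_{j_K}$ needed to pass to the adjoint. After aligning Square 2 with the orientation of the template square \eqref{eq:basechangesquare}, the template $i_\lieq$ corresponds to $i_\liel=\id$ and $j_K$ is also the identity; consequently the unit is the identity on $Y$, applying $\mathcal F^\vee_{j_L}=\mathcal F^\vee_\iota$ returns the identity of $\mathcal F^\vee_\iota(Y)$, and its adjoint under $P_\iota\dashv\mathcal F^\vee_\iota$ is by definition of the counit precisely the morphism $P_\iota\mathcal F^\vee_\iota\to\mathbf 1$ evaluated at $Y$. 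This proves the first assertion.

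The explicit formulae then follow by direct identification. For admissible $Y$ the completed forgetful $\mathcal F^\vee_\iota(Y)$ agrees with $Y$ as a $(\lieh, C\cap L)$-module, since $Y$ is already $L$-finite, hence $C\cap L$-finite; consequently $P_\iota\mathcal F^\vee_\iota(Y)=R_E(\lieh,L)\otimes_{R_E(\lieh,C\cap L)}Y$, whose underlying $R_E(L)$-module structure is $R_E(L)\otimes_{R_E(C\cap L)}Y$. The counit at $Y$ is then the action map $r\otimes y\mapsto r\cdot y$, yielding $\beta_2\bigl(\sum_k r_k\otimes y_k\bigr)=\sum_k r_k\cdot y_k$. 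The equivalent expression $\sum_k(r_k\cdot 1_{j(k)})\cdot y_k$ follows since $1_{j(k)}$ is the elementary idempotent acting as the identity on the isotypic component $Y_{j(k)}$, so $1_{j(k)}\cdot y_k=y_k$, and associativity of the $R_E(L)$-action gives $r_k\cdot y_k=(r_k\cdot 1_{j(k)})\cdot y_k$.

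The only conceptually nontrivial step is the categorical identification of $\beta_2$ with the counit, and the sole delicate point there is the bookkeeping required to match Square 2 against the orientation convention of the template square \eqref{eq:basechangesquare}. Once that is verified, the rest is a straightforward identification built from the explicit description of $P_\iota$, $\mathcal F^\vee_\iota$ and of the Hecke algebra $R_E(L)$.
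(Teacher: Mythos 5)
Your proof is correct and fills in the content behind the paper's terse ``This is formal'': unwinding \eqref{eq:basechangebeta} on Square~2, where $\liep=\liel$ forces $D=L$ and collapses $j_K$, $i_\liel$, and hence the unit $Y\to\mathcal F^\vee_{i_\liel}P_{i_\liel}(Y)$ to identities, indeed realises $\beta_2$ as the counit of $P_{i_{\lieg\cap\liel}}\dashv\mathcal F^\vee_{i_{\lieg\cap\liel}}$, and the formula is the evaluation map. One small imprecision to flag: the claim that $\mathcal F^\vee_\iota(Y)=Y$ for admissible $Y$ is not literally true, since $\mathcal F^\vee_\iota(Y)=\Hom_{R_E(\lieh,L)}(R_E(\lieh,L),Y)_{C\cap L}$ can be strictly larger than $Y$ when infinitely many $L$-types of $Y$ share a common $C\cap L$-type; the displayed formula really lives on $R_E(L)\otimes_{R(C\cap L)}Y$, i.e.\ on the image of the canonical monomorphism $P_\iota\mathcal F_\iota(Y)\hookrightarrow P_\iota\mathcal F^\vee_\iota(Y)$, where the counit restricts to the action map $r\otimes y\mapsto r\cdot y$. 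This matches the paper's own implicit usage and does not affect the validity of your argument.
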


\begin{proof}
This is formal.
\end{proof}

\begin{proposition}\label{prop:expliciteta}
If $\liep=\liel$, then on the level of complexes the map $\eta_{\lieh,0}$ is given, as an $(\liel,L)$-morphism, by the map
$$
\OO_E(K)\otimes_{R(C)}
(
\bigwedge^{S_\lieq}(\liek/\liec)\otimes
U(\lieg)\otimes_{U(\overline{\lieq})}X)\otimes
\bigwedge^{2S_\lieq}(\liek/\liec)^*
\;\to\;Y
,
$$
$$
f\otimes (u\otimes l\otimes x)\otimes \varepsilon
\;\;\mapsto\;\;
\pi^*(\varepsilon(u\wedge -))\cdot f|_L\cdot (l\cdot\eta_{\lieq\cap\liep}(x))
$$
with the elements in the corresponding factors, $f|_L\in\OO_E(L)$ the canonical image of $f\in\OO_E(K)$ under the restriction map $\OO_E(K)\to\OO_E(L)$, and $l\in U(\liel)\subseteq U(\lieg)$.
\end{proposition}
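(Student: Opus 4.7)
The plan is to trace the element $f\otimes(u\otimes l\otimes x)\otimes\varepsilon$ through each factor of the composition
$$
\eta_{\lieh,0}=\bigl((\beta_2\circ\beta_4\circ P_{i_{\lieq\cap\liep}}\mathcal F_{k_{C\cap L}}(\eta_{\lieq\cap\liep})\circ\beta_3^{-1})\otimes\pi^*\bigr)\circ\mathcal D_{\lieg\cap\liel}^{-1}\circ\delta\circ\alpha_1\circ\mathcal D_\lieg,
$$
using the explicit formulae already established for the constituent maps. Since $\liep=\liel$ forces $\lied=\liel$, $D=L$, $S_\liep=0$, and (by condition \eqref{cond:num}) $T_{\lieq\cap\liep}=S_\lieq$, the complex computing the target simplifies considerably and we work in degree $q=0$ throughout.

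First I apply Proposition \ref{prop:explicitddeltaalphad}, which identifies the composition $\mathcal D_{\lieg\cap\liel}^{-1}\circ\delta\circ\alpha_1\circ\mathcal D_\lieg$ on complexes (after fixing the integrals $dk$ and $dm$) with the restriction map sending $f\otimes(u\otimes l\otimes x)\otimes\varepsilon$ to $f|_L\otimes(1\otimes l\otimes x)\otimes\bigl(\varepsilon(u\wedge-)\bigr)|_{\liel/\liec\cap\liel}$ in the target complex $\OO_E(L)\otimes_{R(C\cap L)}(\bigwedge^0(\liel/\liec\cap\liel)\otimes P_{i_{\overline{\lieq}}}(X))\otimes\bigwedge^{S_\lieq}(\liel/\liec\cap\liel)^*$. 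Next, equation \eqref{eq:beta3} makes $\beta_3^{-1}$ the canonical identification $l\otimes x\mapsto l\otimes x$, now with $l\otimes x\in U(\liel)\otimes_{U(\overline{\lieq\cap\liep})}X$, and this is where the hypothesis $l\in U(\liel)\subseteq U(\lieg)$ in the statement enters.

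Then $P_{i_{\lieq\cap\liep}}\mathcal F_{k_{C\cap L}}(\eta_{\lieq\cap\liep})$ acts by naturality as $l\otimes x\mapsto l\otimes\eta_{\lieq\cap\liep}(x)$, and $\beta_4$ is the natural map $U(\liel)\otimes_{U(\overline{\lieq\cap\liep})}Y\to U(\liel)\otimes_{U(\overline{\liep})}Y$; because $\overline\liep=\liel$ when $\liep=\liel$, the target collapses to $Y$ via $l\otimes y\mapsto l\cdot y$. After these two steps, our element is $f|_L\otimes(l\cdot\eta_{\lieq\cap\liep}(x))\otimes\bigl(\varepsilon(u\wedge-)\bigr)|_{\liel/\liec\cap\liel}$ sitting in $\OO_E(L)\otimes_{R(C\cap L)}Y\otimes D^*_{\lieq\cap\liep}$. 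Applying $\pi^*$ on the last tensor factor gives the scalar $\pi^*(\varepsilon(u\wedge-))\in D^*_\liel=E$, while applying $\beta_2$ on the first two factors yields, by the counit formula of Proposition \ref{prop:beta2} (and the identification of $\OO_E(L)dm$ with $R_E(L)$ from Proposition \ref{prop:integraliso}), the element $f|_L\cdot(l\cdot\eta_{\lieq\cap\liep}(x))\in Y$. Multiplying the two outputs yields the claimed formula.

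The main potential obstacle is bookkeeping: keeping the two uses of Hard Duality $\mathcal D_\lieg$ and $\mathcal D_{\lieg\cap\liel}^{-1}$ consistently normalized against the chosen integrals, verifying that the natural exterior-algebra contraction $u\otimes\varepsilon\mapsto\varepsilon(u\wedge-)$ appearing implicitly in Proposition \ref{prop:explicitddeltaalphad} survives restriction from $\liek/\liec$ to $\liel/\liec\cap\liel$ unchanged, and ensuring that the counit identification in $\beta_2$ is compatible with the Hopf module decomposition of Proposition \ref{prop:integraliso}. All of these are formal once the corresponding reference results are invoked, so no further difficulty arises beyond confirming that the six individual explicit descriptions compose as prescribed.
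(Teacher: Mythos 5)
Your proposal is correct and follows the same route the paper takes: the paper's proof consists of the single remark that the formula follows from Proposition \ref{prop:explicitddeltaalphad} together with the preceding explicit descriptions of $\beta_3^{-1}$, $\beta_4$, and $\beta_2$ (the counit formula), which is exactly the composition you carry out step by step. Your more detailed tracing of the element through the six constituent maps is a faithful unwinding of the paper's terse \lq{}follows from the preceding discussion\rq{} argument.
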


We remark that we abuse the notation by describing the map without explicitly taking the relative tensor products over $R(C)$ and $U(\overline{\lieq})$ into account.

\begin{proof}
By Proposition \ref{prop:explicitddeltaalphad} this follows from our preceeding discussion.
\end{proof}

If $Y$ is one-dimensional, the sum \eqref{eq:ysum} collapses a single summand, and we obtain

\begin{corollary}\label{cor:expliciteta}
If $\liep=\liel$ and $Y$ is one-dimensional, then for a suitable choice of $dk\in\Int_{E^+}(K)$ and $dl\in\Int_{E^+}(L)$ the map $\eta_{\lieh,0}$ is on the level of complexes explicitly given by
$$
f\otimes (u\otimes l\otimes x)\otimes\varepsilon
\;\mapsto\;
\pi^*(\varepsilon(u\wedge -))\cdot
(f|_L\cdot 1_Y)
\cdot
(l\cdot\eta_{\lieq\cap\liep}(x)),
$$
where $1_Y\in R_E(L)$ denotes the approximate identity corresponding to the isomorphism class of $Y$.
\end{corollary}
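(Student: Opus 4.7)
The plan is to derive the stated formula as a direct specialization of Proposition \ref{prop:expliciteta} combined with the counit description of $\beta_2$ in Proposition \ref{prop:beta2}, using that a one-dimensional $Y$ has a single isotypic component.

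First I would recall that Proposition \ref{prop:expliciteta} already gives the formula
$$
f\otimes(u\otimes l\otimes x)\otimes\varepsilon\;\mapsto\;\pi^*(\varepsilon(u\wedge -))\cdot f|_L\cdot(l\cdot\eta_{\lieq\cap\liep}(x)),
$$
where the right hand side is interpreted modulo the relative tensor products, and where the action of $f|_L\in\OO_E(L)\cong R_E(L)$ on the target is the residual action after the counit $\beta_2$ has been applied. So the only point left to make explicit is how the factor $f|_L$ acts on an element of $Y$ in the one-dimensional case.

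Next I would invoke Proposition \ref{prop:beta2}. Since $Y$ is one-dimensional, it constitutes a single isotypic component of itself, say with approximate identity $1_Y\in R_E(L)$ corresponding to the isomorphism class of $Y$. Thus the sum \eqref{eq:ysum} collapses to a single term $r\otimes y$, and the counit formula reads
$$
\beta_2(r\otimes y)\;=\;r\cdot y\;=\;(r\cdot 1_Y)\cdot y.
$$
Applying this with $r=f|_L$ and $y=l\cdot\eta_{\lieq\cap\liep}(x)$ (interpreted as an element of $Y$ after identifying $\lieh=\liel+\overline{\liep}$ and using the triviality of the $\lieu^-$-action on $X$ carried through $\eta_{\lieq\cap\liep}$) yields precisely the claimed formula. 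The factor $\pi^*(\varepsilon(u\wedge-))$ and the intermediate maps $\beta_3^{-1}$, $\beta_4$ are unaffected by this step since they do not involve the counit at all.

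Finally I would address the phrase \emph{suitable choice} of $dk$ and $dl$. The duality isomorphisms $\mathcal D_\lieg$ and $\mathcal D_{\lieg\cap\liel}$ in the definition of $\eta_{\lieh,0}$ each depend linearly on the fixed integrals (cf.\ Theorem \ref{thm:hardduality}); scaling $dk$ and $dl$ simultaneously scales the isomorphism $R_E(K)\cong\OO_E(K)dk$ (and similarly for $L$) from Proposition \ref{prop:integraliso}. Proposition \ref{prop:explicitddeltaalphad} makes the dependence explicit up to a scalar coming from the normalization of the restriction $\OO_E(K)\to\OO_E(L)$ under the identification $R_E(K)\cong\OO_E(K)dk$ and $R_E(L)\cong\OO_E(L)dl$. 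Choosing $dk$ and $dl$ so that this scalar is $1$ (which is possible since both $\Int_E(K)$ and $\Int_E(L)$ are one-dimensional) gives the formula displayed in the corollary without any extraneous factor. I do not expect any genuine obstacle here; the only care needed is bookkeeping of the normalizations of the two integrals, which is routine once Proposition \ref{prop:expliciteta} and Proposition \ref{prop:beta2} are in place.
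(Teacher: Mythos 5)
Your proposal is correct and follows essentially the same route as the paper: the paper's own proof is simply the remark that for one-dimensional $Y$ the sum \eqref{eq:ysum} collapses to a single term, so the counit formula of Proposition~\ref{prop:beta2} specializes to $\beta_2(r\otimes y)=(r\cdot 1_Y)\cdot y$, which inserted into Proposition~\ref{prop:expliciteta} gives the displayed formula. Your additional remarks on the normalization of $dk$ and $dl$ make explicit what the paper leaves implicit (via ``suitable choice''); the one small inaccuracy is that you cite $\Int_E(K)$ and $\Int_E(L)$ rather than the $E^+$-rational spaces $\Int_{E^+}(K)$, $\Int_{E^+}(L)$ as in the statement, but since these are one-dimensional over $E^+$ and base-change to the $E$-spaces this does not affect the argument.
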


\subsection{A non-vanishing criterion}\label{sec:nonvanishingcriterion}

The following Theorem is a generalization of Proposition 2.9 in \cite{sunpreprint2}.

\begin{theorem}\label{thm:nonvanishing}
In the general situation, i.e.\ with $(\lieg,K)$, $(\lieh,L)$, $\lieq$, $\liep$, satisfying \eqref{cond:sum}, \eqref{cond:int}, and \eqref{cond:num}, and $X$ arbitrary and $Y$ admissible, and assume that $X\otimes\bigwedge^{\dim\lieu}\overline{\lieu}$ and $Y\otimes\bigwedge^{\dim\liev}\overline{\liev}$ have infinitesimal characters within the fair range respectively. Then the map
$$
\sum_{q\in\ZZ} \eta_{\lieh,q}\;:\;\;\;
\bigoplus_{q\in\ZZ}
\mathcal F_{j_{K}}L_{S_\lieq+q}P_{i_\liek\circ i_{\overline{\lieq}\cap\liek}}(X)
\otimes D_\liek^*
$$
$$
\;\to\;
\bigoplus_{q\in\ZZ}
L_{S_\liep+q}P_{i_\liel\circ i_{\overline{\liep}\cap\liek}}(Y)\otimes D_\liel^*.
$$
is non-zero on the bottom layer if and only if there exists an $x\in X$ with the following three properties:
\begin{itemize}
\item[(i)] $x$ generates an irreducible $C$-subrepresentatation $\xi\subseteq X$,
\item[(ii)] $\eta_{\lieq\cap\liep}(x)$ generates a non-zero $D$-subrepresentation $\varrho\subseteq Y$,
\item[(iii)] the natural map
$$
\Hom_{L}(
L_{S_\liep}P_{i_\liel i_{\overline{\liep}\cap\liel}}\varrho,
\mathcal F_{L\to K}L_{S_\lieq}P_{i_\liek i_{\overline{\lieq}\cap\liek}}\xi)
\;\to\;
$$
$$
\Hom_E(
H^0(\liev\cap\liel;
L_{S_\liep}P_{i_\liel i_{\overline{\liep}\cap\liel}}\varrho),
H_0(\overline{\lieu}\cap\liek;L_{S_\lieq}P_{i_\liek i_{\overline{\lieq}\cap\liek}}\xi))
$$
is non-zero.
\end{itemize}
\end{theorem}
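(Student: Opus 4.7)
The plan is to reduce the question from $(\lieg,K)$ to the compact-pair picture via Proposition \ref{prop:bottomlayer}, decompose along isotypic components, and match the resulting finite-dimensional Hom computation to condition (iii) using the algebraic Borel-Bott-Weil Theorem \ref{thm:borelbottweil}. By Proposition \ref{prop:bottomlayer}, the restriction of $\sum_q \eta_{\lieh,q}$ to the bottom layer coincides with $\sum_q \eta_{\liel,q}$ and has image again in the bottom layer, so it suffices to analyse the $(\liel,L)$-morphism
$$
\sum_{q\in\ZZ}\eta_{\liel,q}:\;\bigoplus_{q\in\ZZ}\mathcal F_{j_K}L_{S_\lieq+q}P_{i_\liek\circ i_{\overline{\lieq}\cap\liek}}(X)\otimes D_\liek^*\to\bigoplus_{q\in\ZZ}L_{S_\liep+q}P_{i_\liel\circ i_{\overline{\liep}\cap\liek}}(Y)\otimes D_\liel^*.
$$

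Next I would decompose $X=\bigoplus_\xi X_\xi$ and $Y=\bigoplus_\varrho Y_\varrho$ into $C$- resp.\ $D$-isotypic components. Since derived Bernstein functors commute with direct sums, Theorem \ref{thm:borelbottweil} identifies the bottom layer contributions: for each irreducible $C$-type $\xi$ in $X$ whose highest weights are dominant for the Borel containing $\overline{\lieu}\cap\liek$, the $K$-module $L_{S_\lieq}P_{i_\liek i_{\overline{\lieq}\cap\liek}}(\xi)$ is irreducible with $H_0(\overline{\lieu}\cap\liek;L_{S_\lieq}P(\xi))\cong\xi$ and $L_{S_\lieq+q}P(\xi)=0$ for $q\neq 0$; analogously for $\varrho$ and $L_{S_\liep+q}P(\varrho)$. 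The fair range hypothesis on $X\otimes\bigwedge^{\dim\lieu}\overline{\lieu}$ and $Y\otimes\bigwedge^{\dim\liev}\overline{\liev}$ propagates these irreducibility and vanishing statements to the noncompact modules into which the bottom layers inject, so only the $q=0$ components of $\sum_q \eta_{\liel,q}$ can contribute.

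For $(\Leftarrow)$, given $x$ satisfying (i)--(iii), Proposition \ref{prop:expliciteta} gives an explicit complex-level formula for $\eta_{\liel,0}$. Restricting to the $\xi$-summand, this formula sends the bottom layer copy of $L_{S_\lieq}P_{i_\liek i_{\overline{\lieq}\cap\liek}}(\xi)$ into the $(\liel,L)$-submodule of $L_{S_\liep}P_{i_\liel i_{\overline{\liep}\cap\liel}}(Y)$ generated by $\eta_{\lieq\cap\liep}(x)$, i.e.\ into $L_{S_\liep}P_{i_\liel i_{\overline{\liep}\cap\liel}}(\varrho)$. Passing to $\liev\cap\liel$-cohomology on the target and $\overline{\lieu}\cap\liek$-homology on the source and invoking Theorem \ref{thm:borelbottweil} realises the adjoint of $\eta_{\liel,0}|_\xi$ as an element of
$$
\Hom_E\bigl(H^0(\liev\cap\liel;L_{S_\liep}P_{i_\liel i_{\overline{\liep}\cap\liel}}\varrho),\,H_0(\overline{\lieu}\cap\liek;L_{S_\lieq}P_{i_\liek i_{\overline{\lieq}\cap\liek}}\xi)\bigr)
$$
whose non-vanishing is exactly the content of (iii). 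For $(\Rightarrow)$, nonvanishing of $\eta_{\liel,0}$ localises by the isotypic decomposition to some pair $(\xi,\varrho)$; the explicit formula then forces $\eta_{\lieq\cap\liep}$ to send some $x\in\xi$ to a nonzero element of $Y$, producing $\varrho$ and the required nonzero map in (iii).

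The main obstacle is the bookkeeping in the third paragraph: verifying that the explicit formula of Proposition \ref{prop:expliciteta} truly realises the natural map in (iii) after chasing through the Hard Duality isomorphisms, the base change maps $\alpha_1,\beta_2,\beta_3,\beta_4,\delta$ and the Borel-Bott-Weil identifications. A related subtlety is that $\xi$ and $\varrho$ may fail to be absolutely irreducible, so the argument must rely on the non-split part of Theorem \ref{thm:borelbottweil} together with the rationality statement Proposition 1.1 of \cite{januszewskipreprint}, rather than on naive highest-weight arguments over an algebraic closure.
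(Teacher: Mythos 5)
Your proposal correctly captures the outermost skeleton of the argument: reduce to the compact bottom layer via Proposition~\ref{prop:bottomlayer}, localize by Borel--Bott--Weil, and match the explicit complex-level formula to condition (iii). But there is a genuine gap at the second reduction step, and it is not the bookkeeping obstacle you identified at the end.

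The concrete problem is that Proposition~\ref{prop:expliciteta} (and Corollary~\ref{cor:expliciteta}, which is what the paper actually uses) is only stated and proved under the hypothesis $\liep=\liel$, $D=L$. Your proof appeals directly to Proposition~\ref{prop:expliciteta} while keeping $(\liep,D)$ general and merely decomposing $Y$ into $D$-isotypic components. That decomposition does not put you into the case $\liep=\liel$: after decomposing, each $\varrho$ is still a module for a possibly proper parabolic pair, and you still have $S_\liep>0$ in general, so the explicit formula you want to invoke is unavailable. What is missing is the duality reduction of section~\ref{sec:duality}: one passes from the $(\lieh,L)$-equivariant map into $L_{S_\liep+q}P_{i_\liel i_{\overline{\liep}}}(Y)\otimes D_\liel^*$ to an equivariant \emph{functional} by replacing $X$ with $X\otimes Y^\vee$ and $Y$ with the trivial module, using admissibility and reflexivity of $Y$. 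This is what lands you in the case $\liep=\liel$, $D=L$, $S_\liep=0$, $q=0$, one-dimensional target, exactly where Corollary~\ref{cor:expliciteta} applies. Without it, the chain of maps you want to chase through never collapses to a tractable expression, and the claim that the image of the $\xi$-summand lands in $L_{S_\liep}P_{i_\liel i_{\overline{\liep}\cap\liel}}(\varrho)$ has no supporting mechanism.

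Two smaller confusions. First, after restriction to the bottom layer everything happens for compact pairs, and the vanishing of $L_{S_\lieq+q}P$ for $q\neq 0$ follows there directly from the algebraic Borel--Bott--Weil Theorem~\ref{thm:borelbottweil}; the fair-range hypothesis is what controls the behaviour of the \emph{non-compact} derived functors (in particular, guaranteeing the K\"unneth map in section~\ref{sec:duality} is an isomorphism), and you are conflating the two. Second, the precise matching with (iii) in the paper proceeds by Frobenius reciprocity for $\mathcal F_{j_K}$ and $I_{j_K}$ to rewrite the $\Hom$-set as a $C$-equivariant $\Hom$ into $H^{S_\lieq}(\overline\lieu;I_{j_K}Y)$, followed by a Poincar\'e duality argument on the cohomology class $r(f)(x\otimes\varepsilon)$; your sketch of ``passing to $\liev\cap\liel$-cohomology on the target and $\overline\lieu\cap\liek$-homology on the source'' is a plausible alternative but would still need the duality reduction first to make the explicit formula available. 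The remark you make about non-absolutely-irreducible $\xi,\varrho$ and the non-split part of Theorem~\ref{thm:borelbottweil} is well taken and does match the paper's setup.
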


Another way to phrase condition (iii) is the following.
Remark that
$$
Z\;:=\;1_Y\cdot \mathcal F_{L\to K}L_{S_\lieq}P_{i_\liek i_{\overline{\lieq}\cap\liek}}\xi
$$
is the union of the images of all $L$-equivariant maps
$$
L_{S_\liep}P_{i_\liel i_{\overline{\liep}\cap\liel}}\varrho
\;\;\;\to\;\;\;
\mathcal F_{L\to K}L_{S_\lieq}P_{i_\liek i_{\overline{\lieq}\cap\liek}}\xi.
$$
Then (iii) is equivalent to saying that the natural map
$$
H^0(\liev\cap\liel;Z)\;\to\;
H_0(\overline{\lieu}\cap\liek;L_{S_\lieq}P_{i_\liek i_{\overline{\lieq}\cap\liek}}\xi)
$$
has non-zero image.

\begin{proof}
Proposition \ref{prop:bottomlayer} reduces us to the situation where $\lieg=\liek$, $\liel=\liel$, and the compatibility with duality (cf.\ section \ref{sec:duality}), reduces us further to the case $\liep=\liel$, $D=L$, $q=0$, $S_\liep=0$, and one-dimensional $Y$, as the statement of the theorem is compatible with duality via the natural isomorphism
$$
H_0(\liev\cap\liel;L_{S_\liep}P_{i_\liel i_{\liep}}(Y^\vee))\;\cong\;
H^0(\liev\cap\liel;D_{\liel}\otimes L_{S_\liep}P_{i_\liel i_{\overline{\liep}}}(Y))^\vee.
$$
Then
$$
S_\lieq\;=\;T_{\lieq\cap\liep}\;=\;\dim\liel/\liec\;=\;\dim\lieu.
$$
Then $D_\liel^*=E$ canonically for trivial reasons and $D_\liek^*$ is a trivial $(\lieg,C)$-module, that we may consider as a trivial $(\lieg,K)$-module without loss of generality, and as a trivial $(\lieq,C)$-module as well.

The adjointness relation for $\mathcal F_{j_K}$ and $I_{j_K}$ and the algebraic Borel-Bott-Weyl Theorem (cf.\ Theorem \ref{thm:borelbottweil}), the $\Hom$-set
$$
\Hom_{L}(
\mathcal F_{L\to K}
L_{S_\lieq}P_{i_\liek i_{\overline{\lieq}}}
X\otimes D_\liek^*,
Y)
$$
is naturally isomorphic to
\begin{equation}
\begin{CD}
\Hom_{K}(
L_{S_\lieq}P_{i_\liek i_{\overline{\lieq}}}X\otimes D_\liek^*,
I_{j_K}
Y)@>\sim>>
\end{CD}
\label{iso:bbw}
\end{equation}
$$
\Hom_{C}(
X\otimes D_\liek^*,
H^{S_\lieq}(\overline{\lieu};
I_{j_K}
Y)).
$$
The isomorphism \eqref{iso:bbw} is on the level of complexes induced by the map which associates to
$$
f:\;\;\;
R_E(K)\otimes_{R_E(C)}
\left(
\bigwedge^{S_\lieq}(\liek/\liec)\otimes_E
U(\liek)\otimes_{U(\overline{\lieq})} X
\right)\otimes D_\liek^*
\;\to\;
R_E(K)\otimes_{R_E(L)}
Y
$$
the map
$$
r(f)\;:\;\;\;X\otimes D_\liek^*\;\to\; \Hom_E(\bigwedge^{S_\lieq}\overline{\lieu},\;R_E(K)\otimes_{R_E(L)}
Y),
$$
$$
x\otimes\varepsilon\;\;\;\mapsto\;\;\;\left[\;u\;\mapsto\;
\sum_{
Z\in\widehat{C}_E
}
f(1_{L_{S_\lieq}P_{i_\liek i_{\overline{\lieq}}}(Z)}\otimes
u\otimes 1\otimes
(1_Z\cdot x)\otimes\varepsilon)\;\right].
$$
Note that the approximate identity $1_{L_{S_\lieq}P_{i_\liek i_{\overline{\lieq}}}(Z)}$ is well defined, since $L_{S_\lieq}P_{i_\liek i_{\overline{\lieq}}}(Z)$ is either irreducible or $0$ by the Borel-Bott-Weyl Theorem. This also shows that there are only finitely many non-zero summands. Furthermore by Poincar\'e duality the cohomology class associated to $r(f)(x\otimes\varepsilon)$ is non-zero if and only if for some $Z\in\widehat{C}_E$ the element
$$
f(1_{L_{S_\lieq}P_{i_\liek i_{\overline{\lieq}}}(Z)}\otimes
u\otimes 1\otimes
(1_Z\cdot x)\otimes\varepsilon)\;\in\;
$$
$$
L_{S_\lieq}P_{i_\liek i_{\overline{\lieq}}}(Z)\otimes_E
(L_{S_\lieq}P_{i_\liek i_{\overline{\lieq}}}(Z)^\vee\otimes_L Y)
\;\subseteq\;
R_E(K)\otimes_{R_E(L)}Y
$$
contributes to
$$
H_0(\overline{\lieu};L_{S_\lieq}P_{i_\liek i_{\overline{\lieq}}}(Z)).
$$

By Corollary \ref{cor:expliciteta} the map corresponding to $\eta_{\liel,0}$ under $r$ is the map that sends
$$
x\otimes \varepsilon\;\in\;
D_\liek^*\otimes X
$$
to
$$
u\;\mapsto\;
\pi^*(\varepsilon(u\wedge-))\cdot\!\!
\sum_{Z\in\widehat{C}_E}
1_{L_{S_\lieq}P_{i_\liek i_{\overline{\lieq}}}(Z)}\otimes
(1_{L_{S_\lieq}P_{i_\liek i_{\overline{\lieq}}}(Z)}|_L \cdot 1_Y)
\cdot
\eta_{\lieq\cap\liep}(1_Z\cdot x).
$$
For $u\neq 0$ and $\varepsilon\neq 0$ we have always $\pi^*(\varepsilon(u\wedge-))\neq 0$. Therefore the non-vanishing of the above cohomology class is equivalent to the existence of an $x\in X$ with $\eta_{\lieq\cap\liep}(x)\neq 0$, generating an irreducible $C$-representation $\xi\subseteq X$ with the property that the $L$-submodule
$
1_Y\cdot \mathcal F_{L\to K}L_{S_\lieq}P_{i_\liek i_{\overline{\lieq}}}(\xi)
$
contributes non-trivially to $H_0(\overline{\lieu};L_{S_\lieq}P_{i_\liek i_{\overline{\lieq}}}(\xi))$.
\end{proof}

\section{Algebraic setup}

In this section we specialize the general notation from the preceeding sections for our application to special values of $L$-functions.

\subsection{Algebraic groups}

For any natural number $m$ we set
$$
G_{m}\;=\;\res_{F/\QQ}(\GL_{m}).
$$
Then $G_m$ is a quasi-split reductive linear algebraic group over $\QQ$.

In the remaining sections we fix a natural number $n\geq 1$ and consider
$$
G\;:=\;G_{2n},
$$
and denote by
$$
H\;\subseteq\; G
$$
a copy of $G_n\times G_n$ embedded via the diagonal embedding
$$
H\to G,\quad(h_1,h_2)\mapsto \begin{pmatrix}h_1&\\&h_2\end{pmatrix}.
$$

In the sequel we make frequent use of the following convention. A (quasi-)character $\chi$ of $G_n$ or of $G_n(A)$ for a $\QQ$-algebra $A$ gives rise to a (quasi-) character of $H$ via the rule
\begin{equation}
G_n\times G_n\ni (h_1,h_2)\;\mapsto\;\mathcal \chi(h_2^{-1}h_1),
\label{eq:characterpullback}
\end{equation}
i.e.\ via composition with the map
$$
G_n\times G_n\to G_n,\quad (h_1,h_2)\mapsto h_2^{-1}h_1.
$$
We call a (quasi-)character of $H$ resp.\ $H(A)$ which arises in this way {\em admissible}.

We have a decomposition into a quasi-product
\begin{equation}
G_1\;=\;\res_{F/\QQ}(\GL_1)\;=\;G^{\rm s}_1\cdot G^{\rm an}_1,
\label{eq:rationaltorusdecomposition}
\end{equation}
where $G^{\rm s}_1$ is the maximal $\QQ$-split torus and $G^{\rm an}_1$ is the maximal $\QQ$-anisotropic subtorus in $G_1$. The latter is a quasi-complement of $G^{\rm s}_1$ in $G_1$, i.e.
$$
G^{\rm s}_1\cap G^{\rm an}_1\quad\text{is finite}.
$$
The projection
$$
p_{\rm s}:\quad G_1\to G_1/G^{\rm an}_1\cong\GL_1
$$
corresponds to the Norm character
$$
N_{F/\QQ}:\quad F^\times\to\QQ^\times,
$$
and its composition with the determinant induces a character
$$
\mathcal N:=p_{\rm s}\circ\res_{F/\QQ}\det:\quad G_n\to\GL_1.
$$
We consider $\mathcal N$ as a character of $H$ via the rule \eqref{eq:characterpullback}, i.e.
$$
h\;\mapsto\;
\mathcal N(h_2^{-1}h_1),
$$
that we simply call the {\em norm character} on $H$. To describe its behavior on the real points, we introduce for each archimedean place $v$ of $F$ the local {\em sign character}
$$
\sgn_v:\quad \GL_n(F_v)\;\to\;\RR^\times,\quad h_v\;\mapsto\; \frac{\det(h_v)}{|\det(h_v)|_v}.
$$
By abuse of notation we also consider $\sgn_v$ and the norm $|\cdot|_v$ (implicitly composed with the determinant) as a character of $G_n(\RR)$. Recall that $F$ is totally real. Then on real points we have the sign character
$$
\sgn_\infty:=\otimes_v \sgn_v:\quad G_n(\RR)\;\to\;\RR^\times,
$$
and the archimedean norm character
$$
|\cdot|_\infty:=\otimes_v |\cdot|_v:\quad G_1(\RR)\to\RR^\times.
$$
We obtain that for all $h\in H(\RR)$
\begin{equation}
\mathcal N(h)\;=\;
\sgn_\infty(h_2^{-1}h_1)\cdot|h_2^{-1}h_1|_\infty.
\label{eq:algebraicarchimedeannorm}
\end{equation}
We remark that the group $\Hom(G_n(\RR),\CC^\times)$ of quasi-characters of $G_n(\RR)$ is, as a complex manifold, a disjoint union of
$$
2^{r_F}\;=\;\#\pi_0(G_n(\RR))
$$
copies of $\CC$. Each component corresponds uniquely to a finite order character of $G_n(\RR)$. Each such character is of the form
$$
\sgn_\infty^\delta:=\otimes_v \sgn_v^{\delta_v}:\quad G_n(\RR)\to\CC^\times,
$$
with
$$
\delta=(\delta_v)_v\;\in\;\prod_v\{0,1\}.
$$
Then the component of $\sgn_\infty^\delta$ is parametrized by the charts
\begin{equation}
\CC\ni s\;\mapsto\;\omega^\delta_s\;:=\;\sgn_\infty^\delta\otimes|\cdot|_\infty^s\in\Hom(G_n(\RR),\CC^\times).
\label{eq:quasicharacterchart}
\end{equation}
We call $\omega_s^\delta$ {\em algebraic} whenever $s\in\ZZ$. If $\chi$ is a quasi-character of $G_n(\RR)$, we set for $k\in\ZZ$
$$
\chi[k]\;:=\;\chi\otimes\left(\mathcal N^{\otimes k}\right).
$$
If $\chi$ is algebraic, then so is $\chi[k]$. As before we consider these characters as admissible characters of $H(\RR)$ as well.

We fix a non-trivial continuous character $\psi:F\backslash\Adeles_F\to\CC^\times$, and all Gau\ss{} sums we consider are understood with respect to $\psi$, and normalized in such a way that when $\chi=\chi'\otimes|\cdot|_{\Adeles_F}^{k(\chi)}$, with $\chi'$ of finite order, then
$$
G(\chi)\;:=\;
G(\chi')\;:=\;
\sum_{x{\rm mod}\mathfrak{f}_{\chi'}}
\chi'\left(\frac{x}{f_{\chi'}}\right)
\psi\left(\frac{x}{f_{\chi'}}\right),
$$
where $\mathfrak{f}_{\chi'}=\mathcal O_F\cdot f_{\chi'}$ is the conductor of the finite order character $\chi':F^\times\backslash\Adeles_F^\times$.

\subsection{Rational models of compact groups}

For each $m$ we fix a $\QQ$-form $K_m\subseteq G_m$ of the standard maximal compact subgroup of $G_m(\RR)$, as in section 1.2 of \cite{januszewskipart1}. We recall that
$$
K_m\;=\;\res_{F/\QQ}\Oo(m)
$$
with the \lq{}standard form\rq{} of $\Oo(m,F\otimes\RR)$ over $F$, corresponding to the Cartan involution
$$
\theta_m:\quad \res_{F/\QQ}\GL_m\to \res_{F/\QQ}\GL_m,\quad \res_{F/\QQ}\left(g\mapsto {}^tg^{-1}\right).
$$
Then $K_m$ is split over $E_K:=\QQ(\sqrt{-1})$ and
\begin{equation}
K_m(\RR)\;=\;
\prod_{v\mid\infty}\Oo(m,F_v)
\label{eq:klocaldecomposition}
\end{equation}
in a natural way.

We remark that the connected component $K_m^0$ of the identity of $K_m$ is geometrically connected and defined over $\QQ$. Furthermore we have a natural isomorphism
$$
\pi_0(K_m)\;=\;\pi_0(K_m(\RR)),
$$
where $K_m(\RR)$ on the right hand side is considered as a real Lie group. In particular we have
$$
\pi_0(K_m)\;=\;\pi_0(K_m(\RR))\;=\;\{\pm1\}^{r_F}.
$$
The inclusion $G_m\to G_{2m}$ induces a natural isomorphism
$$
\pi_0(K_m)\cong\pi_0(K_{2m}).
$$

We choose $K:=K_{2n}$ as a $\QQ$-form of the standard maximal compact subgroup of $G(\RR)$. Then the intersection of $K$ with $H$ is a $\QQ$-form $L$ of a standard maximal compact subgroup of $H(\RR)$. It is isomorphic to the product $K_n\times K_n$ and split over $E_K$.

We remark that we have a commutative square
$$
\begin{CD}
\pi_0(K)@>\sim>>\{\pm1\}^{r_F}\\
@AAA @AAA\\
\pi_0(L)@>\sim>>\{\pm1\}^{r_F}\times\{\pm1\}^{r_F}
\end{CD}
$$
where the vertical arrows are epimorphisms and the right vertical map is dual to the diagonal embedding. Passing dually to admissible characters of the component groups, we obtain isomorphisms.

We have a natural isomorphism between the group of quasi-characters $\Hom(G_n(\RR),\CC^\times)$ and the group of
one-dimensional representations of $(\lieg_n,K_n)_\CC$, sending a quasi-character $\chi$ to its derivative
$$
d\chi:\quad \lieg_{n,\CC}\to\CC,
$$
and to the complexification of its restriction to $K_n(\RR)$,
$$
\chi|_{K_n(\RR),\CC}:\quad K_n(\CC)\to\CC^\times.
$$
For the sake of notational simplicity we write $\omega_s^\delta$ also for the image of $\omega_s^\delta$ under this correspondence. Then, as $\mathcal N$ is defined over $\QQ$, and $\sgn_\infty^\delta$, as a rational character of $K_n$, is defined over $\QQ$ as well, we see with \eqref{eq:algebraicarchimedeannorm}, that for each $s\in\ZZ$ the algebraic quasi-character
$$
\omega_s^\delta\;=\;\sgn_\infty^\delta\otimes(\sgn_\infty\otimes\eta)^s,
$$
considered as a one-dimensional $(\lieg_n,K_n)$-module, is defined over $\QQ$. We write $X^{\rm alg}(G_n(\RR))$ for the group of algebraic quasi-characters of $G_n(\RR)$ resp.\ $(\lieg_n,K_n)$.

Recall that we defined $E_K=\QQ(\sqrt{-1})$. We write $\tau_K\in\Gal(E_K/\QQ)$ for the unique non-trivial automorphism, which we simply call {\em complex conjugation}.

\subsection{Finite-dimensional representations}

As $G$ is quasi-split we may find a Borel $P\subseteq G$ defined over $\QQ$. We choose a maximal torus $T\subseteq P$ over $\QQ$ and denote by $P^-$ the opposite of $P$. We let $X(T)$ denote the characters of $T$ defined over $\overline{\QQ}$ or $\CC$, which amounts to the same, and let $X_E(T)$ denote the subgroup of characters defined over $E$. Then the absolute Galois group $\Gal(\overline{\QQ}/\QQ)$ resp.\ $\Aut(\CC/\QQ)$ act naturally on $X(T)$ from the right via the rule
$$
\mu^\tau(t)\;=\;\mu\left(t^{\tau^{-1}}\right)^{\tau},
$$
for $\mu\in X(T)$, $t\in T(\CC)$ and $\tau\in\Aut(\CC/\QQ)$. If $M_\mu$ denotes the irreducible rational $G$-module with highest weight $\mu\in X(T)$ over $\CC$ (or $\overline{\QQ}$), then the Galois twisted module
\begin{equation}
(M_\mu)^\tau\;:=\;M_\mu\otimes_{\CC,\tau^{-1}} \CC
\label{eq:galoistwistedmodule}
\end{equation}
is irreducible of highest weight $\mu^\tau$. Furthermore it is easy to see that $M_\mu$ is defined over the field of rationality $\QQ(\mu)$ of $\mu$.

We remark that this Galois action on complex rational representations of $G$ is compatible with our rational structure on the category of complex $(\lieg,K)$-modules.

\subsection{Arithmeticity conditions}\label{sec:artihmeticity}

Following the terminology of \cite[Section 3]{januszewski2015}, adapted to the Shalika case, we call an absolutely irreducible rational $G$-module $M$ over $E/\QQ$ {\em arithmetic}, if it is essentially self-dual over $\QQ$, i.e.
$$
M^{\vee}\;\cong\;M\otimes\xi,
$$
with a $\QQ$-rational character $\xi\in X_\QQ(G)$. We remark that as $F$ is totally real, all its Galois twists $M^\tau$, $\tau\in\Gal(\overline{\QQ}/\QQ)$ are arithmetic if $M$ is so.

We identify the center $Z$ of $G=G_{2n}$ with a copy of $G_1$ via the isomorphism
\begin{equation}
G_1\to Z,\quad a\mapsto a\cdot{\bf1}_{2n}.
\label{eq:centeriso}
\end{equation}
The center $Z$ lies naturally in our maximal torus $T\subseteq G$, and we have a decomposition
$$
T\;=\;T^{\rm der}\cdot Z,
$$
with
$$
T^{\rm der}\;=\;T\cap G^{\rm der}.
$$
The intersection of the latter with $Z$ is finite. The decomposition \eqref{eq:rationaltorusdecomposition} of $G_1$ induces a projection
$$
p_T:\quad 
T\to T/(T^{\rm der}\cdot G_1^{\rm an})=:Z^{\rm s}\;\cong\;\GL_1,
$$
and we obtain a monomorphism
$$
p_T^*:\quad X_\QQ(Z^{\rm s})\to X_\QQ(T).
$$
We fix the identification
$$
Z^{\rm s}\;\cong\;\GL_1,
$$
subject to the same positivity condition as the isomorphism \eqref{eq:centeriso}. This induces an identification
$$
X_\QQ(Z^{\rm s})\;=\;
\ZZ,
$$
and we simply write $w_1$ for the image of $w_1\in X_\QQ(Z^{\rm s})$ under $p_T^*$.

If $M$ is a rational $G$-module of highest weight $\mu$, we denote by $\mu^{\vee}$ the highest weight of $M^{\vee}$. Then $M$ is arithmetic if and only if
$$
\mu^\vee-\mu\;\in\;\image(p_T^*).
$$
We have
\begin{equation}
(\mu^\sigma)^{\vee}-\mu^\sigma
\;=\;
w_1,
\label{eq:prearithmeticitiyweights}
\end{equation}
with $w_1$ independent of $\sigma\in\Gal(\overline{\QQ}/\QQ)$. Following Clozel \cite{clozel1990}, we may think of $w_1$ as a weight in the motivic sense.

A $\QQ$-rational character $\xi\in X_\QQ(H)\cong\ZZ^2$ of the form
\begin{equation}
\xi\;=\;(k,-w_1-k),\quad k\in\ZZ,
\label{eq:shalikacriticalxi}
\end{equation}
for $w_1$ as in \eqref{eq:prearithmeticitiyweights}, is called {\em critical} for $M$ if
$$
\Hom_H(M,\xi)\;\neq\;0.
$$
Due to the multiplicity one property of $\GL_n\times\GL_n$-equivariant functionals on irreducible $\GL_{2n}$-modules (cf.\ the non-compact analogue of Proposition \ref{prop:bonedimensionalfunctionals} below), we have
\begin{equation}
\dim_{\QQ(\mu)}\Hom_H(M_{\QQ(\mu)},\xi_{\QQ(\mu)})\;=\;1,
\label{eq:criticaldimensionone}
\end{equation}
for each critical $\xi$. We call $M$ {\em critical} if it admits a critical character $\xi$.



\section{Shalika models}\label{sec:shalika}

In this section we recall the theory of Friedberg-Jacquet \cite{friedbergjacquet1993,jacquetshalika1990} in the context of Shalika models for $\GL(2n)$. For a nice exposition of the subject, we refer to Section 3 in \cite{grobnerraghuram2014}.

\subsection{Group setup}

As in the preceeding section we put ourselves in the situation where
$$
G\;=\;\res_{F/\QQ}\GL_{2n},
$$
whose center we denote as before by $Z$, and recall that
$$
H\;=\;\res_{F/\QQ}(\GL_n\times\GL_n)\;\subseteq\;G,
$$
and $H$ is diagonally embedded in $G$. We factor $H$ into two copies of $\res_{F/\QQ}\GL_n$ accordingly, i.e.\
$$
H\;=\;H_1\times H_2,
$$
where $H_1$ denotes the upper block. Then the embedding of $H$ into $G$ is given explicitly by
$$
H_1\times H_2\to G,\quad (h_1,h_2)\mapsto \begin{pmatrix}h_1&\\&h_2\end{pmatrix}.
$$
Recall that $L=K\cap H$, then we have analogously
$$
L\;=\;L_1\times L_2
$$
with $L_j=K\cap H_j$.

The Shalika group is given by
$$
{\mathcal S}\;:=\;\res_{F/\QQ}
\left\{
\begin{pmatrix}
h & X\\
0& h
\end{pmatrix}
\in \GL_{2n}\;\mid\;h\in\GL_n,X\in M_n
\right\}\;\subseteq\;G_{2n}
$$
where $M_n$ denotes the $n\times n$ matrices. We consider ${\mathcal S}$ as a linear algebraic group over $\QQ$.

We fix factorizable (right) Haar measures on all adelized groups and compatibly on all their local factors.

\subsection{Automorphic representations}

We consider an irreducible cuspidal automorphic representation $\Pi$ of $G(\Adeles)$ with central character $\omega_\Pi$. We assume that we find an $n$-th root of $\omega_\Pi$ in the set of quasi-characters, and we fix such an $n$-th root, i.e.\ we fix a quasi-character $\eta:F^\times\backslash\Adeles_F^\times\to\CC^\times$ with the property that
$$
\omega_\Pi\;=\;\eta^n.
$$
Our choice of continuous character $\psi:F\backslash\Adeles_F\to\CC^\times$ together with $\eta$ induces a character
$$
\eta\otimes\psi:\quad {\mathcal S}(\Adeles)\to\CC^\times,
$$
$$
\begin{pmatrix}
h & 0\\
0& h
\end{pmatrix}
\cdot
\begin{pmatrix}
1 & X\\
0& 1
\end{pmatrix}
\;\mapsto\;\eta(\det(h))\psi(\tr(X)),
$$
and we may consider the intertwining operator
$$
{\mathcal S}_\psi^\eta:\quad\Pi\;\to\;\ind_{{\mathcal S}(\Adeles)}^{G(\Adeles)}(\eta\otimes\psi),
$$
$$
\varphi\;\mapsto\;
\left[
g\;\mapsto\;
\int_{Z(\Adeles){\mathcal S}(\QQ)\backslash{\mathcal S}(\Adeles)}
(\Pi(g)\cdot\varphi)(s)(\eta\otimes\psi)(s^{-1})ds
\right].
$$
We say that $\Pi$ admits an $(\eta,\psi)$-Shalika model if ${\mathcal S}_\psi^\eta$ is non-zero, i.e.\ if $\Pi$ admits an embedding into the representation induced from $(\eta,\psi)$. The following Theorem is due to Jacquet and Shalika.
\begin{theorem}[Theorem 1 in \cite{jacquetshalika1990}]\label{thm:jacquetshalika}
With the notation above, the following two statements are equivalent:
\begin{itemize}
\item[(i)] $\Pi$ admits an $(\eta,\psi)$-Shalika model.
\item[(ii)] For an appropriate finite set $S$ of places of $\QQ$, the twisted partial exterior square $L$-function
$$
L^S(s,\Pi,\wedge^2\otimes\eta^{-1})\;=\;
\prod_{v\not\in S}L(s,\Pi_v,\wedge^2\otimes\eta_v^{-1})
$$
has a pole at $s=1$.
\end{itemize}
\end{theorem}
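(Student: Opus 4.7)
The plan is to follow Jacquet--Shalika's original strategy: construct a global zeta integral which represents the twisted partial exterior square $L$-function, and exploit an unfolding identity that directly ties the global analytic behavior at $s=1$ to the non-vanishing of the Shalika period.

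First I would introduce the mirabolic Eisenstein series $E(s,\Phi,g)$ on $G$ attached to a Schwartz--Bruhat function $\Phi\in\mathcal S(\Adeles_F^{2n})$, namely the meromorphic continuation of
$$
E(s,\Phi,g)\;=\;|\det g|^s\sum_{\xi\in F^{2n}\setminus\{0\}}\Phi(\xi g)\eta^{-1}(\det g),
$$
and form the global zeta integral
$$
Z(s,\varphi,\Phi)\;:=\;\int_{Z(\Adeles)G(F)\backslash G(\Adeles)}\varphi(g)\,E(s,\Phi,g)\,dg
$$
for a cusp form $\varphi\in\Pi$. This integral inherits the meromorphic continuation and pole structure of the mirabolic Eisenstein series (simple pole at $s=1$, regular elsewhere in a neighborhood), since cuspidality kills the constant-term contributions.

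Second, I would carry out the unfolding using the orbit structure of $G(F)$ acting on $F^{2n}\setminus\{0\}$ via the decomposition through the Shalika subgroup ${\mathcal S}$. After the standard manipulations this produces the identity
$$
Z(s,\varphi,\Phi)\;=\;\int_{{\mathcal S}(\Adeles)\backslash G(\Adeles)}{\mathcal S}_\psi^\eta(\varphi)(g)\,\Phi(e\,g)\,|\det g|^{s}\,dg,
$$
where $e\in F^{2n}$ is the appropriate row vector distinguished by the choice of ${\mathcal S}$. In this form the global integral is manifestly a product of local Shalika-type integrals, and vanishes identically in $s$ precisely when ${\mathcal S}_\psi^\eta\equiv 0$.

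Third, I would feed in the local unramified computation of Friedberg--Jacquet, which identifies the unramified local factors of the above integral with the local twisted exterior square $L$-factors. This gives, for a sufficiently large finite set $S$ of places and factorizable data,
$$
Z(s,\varphi,\Phi)\;=\;L^S(s,\Pi,\wedge^2\otimes\eta^{-1})\cdot\prod_{v\in S}Z_v(s,\varphi_v,\Phi_v),
$$
with each $Z_v$ absolutely convergent and holomorphic at $s=1$. Combined with step one, this identity gives both implications: if $\Pi$ admits an $(\eta,\psi)$-Shalika model, one selects local data so that $\prod_{v\in S}Z_v(1,\varphi_v,\Phi_v)\neq 0$ and the pole of $E(s,\Phi,g)$ at $s=1$ transports to a pole of $L^S$; conversely, a pole of $L^S$ at $s=1$ forces $Z(s,\varphi,\Phi)$ to be non-identically zero, hence ${\mathcal S}_\psi^\eta\neq 0$ by the unfolded expression.

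The main obstacle will be step three, specifically the local non-vanishing at $s=1$. One must show that at each $v\in S$ the local integral $Z_v(s,\varphi_v,\Phi_v)$ can be made non-zero at $s=1$ simultaneously, which requires an archimedean and non-archimedean density/continuity argument together with the local multiplicity-one statement for Shalika functionals, and a careful separation of the pole coming from the Eisenstein series from potential zeros of the local factors at the critical point.
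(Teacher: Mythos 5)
The paper does not prove Theorem~\ref{thm:jacquetshalika}; it is quoted directly from Jacquet--Shalika (Theorem~1 of \cite{jacquetshalika1990}), so there is no ``paper's proof'' to compare against. Your reconstruction captures the right overall shape (Eisenstein series, global zeta integral, unfolding, unramified local computation), but there are two concrete gaps that would derail the argument as written.

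First, the mirabolic Eisenstein series in Jacquet--Shalika lives on $\GL_n$, attached to $\Phi\in\mathcal S(\Adeles_F^n)$, not on $G=\GL_{2n}$ attached to $\Phi\in\mathcal S(\Adeles_F^{2n})$. The global integral pairs a cusp form on $\GL_{2n}$, first integrated against $\psi(\tr X)$ over the Shalika unipotent and then against the $\GL_n$-Eisenstein series over the diagonal copy of $\GL_n$. Writing the zeta integral over $Z(\Adeles)G(F)\backslash G(\Adeles)$ with a $\GL_{2n}$-Eisenstein series does not produce the exterior square $L$-function.

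Second and more seriously, you conflate two distinct steps. In Jacquet--Shalika, the unfolding of the zeta integral in its region of absolute convergence yields an Eulerian expression in terms of the \emph{Whittaker} function of $\varphi$, and it is the unramified computation on these Whittaker integrals that identifies the local factors with $L(s,\Pi_v,\wedge^2\otimes\eta_v^{-1})$. The Shalika period does not appear from the unfolding; it appears separately, as the \emph{residue} of the zeta integral at $s=1$, by isolating the constant term of the Eisenstein series (which contributes $\widehat\Phi(0)$ times a constant function). Your proposed unfolded identity $Z(s,\varphi,\Phi)=\int_{\mathcal S(\Adeles)\backslash G(\Adeles)}\mathcal S_\psi^\eta(\varphi)(g)\,\Phi(eg)\,|\det g|^s\,dg$ is not the Jacquet--Shalika unfolding: it resembles the Friedberg--Jacquet linear-period integral, which assumes a Shalika model from the outset and computes the \emph{standard} $L$-function, not the exterior square. (Relatedly, the unramified local computation for the exterior square factor is due to Jacquet--Shalika themselves, not Friedberg--Jacquet.) To repair the argument you would need both legs of the comparison: the residue formula identifying $\operatorname{Res}_{s=1}$ of the zeta integral with the Shalika period, and the separate Eulerian unfolding through the Whittaker model that gives the $L$-function; only then does the non-vanishing of the local factors at $s=1$ at the ramified and archimedean places (your step three) close the equivalence.
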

In this theorem, and in the context of Shalika models in the sequel, we consider $\Pi$ as a representation of $G(\Adeles)$ and factor it accordingly into local representations. The same applies to the characters $\eta$ and $\psi$, they are considered over $\QQ$ via restriction of scalars.

We also mention the following useful characterization of representations admitting a Shalika model.
\begin{proposition}[Proposition 3.1.4 in \cite{grobnerraghuram2014}]
With the notation above, the following statements are equivalent:
\begin{itemize}
\item[(i)] $\Pi$ admits an $(\eta,\psi)$-Shalika model.
\item[(ii)] $\Pi$ is the transfer of a globally generic irreducible cuspidal representation of $\GSpin_{2n+1}(\Adeles_F)$ to $\GL_{2n}(\Adeles_F)$.
\end{itemize}
\end{proposition}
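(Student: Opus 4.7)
The plan is to use Theorem \ref{thm:jacquetshalika} as the bridge: this result characterizes the existence of an $(\eta,\psi)$-Shalika model by the condition that the partial twisted exterior square $L$-function $L^{S}(s,\Pi,\wedge^{2}\otimes\eta^{-1})$ has a pole at $s=1$. It therefore suffices to show that condition (ii) in the proposition is equivalent to the same pole statement, so that the equivalence passes through the exterior square $L$-function as an intermediate condition.

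For the implication (ii)$\Rightarrow$(i), I would argue via functoriality of $L$-functions. The functorial transfer from $\GSpin_{2n+1}$ to $\GL_{2n}$ is governed by the embedding of Langlands dual groups $\GSp_{2n}(\CC)\hookrightarrow \GL_{2n}(\CC)$. Restricting the exterior square representation $\wedge^{2}$ of $\GL_{2n}(\CC)$ to $\GSp_{2n}(\CC)$ decomposes as the sum of the similitude character and an irreducible representation of dimension $\binom{2n}{2}-1$. Consequently, when $\Pi$ is the transfer of a globally generic cuspidal $\sigma$ on $\GSpin_{2n+1}(\Adeles_F)$, we get a factorization
$$
L^{S}(s,\Pi,\wedge^{2}\otimes\eta^{-1})\;=\;L^{S}(s,\sigma,\mathrm{sim}\otimes\eta^{-1})\cdot L^{S}(s,\sigma,\rho),
$$
for the appropriate second representation $\rho$. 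With $\eta$ chosen correctly to match the similitude (which is forced by the central character condition $\omega_{\Pi}=\eta^{n}$), the first factor becomes a partial Dedekind zeta function of $F$ up to finitely many Euler factors, hence has a simple pole at $s=1$, while the second is holomorphic and non-zero at $s=1$ by Langlands-Shahidi theory applied to the generic cuspidal $\sigma$. This yields the pole, and hence by Theorem \ref{thm:jacquetshalika} a Shalika model.

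For the converse (i)$\Rightarrow$(ii), I would invoke the automorphic descent construction, in its $\GSpin$ version, building on Ginzburg-Rallis-Soudry's descent from $\GL_{2n}$ to $\SO_{2n+1}$. The pole of $L^{S}(s,\Pi,\wedge^{2}\otimes\eta^{-1})$ at $s=1$ is precisely what guarantees that a certain Fourier-Jacobi descent of a residue of an Eisenstein series built from $\Pi$ and $\eta$ produces a non-zero, cuspidal, globally generic automorphic representation $\sigma$ of $\GSpin_{2n+1}(\Adeles_F)$. One then verifies, place-by-place using the local transfer (which is known for generic representations), that the functorial lift of this $\sigma$ recovers $\Pi$; the argument relies on strong multiplicity one for $\GL_{2n}$ and the compatibility of the local descents with the unramified Satake parameters.

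The main obstacle is the descent direction: constructing the $\GSpin_{2n+1}$-preimage requires the full apparatus of residues of Eisenstein series, Fourier-Jacobi coefficients, and a careful characterization of the image of functorial transfer. Since the proposition is stated verbatim from \cite{grobnerraghuram2014} and the proof goes back to the work of Asgari-Shahidi on the generic transfer combined with descent techniques, in practice one simply cites these references; the sketch above records the structure that any honest proof must follow.
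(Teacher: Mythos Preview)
The paper does not actually prove this proposition; it is quoted verbatim as Proposition~3.1.4 of \cite{grobnerraghuram2014} and used as a black box. Your sketch is therefore not being compared against anything in the present paper, but it is a faithful outline of the argument one finds in the literature: the direction (ii)$\Rightarrow$(i) is the content of Asgari--Shahidi's generic transfer for $\GSpin_{2n+1}$ combined with the decomposition of $\wedge^{2}$ restricted to $\GSp_{2n}(\CC)$, and the converse is automorphic descent (Ginzburg--Rallis--Soudry for $\SO_{2n+1}$, extended to $\GSpin_{2n+1}$ by Hundley--Sayag), exactly as you describe. Your own final paragraph already correctly identifies that in practice one simply cites these sources, which is precisely what the paper does.
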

We remark that this characterization, as well as the one given in Theorem \ref{thm:jacquetshalika}, are valid more generally over any number field $F$.

In the case of $n=1$, we may choose $\omega_\Pi=\eta$ and the $(\eta,\psi)$-Shalika model then coincides with the Whittaker model and the theory discussed here specializes to the Rankin-Selberg theory in that case. Our results on period relations then are identical with those from \cite{januszewskipart1}.

From now on we assume $\Pi$ to admit an $(\eta,\psi)$-Shalika model and set
$$
\mathscr{S}(\Pi,\eta,\psi)\;:=\;{\mathcal S}_\psi^\eta(\Pi),
$$
and apply this notation also locally, i.e.\
$$
\mathscr{S}(\Pi_v,\eta_v,\psi_v)\;:=\;{\mathcal S}_{\psi_v}^{\eta_v}(\Pi_v),
$$
where ${\mathcal S}_\psi^\eta$ gives rise to the local intertwining operator
$$
{\mathcal S}_{\psi_v}^{\eta_v}:\quad \Pi_v\to\ind_{\mathcal S(\QQ_v)}^{G(\QQ_v)}(\eta_v\otimes\psi_v).
$$
We remark that vectors in the Shalika model satisfy the symmetry property
\begin{equation}
\mathcal S_\psi^\eta(\varphi)(sg)\;=\;
\eta(s\otimes\psi)(s)\cdot
\mathcal S_\psi^\eta(\varphi)(g),\quad g\in G(\Adeles),\,s\in\mathcal S(\Adeles).
\label{eq:shalikasymmetry}
\end{equation}
Furthermore, following \eqref{eq:characterpullback}, each quasi-character $\chi:H_1(\Adeles)\to\CC^\times$ gives rise to an {\em admissible} quasi-character
\begin{equation}
\begin{cases}
\chi:\quad &H(\Adeles)\to\CC^\times,\\
&h\;\mapsto\;
\chi(h_2^{-1}h_1),
\end{cases}
\label{eq:h1tohtransfer}
\end{equation}
with the running convention that
$$
h\;=\;\begin{pmatrix}h_1&0\\0&h_2\end{pmatrix}.
$$
All quasi-character of $H(\Adeles)$ we consider will be of this form. We apply the same formalism locally and also in the rational setting of $(\lieh,L)$-modules.

We consider $\eta$ as a character of $H(\Adeles)$ via the rule
$$
(1\otimes\eta)(h)\;:=\;\eta(\det(h_2)).
$$

We remark that if $\Pi$ is cohomological with associated weight $w_1$ as in \eqref{eq:prearithmeticitiyweights}, then (cf.\ Lemma 3.6.1 in \cite{grobnerraghuram2014}),
\begin{equation}
\eta_\infty\;=\;\left(\mathcal N^{\otimes{w_1}}\right)_\CC.
\label{eq:etainfinity}
\end{equation}
This implies that $\eta$ is an algebraic Hecke character and therefore $\QQ(\eta)$ is a number field. Furthermore by Wee Teck Gan's Theorem in the Appendix of \cite{grobnerraghuram2014}, if $\Pi$ admits an $(\eta,\psi)$-Shalika model, then for each $\sigma\in\Aut(\CC/\QQ)$ the twisted representation $\Pi^\sigma$ admits an $(\eta^\sigma,\psi)$-Shalika model.

\subsection{The archimedean zeta integral}\label{sec:shalikaarchimedeanintegral}

For each quasi-character $\chi\in\Hom(H_1(\RR),\CC^\times)$ and each $w\in\mathscr{S}(\Pi_\infty,\eta_\infty,\psi_\infty)$, we consider the archimedean integral
\begin{equation}
\Psi_\infty:\quad (\chi,w)\;\mapsto\;\int_{H_1(\RR)}w(h_\infty)\chi(h_\infty)dh_\infty.
\label{eq:shalikaarchimedeanintegral}
\end{equation}
This integral converges absolutely, provided $\chi$ lies in a suitable right half plane. In such a half plane $\Psi_\infty(-,w)$ defines a holomorphic function in $\chi$. The map
\begin{equation}
e_\infty:\quad (\chi,w)\;\mapsto\;\frac{\Psi_\infty(\chi,w)}{L(\frac{1}{2},\Pi_\infty\otimes\chi)}
\label{eq:shalikaarchimedeanintegralratio}
\end{equation}
is well defined for {\em all} $\chi$, as the right hand side defines an entire function in $\chi$ on every connected component of $\Hom(H_1(\RR),\CC^\times)$.

Now by Theorem 1.6 of \cite{sunpreprint2}, we know that for each $\chi$ we may find a $w\in\mathscr S(\Pi_\infty,\eta_\infty,\psi_\infty)^{(K)}$ with the property that
\begin{equation}
e_\infty(\chi',w)\;\neq\; 0.
\label{eq:shalikaarchimedeanintegralnonzero}
\end{equation}
is satisfied for all $\chi'$ in the connected component containing $\chi$. By Proposition 3.1 in \cite{friedbergjacquet1993} we know that for $K$-finite $w$ the local zeta integral $\Psi_\infty(\chi,w)$ computes $L(\frac{1}{2},\Pi_\infty\otimes\chi)$ up to a holomorphic factor. In forthcoming work by Binyong Sun and the author, we show that this factor is indeed a polynomial. This in particular implies that $e_\infty(-,w)$ is locally constant in the variable $\chi$ whenever \eqref{eq:shalikaarchimedeanintegralnonzero} is satisfied for {\em all} quasi-characters $\chi$ under consideration.

We conclude that for such a $w$ the archimedean integral \eqref{eq:shalikaarchimedeanintegral} computes the $\Gamma$-factor of the standard $L$-function $L(s,\Pi\otimes\chi)$ up to a complex unit, only depending on the connected component containing $\chi$. We will see in Theorem \ref{thm:rationaltestvector} that we may indeed choose a vector $w$ which satisfies \eqref{eq:shalikaarchimedeanintegralnonzero} for all quasi-characters $\chi$.

The integral \eqref{eq:shalikaarchimedeanintegral} and the ratio \eqref{eq:shalikaarchimedeanintegralratio} defines for $\chi$ in a suitable right half plane resp.\ for general $\chi$ defines, as a function in the variable $w$, by \eqref{eq:h1tohtransfer} and the property \eqref{eq:shalikasymmetry}, a continuous $H(\RR)$-equivariant functional
$$
e_\infty(\chi,-):\quad\mathscr S(\Pi_\infty,\eta_\infty,\psi_\infty)\to(\chi^{-1}\otimes(1\otimes\eta_\infty))_{\CC}.
$$
By \eqref{eq:etainfinity} the field of definition of the quasi-character $\chi^{-1}\otimes(1\otimes\eta)$ agrees with the field of definition of $\chi$. If we set
$$
\chi_{1}\;:=\;\chi[w_1],
$$
then we may interpret $e_\infty(\chi,-)$ as an $H(\RR)$-equivariant functional
$$
e_\infty(\chi,-):\quad\mathscr S(\Pi_\infty,\eta_\infty,\psi_\infty)\to\chi^{-1}_{1,\CC}.
$$

Again we remark that $\Pi_\infty\otimes\chi\cong\Pi_\infty$ for every finite order character $\chi$, implies the identity of local $L$-functions
$$
L(s,\Pi_\infty\otimes\chi)\;=\;L(s,\Pi_\infty).
$$

\subsection{The non-archimedean zeta integrals}

Fix a rational prime $p$. For each quasi-character $\chi\in\Hom(H_1(\QQ_p),\CC^\times)$ and each $w\in\mathscr{S}(\Pi_p,\eta_p,\psi_p)$, we consider the non-archimedean analogue of \eqref{eq:shalikaarchimedeanintegral}, given by
\begin{equation}
\Psi_p:\quad (\chi,w)\;\mapsto\;\int_{H(\QQ_p)}w(h_p)\chi(h_p)dh_p.
\label{eq:shalikanonarchimedeanintegral}
\end{equation}
For $\chi$ in a suitable right half plane the integral in \eqref{eq:shalikanonarchimedeanintegral} is absolutely convergent and $\Psi_p(-,w)$ defines a holomorphic function in $\chi$. The map
\begin{equation}
e_p:\quad (\chi,w)\;\mapsto\;\frac{\Psi_p(\chi,w)}{L(\frac{1}{2},\Pi_p\otimes\chi)}
\label{eq:shalikanonarchimedeanintegralratio}
\end{equation}
is well defined for all $\chi$ and $w$ and defines an entire function in the variable $\chi$ on each connected component of $\Hom(H_1(\QQ_p),\CC^\times)$. As in the archimedean case we always find for a given $\chi$ a $w\in\mathscr S(\Pi_p,,\eta_p,\psi_p)$ such that
$$
e_p(\chi,w)\;\neq\;0.
$$
This implies that for each quasi-character $\chi$ there is a {\em good vector} $t_p^\chi$, depending only on the connected component of $\chi$ in $\Hom(H_1(\QQ_p),\CC^\times)$, satisfying
$$
e_p(\chi,t_p^\chi)\;=\;1,
$$
cf.\ Proposition 3.1 in \cite{friedbergjacquet1993}. For almost all primes
$$
t_p^0\in\mathscr S(\Pi_p,\eta_p,\psi_p)^{G(\ZZ_p)}
$$
produces the correct Euler factor for $\Pi_p\otimes\chi_p$.

\subsection{The global zeta integral}

We depart from a quasi-character
$$
\chi:\quad H_1(\QQ)\backslash H_1(\Adeles)\to\CC^\times,
$$
and fix a good test vector
$$
t^\chi\;=\;\otimes_v t_v^{\chi_v}\;\in\;\otimes_v'\mathscr S(\Pi_v,\eta_v,\psi_v)
\;=\;\mathscr S(\Pi,\eta,\psi),
$$
which at almost all places $v$ agrees with the normalized spherical vector $t_v^0\in\mathscr S(\Pi_v,\eta_v,\psi_v)$. Then the cusp form
$$
\Theta(t^\chi)\;:=\;\left(\mathcal S_{\psi}^\eta\right)^{-1}(t^\chi)\;\in\;\Pi,
$$
is a good test vector for the global integral
$$
\Lambda(s,\Pi\otimes\chi)\;=\;I(s,\chi,\Theta(t^\chi))\;:=\;
$$
$$
\int_{Z(\Adeles)H(\QQ)\backslash H(\Adeles)}
\Theta(t^\chi)\left(
h
\right)
\cdot
\eta(\det(h_2^{-1}))
\cdot
(\chi\otimes\omega_{s-\frac{1}{2}}^0)\left(
h_1h_2^{-1}\right)
d
h.
$$
Again we write $L(s,\Pi\otimes\chi)$ for the corresponding incomplete $L$-function.

\section{Rationality properties of archimedean Shalika integrals}\label{sec:cohomologicalinduction}

In this section we discuss how to apply the rational theory of the first sections to the case at hand.

\subsection{Cohomologically induced standard modules}

We fix a $\theta$-stable Borel subalgebra $\lieq_{E_K}\subseteq\lieg_{E_K}$ transversal to $\lieh$, i.e.
\begin{equation}
\theta(\lieq_{E_K})\;=\;\lieq_{E_K},
\label{eq:thetastability}
\end{equation}
and
\begin{equation}
\lieg_{E_K}\;=\;\lieq_{E_K}\oplus\lieh_{E_K}.
\label{eq:transversality}
\end{equation}
That a Borel subalgebra $\lieq$ with these properties exists over $\CC$ is shown in Section 4 of \cite{sunpreprint2}, and it is easy to deduce from the discussion there that $\lieq$ is defined over $E_K$.

By \eqref{eq:thetastability}, the Lie algebra
$$
\lieq_{E_K}\cap\liek_{E_K}\subseteq\liek_{E_K}
$$
is a Borel subalgebra of $\liek_{E_K}$. Again by \eqref{eq:thetastability}, complex conjugation $\tau_K\in\Gal(E_K/\QQ)$ maps $\lieq$ to its opposite
$$
\overline{\lieq}_{E_K}\;:=\lieq_{E_K}^{\tau_K}\;=\;\lieq_{E_K}^-,
$$
if we consider the unique $\theta$-stable and $\QQ$-rational Levi factor given explicitly by
$$
\liec\;:=\;\lieq\cap\overline{\lieq}.
$$
We write $\lieu\subseteq\lieq$ for the nilpotent radical of $\lieq$. Then $\lieq_{E_K}\cap\liek_{E_K}$ has the Levi decomposition
$$
\lieq_{E_K}\cap\liek_{E_K}\;=\;(\liec_{E_K}\cap\liek_{E_K})\oplus (\lieu_{E_K}\cap\liek_{E_K}).
$$

We fix the compact component $C$ of the Levi pair associated to $\lieq$ as the normalizer of $\liec=\lieq\cap\lieq^-$ inside $K$. We remark that $C$ is defined over $\QQ$, and $(\lieq,C)_{E_K}$ is a parabolic pair over $E_K$. Furthermore $CK^0$ is of index $2^{r_F}$ in $K$.

We consider the $(\lieg_\CC,K_\CC)$-module underlying $\Pi_\infty$ as a cohomologically induced standard module $A_{\lieq}(\mu)_\CC$ as follows.

Starting from $M(\mu)$ as a rational irreducible representation of $G$ with highest weight $\mu$, we consider $\mu$ as a highest weight with respect to $\lieq_\CC$. As such we have a one-dimensional highest weight space $H^0(\lieu;M(\mu))$ inside of $M(\mu)$. As $C$ normalizes $\lieq$, the inclusion $C\to G$ induces an action of $C$ on $M(\mu)$ under which $H^0(\lieu;M(\mu))$ is stable. Hence we obtain a character $\mu$ of the pair $(\liec,C)_\CC$, or equivalently of the parablic pair $(\lieq,C)_\CC$ with trivial action of the radical. This agrees with the natural action of the pair $(\lieq,C)_\CC$ on $H^0(\lieu;M(\mu))$. All these data are indeed defined over $E_K(\mu)=E_K\cdot\QQ(\mu)$, if we fix once and for all an inclusion
\begin{equation}
\iota:\quad E_K(\mu)\to\CC.
\label{eq:complexembedding}
\end{equation}

Recall from section \ref{sec:inducedmaps} the middle degree
$$
S_\lieq\;=\;\dim_{E_K}(\lieu_{E_K}\cap\liek_{E_K}),
$$
and consider the inclusions of pairs $i_\lieq:(\lieq,C)_{E_K}\to(\lieg,C)_{E_K}$ and $i_\lieg:(\lieg,C)_\QQ\to(\lieg,K)_\QQ$. We also consider the inclusion $i_\lieg^\circ:(\lieg,C)_\QQ\to(\lieg,CK^0)_\QQ$. Then for the associated Bernstein functors we have the relation
$$
P_{i_{\lieg}}\;=\;\ind_{CK^0}^K\circ P_{i_{\lieg}^\circ},
$$
which, by the associated degenerating Grothendieck spectral sequence, naturally extends to an isomorphism
\begin{equation}
L_qP_{i_{\lieg}}\;=\;\ind_{CK^0}^K\circ L_qP_{i_{\lieg}^\circ},
\label{eq:gammacomposition}
\end{equation}
of derived functors in any degree $q$.

We have the $E_K(\mu)$-rational modules
$$
A_\lieq(\mu)\;:=\;L_{S_\lieq}P_{i_{\lieg}\circ i_{\overline{\lieq}}}(\mu\otimes\bigwedge^{\dim\lieu}\lieu_{E_K(\mu)}),
$$
and
$$
A_\lieq^\circ(\mu)\;:=\;L_{S_\lieq}P_{i_{\lieg}^\circ\circ i_{\overline{\lieq}}}(\mu\otimes\bigwedge^{\dim\lieu}\lieu_{E_K(\mu)}).
$$
Then $A_\lieq^0(\mu)$ is a submodule of $A_\lieq(\mu)$ and the corresponding inclusion induces by \eqref{eq:gammacomposition} an isomorphism
$$
A_\lieq(\mu)\;=\;\ind_{CK^0}^K A_\lieq^\circ(\mu).
$$
In Theorem 8.26 of \cite{januszewskipreprint} it is shown that $A_\lieq(\mu)$ is defined over $\QQ(\mu)$. The field of definition of $A_\lieq^\circ(\mu)$ is determined in Theorem 2.3 of \cite{januszewskipart1}.

Now if $\Pi_\infty$ has non-trivial relative Lie algebra cohomology with coefficients in $M(\mu)^\vee$, then we know by \cite{voganzuckerman1984} and the fact that Bernstein functors commute with base change (Theorem \ref{thm:basechangeinduction}), that we have an isomorphism
\begin{equation}
\iota_\infty:\quad A_\lieq(\mu)_\CC\;\to\;\Pi_\infty^{(K)}
\label{eq:moduleisomorphism}
\end{equation}
where the base change on the left hand side is understood via the embedding \eqref{eq:complexembedding}.

We remark that the natural embedding $\QQ(\mu)\to E_K(\mu)$ induces with \eqref{eq:complexembedding} an embedding $\QQ(\mu)\to\CC$, which we also consider fixed in the sequel. In particular \eqref{eq:moduleisomorphism} retains its meaning also for the model of $A_{\lieq}(\mu)$ over $\QQ(\mu)$, the latter being unique by Proposition 3.5 of \cite{januszewskipreprint}.

\subsection{Structure of the bottom layer}

We denote by
$$
B_{\lieq}(\mu)_{E_K(\mu)}\;:=\;
A_{\lieq\cap\liek}(\mu|_{C}\otimes\!\!\!\!\!\bigwedge^{\dim\lieu/\lieu\cap\liek}\!\!\!\!\!\!\!(\lieu/\lieu\cap\liek))_{E_K(\mu)}
\;\subseteq\;
A_{\lieq}(\mu)_{E_K(\mu)}
$$
the bottom layer, with
$$
A_{\lieq\cap\liek}(\cdot)\;
\;:=\;
L_{S_\lieq}P_{i_{\liek}i_{\overline{\lieq}\cap\liek}}\left((\cdot)\otimes\!\!\!\!\!\bigwedge^{\dim\lieu\cap\liek}\!\!\!\!\!\lieu\cap\liek\right).
$$
The main descent argument in the proof of Theorem 7.3 in \cite{januszewskipreprint} is based on the fact that the bottom layer itself is defined over $\QQ(\mu)$. Again its model over $\QQ(\mu)$ is unique and we denote it the same.

As in the non-compact case we introduce the intermediate representation
$$
B_{\lieq}^\circ(\mu|_{C})_{E_K(\mu)}:=A_{\lieq\cap\liek}^\circ(\mu|_{C})_{E_K(\mu)}
:=
L_{S_\lieq}P_{i_{\liek}^\circ i_{\overline{\lieq}\cap\liek}}
(\mu|_{C}\otimes\bigwedge^{\dim\lieu}\lieu_{E_K(\mu)})
\subseteq A_{\lieq}^\circ(\mu)_{E_K(\mu)},
$$
with respect to the inclusion
$$
i_{\liek}^\circ:\quad (\liek,C)\to(\liek,CK^0).
$$
Then $B_{\lieq}^\circ(\mu|_{C})_{E_K(\mu)}$ is an irreducible representation of $K^0$ of highest weight
\begin{equation}
\mu_K\;:=\;\mu|_{C^0}+2\rho(\lieu/\lieu\cap\liek),
\label{eq:ktypeweight}
\end{equation}
where
$$
\rho(\lieu/\lieu\cap\liek)\;=\;\frac{1}{2}\sum_{\alpha\in\Delta(\lieu/\lieu\cap\liek,\liec)}\alpha,
$$
is the half sum of weights in $\lieu/\lieu\cap\liek$. Again we have
\begin{equation}
B_\lieq(\mu)_{E_K(\mu)}\;=\;\ind_{CK^0}^{K}B_\lieq^\circ(\mu)_{E_K(\mu)}.
\label{eq:bqintermediateinduction}
\end{equation}

By the preceeding discussion and the transitivity principle \eqref{eq:bqintermediateinduction}, the representation $B_{\lieq}(\mu)_{\CC}^\circ$ decomposes a complex representation of
$$
K^0(\RR)\;=\;\prod_{v}\SO(2n,F_v),
$$
over $\CC$, into an outer tensor product
\begin{equation}
B_{\lieq}^\circ(\mu)_{\CC}\;=\;
\bigotimes_{v}B_{\mu_{v}},
\label{eq:bottomlayerzerodecomposition}
\end{equation}
where
$$
\mu_K\;=\;\otimes_v \mu_{v},
$$
is the natural factorization of $\mu_K$ into local components and $B_{\mu_{v}}$ denotes the complex irreducible representation of $\SO(2n,F_v)$ of highest weight $\mu_{v}$.

Then, again as a representation of $K^0(\RR)$, the full bottom layer decomposes as
\begin{equation}
B_{\lieq}(\mu)_{\CC}\;=\;
\bigotimes_{v}\left(B_{\mu_{v}}\oplus B_{\tilde{\mu}_{v}}\right),
\label{eq:bottomlayerdecomposition}
\end{equation}
with weights $\tilde{\mu}_v$, depending on $\mu_v$. 

We remark that as the representation $B_{\lieq}^\circ(\mu)_{\CC}$ is defined over $E_K(\mu)$, the summands in the direct sum decomposition \eqref{eq:bottomlayerdecomposition} are defined over $E_K(\mu)$ as well. However for later use, we need precise knowledge about their field of definition.

\begin{theorem}[Theorem 2.3 in \cite{januszewskipart1}]\label{thm:connectedbottomlayerdefinition}
The $K^0$-module $B_{\lieq}^\circ(\lambda)_{\CC}$ resp.\ the $(\lieg,K^0)$-module $A_{\lieq}^\circ(\lambda)_\CC$ is defined over $E_K(\mu)$. If $\sqrt{-1}\not\in\QQ(\mu)$, then $B_{\lieq}^\circ(\lambda)_{\CC}$ resp.\ $A_{\lieq}^\circ(\lambda)_\CC$ is defined over $\QQ(\mu)$ if and only $2\mid n$. The same applies to the summands in the direct sum decomposition \eqref{eq:bottomlayerdecomposition}.
\end{theorem}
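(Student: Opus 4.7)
The plan is as follows.

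The first step is to verify that both $A_\lieq^\circ(\mu)_\CC$ and $B_\lieq^\circ(\mu)_\CC$ admit $E_K(\mu)$-rational models. Since the $\theta$-stable Borel $\lieq$ is defined over $E_K$ by construction, the opposite $\overline{\lieq}$, the radical $\lieu$, and (using $\mu\in X_{\QQ(\mu)}(T)$) the one-dimensional $(\lieq,C)$-module $\mu\otimes\bigwedge^{\dim\lieu}\lieu$ are all $E_K(\mu)$-rational. The base-change compatibility of the Bernstein functors proved in Theorem \ref{thm:basechangeinduction} then provides canonical $E_K(\mu)$-forms of both $A_\lieq^\circ(\mu)$ and of its bottom layer $B_\lieq^\circ(\mu)$.

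Next, assume $\sqrt{-1}\notin\QQ(\mu)$, so that $\Gal(E_K(\mu)/\QQ(\mu))$ is generated by a single element $\tau$ lifting complex conjugation $\tau_K$. Since $\tau_K$ carries $\lieq$ to $\overline{\lieq}$ (by the $\theta$-stability imposed in \eqref{eq:thetastability}) and fixes $\mu$, Theorem \ref{thm:basechangeinduction} combined with \eqref{eq:galoistwistedmodule} yields canonical isomorphisms
$$
A_\lieq^\circ(\mu)^\tau\;\cong\;A_{\overline{\lieq}}^\circ(\mu),\qquad
B_\lieq^\circ(\mu)^\tau\;\cong\;B_{\overline{\lieq}}^\circ(\mu).
$$
By Galois descent, existence of a $\QQ(\mu)$-form of $A_\lieq^\circ(\mu)$ is equivalent to an isomorphism $A_\lieq^\circ(\mu)\cong A_{\overline{\lieq}}^\circ(\mu)$ of $E_K(\mu)$-rational $(\lieg,K^0)$-modules, compatible with the order-two action of $\tau$. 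Since both sides are irreducible standard modules uniquely determined by their minimal $K$-type, namely their respective bottom layers, the problem reduces to whether $B_\lieq^\circ(\mu)\cong B_{\overline{\lieq}}^\circ(\mu)$ as $K^0_{E_K(\mu)}$-modules.

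The key combinatorial computation is now on the level of bottom layers. Over $\CC$, the decomposition \eqref{eq:bottomlayerzerodecomposition} expresses $B_\lieq^\circ(\mu)_\CC$ as the outer tensor product of irreducible $\SO(2n,F_v)$-modules with highest weights $\mu_v=\mu|_{C^0,v}+2\rho(\lieu/\lieu\cap\liek)_v$ relative to the Borel $\lieq\cap\liek$; the analogous description of $B_{\overline{\lieq}}^\circ(\mu)_\CC$ uses the opposite Borel $\overline{\lieq}\cap\liek$. Rewriting the latter's highest weight with respect to $\lieq\cap\liek$ one obtains $-w_0(\mu_v)$, where $w_0$ is the long Weyl element of the $D_n$-root system. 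Thus the required isomorphism holds if and only if $-w_0(\mu_v)=\mu_v$ for every archimedean place $v$ of $F$. For $D_n$ one has $-w_0=\id$ when $2\mid n$, while $-w_0$ equals the non-trivial diagram automorphism $\sigma$ when $n$ is odd; the weights $\mu_v$ arising here generically have non-zero last coordinate and hence fail to be $\sigma$-fixed, giving the announced dichotomy.

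The main obstacle is to upgrade Galois self-conjugacy into an honest $\QQ(\mu)$-rational structure, i.e.\ to rule out a Schur-index obstruction in the case $2\mid n$. I would resolve this by invoking the already known $\QQ(\mu)$-rationality of the ambient module $A_\lieq(\mu)=\ind_{CK^0}^K A_\lieq^\circ(\mu)$ (Theorem 8.26 of \cite{januszewskipreprint}). Restricting its $\QQ(\mu)$-rational structure to $K^0$ gives a Galois-equivariant structure on $A_\lieq(\mu)|_{K^0}$, which splits as a sum of $K$-conjugates of $A_\lieq^\circ(\mu)$. When $2\mid n$ the induced $\tau$-action preserves each conjugate, producing the sought $\QQ(\mu)$-forms of $A_\lieq^\circ(\mu)$, of $B_\lieq^\circ(\mu)$ and, by the naturality of the bottom-layer construction, of each summand in \eqref{eq:bottomlayerdecomposition}; when $n$ is odd the same analysis shows that $\tau$ exchanges $A_\lieq^\circ(\mu)$ with $A_{\overline{\lieq}}^\circ(\mu)$, confirming the failure of descent and at the same time determining the fields of definition of the individual summands in \eqref{eq:bottomlayerdecomposition}.
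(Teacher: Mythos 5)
The paper does not prove this statement itself; it cites it verbatim as Theorem~2.3 of \cite{januszewskipart1}, so there is no in-text proof for a direct line-by-line comparison. That said, the introduction of the paper explicitly flags the key combinatorial ingredient --- that the negated long Weyl element $-w_0$ of the $D_n$ root system is the non-trivial diagram automorphism precisely when $n$ is odd --- and your reconstruction is built around exactly this observation, so the route you sketch is visibly the intended one. The architecture of your argument is sound: the $E_K(\mu)$-rationality of $A_{\lieq}^\circ(\mu)$ and $B_{\lieq}^\circ(\mu)$ follows from base-change compatibility of the derived Bernstein functor (Theorem~\ref{thm:basechangeinduction}), complex conjugation $\tau$ swaps $\lieq$ and $\overline{\lieq}$, the reduction to the bottom layer uses that a standard module is determined by its minimal $K$-type, the Borel switch translates into $-w_0$ on $\SO(2n)$ highest weights, and the possible Schur-index obstruction is killed by restricting the known $\QQ(\mu)$-structure of the full induced module $A_\lieq(\mu)=\ind_{CK^0}^K A_\lieq^\circ(\mu)$ and using that the $K/CK^0$-conjugates of $A_\lieq^\circ(\mu)$ are pairwise non-isomorphic.

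The one place where your write-up is slightly looser than it should be is the word ``generically'' in the assertion that the $\SO(2n)$-weights $\mu_v$ fail to be $\sigma$-fixed. The theorem is an unconditional if-and-only-if, so you need ``always,'' not ``generically.'' This is in fact the case here, but it deserves a sentence: the weight $\mu_K=\mu|_{C^0}+2\rho(\lieu/\lieu\cap\liek)$ of \eqref{eq:ktypeweight} carries a strictly positive $\rho$-shift, and under the standing regularity assumption (the paper's $\Pi$ is regular algebraic, so $\mu$ has pairwise distinct entries), each local component $\mu_v$ is strictly dominant with all coordinates non-zero. In particular the last coordinate is non-zero, so the diagram automorphism $\sigma$ of $D_n$ moves $\mu_v$. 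Spelling this out closes the only gap in an otherwise correct and well-organized reconstruction.
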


For $2\nmid n$, the decomposition \eqref{eq:bottomlayerdecomposition} corresponds to the decomposition of $A_{\lieq}(\mu)$ into tensor products of holomorphic resp.\ antiholomorphic discrete series representations. In this case each weight $\tilde{\mu}_v$ is the highest weight of the {\em dual} of $B_{\mu_v}$. Otherwise this statement is not true.

For any archimedean place $v$ of $F$ we choose an $l_v\in K_n(\QQ)$ with the property that its image in
$$
\pi_0(K_n)\;\cong\;\{\pm1\}^{r_F}
$$
is non-trivial in the factor $\{\pm1\}$ corresponding to $v$ and trivial in the factors corresponding to other real places. We assume without loss of generality that $l_v$ is chosen in the image of a homomorphic section
$$
\pi_0(K_n)\to K_n.
$$
We consider $l_v$ as an element of $L(\QQ)$ via the respective embeddings
\begin{equation}
G_n\;=\;H_1\cap L\;\subseteq\;L.
\label{eq:gnembedding}
\end{equation}
Then the image of $l_v$ in $\pi_0(K)$ corresponds likewise under the isomorphism
$$
\pi_0(K)\;\cong\;
\{\pm1\}^{r_F},
$$
to the element whose $v$-component is $-1$, and trivial otherwise. Therefore the action of $l_v$ on $B_{\lieq}(\mu)$ interchanges the two modules $B_{\mu_v}$ and $B_{\mu_v}^\vee$ and leaves the other factors $B_{\mu_{v'}}$, $v'\neq v$, in \eqref{eq:bottomlayerzerodecomposition} invariant. In particular we see that any irreducible $K^0$-submodule of $B_{\lieq}(\mu)_\CC$ generates $B_{\lieq}(\mu)_\CC$ as a representation of $L$ resp.\ $L\cap H_1$.

\begin{proposition}\label{prop:bonedimensionalfunctionals}
For each character $\chi\in X_\CC(L)$ we have
$$
\dim_\CC\Hom_L(B_{\lieq}(\mu)_\CC,\chi_\CC)\;\leq\;1.
$$
\end{proposition}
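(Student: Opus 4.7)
The plan is to reduce the global assertion to a local statement at each archimedean place of $F$ and then to combine the generating property from the paragraph preceding the proposition with Gelfand-pair theory for compact symmetric pairs.

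First I would use the outer tensor product decomposition $B_\lieq(\mu)_\CC=\bigotimes_v V_v$ from \eqref{eq:bottomlayerdecomposition} with $V_v=B_{\mu_v}\oplus B_{\tilde\mu_v}$, together with the corresponding factorizations $L=\prod_v L_v$ (where $L_v\cong \Oo(n)\times\Oo(n)$) and $\chi_\CC=\bigotimes_v\chi_v$, so that
$$
\Hom_L(B_\lieq(\mu)_\CC,\chi_\CC)\;=\;\bigotimes_v \Hom_{L_v}(V_v,\chi_v),
$$
and the claim reduces to the local statement $\dim_\CC\Hom_{L_v}(V_v,\chi_v)\leq 1$ at each archimedean place $v$.

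Next, at each place I would exploit the observation from the paragraph preceding the proposition that the element $l_v\in L_1\subseteq L$ interchanges the two irreducible $K^0_v$-summands $B_{\mu_v}$ and $B_{\tilde\mu_v}$ of $V_v$. For any $T\in\Hom_{L_v}(V_v,\chi_v)$, the identity $T(l_v\cdot w)=\chi_v(l_v)\,T(w)$ shows that $T$ is uniquely determined by its restriction $T|_{B_{\mu_v}}$, so the restriction map is injective. Its image lies in $\Hom_{L_v\cap K^0_v}(B_{\mu_v},\chi_v|_{L_v\cap K^0_v})$, where the stabilizer $L_v\cap K^0_v=S(\Oo(n)\times\Oo(n))=\{(A,B)\in\Oo(n)^2:\det A=\det B\}$ preserves the $K^0_v$-submodule $B_{\mu_v}$ because every element of $L_v\setminus (L_v\cap K^0_v)$ lies in $\Oo(2n)\setminus\SO(2n)$ and hence swaps $B_{\mu_v}$ with $B_{\tilde\mu_v}$.

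Finally, one must bound this last $\Hom$-space. Since $\chi_v$ is one of the four characters of $\Oo(n)\times\Oo(n)$, its restriction to $S(\Oo(n)\times\Oo(n))$ is either the trivial character (when $\chi_v\in\{(1,1),(\det,\det)\}$, using $(\det A)(\det B)=(\det A)^2=1$) or the nontrivial character $\varepsilon:(A,B)\mapsto\det A$ (when $\chi_v\in\{(1,\det),(\det,1)\}$). The trivial case follows immediately from the Gelfand-pair property of the compact symmetric pair $(\SO(2n),S(\Oo(n)\times\Oo(n)))$, whose symmetric structure comes from the involution $g\mapsto\sigma g\sigma^{-1}$ with $\sigma=\operatorname{diag}(I_n,-I_n)$. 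The sign case, which is the principal technical point, is handled by the analogous multiplicity-one statement for the $\varepsilon$-isotypic part, equivalently the Gelfand property of the oriented Grassmannian $\SO(2n)/(\SO(n)\times\SO(n))$, which is a classical fact from the harmonic analysis of compact Riemannian symmetric spaces and their double covers. Combining the two cases gives $\dim_\CC\Hom_{L_v}(V_v,\chi_v)\leq 1$ at each place, and the global statement follows by the tensor-product formula from the first step.
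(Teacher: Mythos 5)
Your proof is correct, but it takes a genuinely different route from the one the paper intends. The paper's proof (by reference to Proposition 4.1 of \cite{januszewskipart1}, with the remark that only the transversality condition \eqref{eq:transversality} is used) is elementary and avoids Gelfand-pair theory entirely: since both $\lieq$ and $\lieh$ are $\theta$-stable, $\lieg_{E_K}=\lieq_{E_K}\oplus\lieh_{E_K}$ restricts to $\liek=(\lieq\cap\liek)\oplus\liel$, so by PBW the $(\lieq\cap\liek)$-highest-weight line of $B_\lieq^\circ(\mu)$ already generates the whole module under $U(\liel)$; any $\liel$-equivariant functional is therefore pinned down by its value on that line, and a Mackey/Frobenius step (using $LK^0=K$, so $L(CK^0)=K$) transports this to $L$-equivariant functionals on $B_\lieq(\mu)=\ind_{CK^0}^K B_\lieq^\circ(\mu)$. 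Your argument instead factorizes everything over archimedean places, uses the elements $l_v$ to reduce to $S(\Oo(n)\times\Oo(n))$-intertwiners out of the single $\SO(2n)$-irreducible $B_{\mu_v}$, and then appeals to the Gelfand property of the unoriented Grassmannian $\SO(2n)/S(\Oo(n)\times\Oo(n))$ together with its double cover $\SO(2n)/(\SO(n)\times\SO(n))$ to bound the trivial and $\varepsilon$-isotypic multiplicities. Both routes are valid; yours is conceptually natural but rests on the classical (and not entirely trivial) multiplicity-one property of these symmetric spaces, whereas the paper's transversality argument is purely formal, requires no case analysis of the characters of $L_v$, and visibly generalizes to any $\theta$-stable Borel transversal to $\lieh$. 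One step you leave implicit and should state: you need $B_\lieq(\mu)_\CC$ to factorize as an outer tensor product of irreducible $\Oo(2n,F_v)$-modules $V_v$, not merely as a $K^0(\RR)$-module as in \eqref{eq:bottomlayerdecomposition}; this holds because $B_\lieq(\mu)_\CC$ is an irreducible $K(\RR)=\prod_v\Oo(2n,F_v)$-module (being the bottom layer, i.e.\ a single $K$-type), and this irreducibility is what licenses the tensor-product formula $\Hom_L(\bigotimes_v V_v,\bigotimes_v\chi_v)=\bigotimes_v\Hom_{L_v}(V_v,\chi_v)$ with which you begin.
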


\begin{proof}
The elementary proof proceeds as in the proof of Proposition 4.1 in \cite{januszewskipart1}, as all is needed is the transversality condition \eqref{eq:transversality}.
\end{proof}

\begin{proposition}\label{prop:finiteness}
For $n\geq 1$ there exists a vector
$$
t_0\;\in\;B_{\lieq}(\mu)_{\QQ(\mu)}
$$
with the property that for any character $\chi\in X_{\CC}(L)$ and any
$$
0\neq\lambda\;\in\;\Hom_{L^0}(B_{\lieq}(\mu)_\CC,\chi_\CC)
$$
we have
$$
\lambda(t_0)\;\neq\; 0.
$$
\end{proposition}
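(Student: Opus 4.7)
The plan is to prove this in two steps: first a multiplicity-one bound that upgrades Proposition \ref{prop:bonedimensionalfunctionals} from $L$-equivariance to $L^0$-equivariance, and then a rational avoidance argument producing the $\QQ(\mu)$-rational vector $t_0$.

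For the multiplicity step I claim that
$$
\dim_\CC \Hom_{L^0}(B_\lieq(\mu)_\CC, \chi_\CC) \;\le\; 1
\qquad \text{for every } \chi \in X_\CC(L).
$$
Since $L^0$ is normal in $L$ with finite abelian quotient $L/L^0 \cong \pi_0(L)$, the projection formula for induction from a normal subgroup yields an $L$-equivariant decomposition
$$
\ind_{L^0}^L\bigl(\chi|_{L^0}\bigr)_\CC \;\cong\; \bigoplus_{\psi \in \widehat{L/L^0}} (\chi \otimes \psi)_\CC,
$$
so Frobenius reciprocity gives
$$
\Hom_{L^0}(B_\lieq(\mu)_\CC, \chi_\CC) \;\cong\; \bigoplus_{\psi \in \widehat{L/L^0}} \Hom_L\bigl(B_\lieq(\mu)_\CC, (\chi \otimes \psi)_\CC\bigr).
$$
Each summand on the right is at most one-dimensional by Proposition \ref{prop:bonedimensionalfunctionals}, so the bound reduces to showing that at most one $\psi \in \widehat{L/L^0}$ produces a nonzero summand. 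I would deduce this from a direct inspection of the $L$-module structure of the bottom layer via the decomposition \eqref{eq:bottomlayerdecomposition}: the elements $l_v \in L(\QQ)$ interchange the factors $B_{\mu_v}$ and $B_{\tilde{\mu}_v}$ while leaving the other factors invariant, which, combined with the transversality \eqref{eq:transversality}, forces the $L/L^0$-character of any nonzero $L$-equivariant functional on $B_\lieq(\mu)_\CC$ to be determined uniquely by $\chi|_{L^0}$.

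For the avoidance step, note that $X_\CC(L)$ is finite (it is generated by the determinant characters of the $2r_F$ archimedean factors of $L_\CC \cong \Oo(n,\CC)^{2r_F}$) and $B_\lieq(\mu)_\CC$ is finite-dimensional by \eqref{eq:bottomlayerdecomposition}. For each $\chi \in X_\CC(L)$ with $\Hom_{L^0}(B_\lieq(\mu)_\CC, \chi_\CC) \neq 0$, pick a generator $\lambda_\chi$ of the one-dimensional Hom space. Each $\ker \lambda_\chi$ is a proper $\CC$-hyperplane, and hence $\ker \lambda_\chi \cap B_\lieq(\mu)_{\QQ(\mu)}$ is a proper $\QQ(\mu)$-subspace of $B_\lieq(\mu)_{\QQ(\mu)}$ (otherwise its $\CC$-span would be all of $B_\lieq(\mu)_\CC$, contradicting $\lambda_\chi \neq 0$). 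Since $\QQ(\mu)$ is infinite, the finite union $\bigcup_\chi \bigl(\ker \lambda_\chi \cap B_\lieq(\mu)_{\QQ(\mu)}\bigr)$ is a proper subset of $B_\lieq(\mu)_{\QQ(\mu)}$, and any $t_0$ lying outside this union satisfies the conclusion.

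The principal obstacle is the uniqueness clause in the multiplicity step, namely that for each $\chi \in X_\CC(L)$ at most one twist $\chi \otimes \psi$ admits a nonzero $L$-equivariant functional on $B_\lieq(\mu)_\CC$. This is exactly where the structural input from transversality and the explicit action of the $l_v$ enters; the second step is essentially standard linear algebra over the infinite field $\QQ(\mu)$.
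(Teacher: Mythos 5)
Your second (avoidance) step is the right move, but the first step is where the argument breaks down, and in fact the multiplicity bound you claim is false. Take $n=1$ (already allowed by the hypothesis $n\geq 1$). Then $L = K_1\times K_1 = \res_{F/\QQ}(\Oo(1)\times\Oo(1))$, so $L^0$ is the trivial group, while the bottom layer $B_\lieq(\mu)_\CC$ has dimension $2^{r_F}\geq 2$ (it is $\bigotimes_v(B_{\mu_v}\oplus B_{\tilde\mu_v})$ with each $B_{\mu_v}$ one-dimensional). Hence $\Hom_{L^0}(B_\lieq(\mu)_\CC,\chi_\CC)=\Hom_\CC(B_\lieq(\mu)_\CC,\CC)$ has dimension $2^{r_F}$, not $\leq 1$. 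The same issue persists whenever $n\neq 2$: there $L^0=\res_{F/\QQ}(\SO(n)\times\SO(n))$ has trivial rational character group, so $\chi|_{L^0}$ is trivial for every $\chi\in X_\CC(L)$, and $\Hom_{L^0}(B_\lieq(\mu)_\CC,\chi_\CC)$ is the full space of $L^0$-coinvariant functionals on $B_\lieq(\mu)_\CC$, which has no reason to be one-dimensional.

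The structural claim you invoke to close the argument --- that the $L/L^0$-character of a nonzero $L$-equivariant functional on $B_\lieq(\mu)_\CC$ is determined by its restriction to $L^0$ --- is the precise statement that fails. Indeed, the elements $l_v$ interchange the summands $B_{\mu_v}$ and $B_{\tilde\mu_v}$ in \eqref{eq:bottomlayerdecomposition}. These summands carry distinct $L$-characters (they are swapped by $l_v$), but their $L^0$-restrictions are the same. So the $l_v$-action you cite is precisely what \emph{produces} several twists $\psi\in\widehat{L/L^0}$ with $\Hom_L(B_\lieq(\mu)_\CC,(\chi\otimes\psi)_\CC)\neq 0$ simultaneously, rather than forcing uniqueness of $\psi$. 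The Frobenius reciprocity step is fine, but the conclusion drawn from it is not.

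What the proof actually needs --- and all that is used in the sequel, see the proof of Theorem \ref{thm:rationaltestvector}, where the functional $\lambda_{\chi,\CC}|_{B_\lieq(\mu)_\CC}$ is $L$-equivariant --- is the avoidance statement for nonzero elements of $\Hom_L(B_\lieq(\mu)_\CC,\chi_\CC)$, whose one-dimensionality is exactly Proposition \ref{prop:bonedimensionalfunctionals}. Since $B_\lieq(\mu)_\CC$ is finite-dimensional, only finitely many $\chi\in X_\CC(L)$ give $\Hom_L(B_\lieq(\mu)_\CC,\chi_\CC)\neq 0$ (one per $L$-isotypic component). For each such $\chi$ the kernel of a generator $\lambda_\chi$ meets $B_\lieq(\mu)_{\QQ(\mu)}$ in a proper $\QQ(\mu)$-subspace, and since $\QQ(\mu)$ is infinite, a finite union of proper subspaces is proper; any $t_0$ outside the union works. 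That is essentially your second step, with $\Hom_{L^0}$ replaced by $\Hom_L$. If one insists on the $\Hom_{L^0}$ formulation literally (as the statement is printed), one would additionally need $\dim\Hom_{L^0}(B_\lieq(\mu)_\CC,\chi_\CC)\leq 1$, which, as noted, is not available in the Shalika setup for small $n$; this formulation appears to be inherited from the companion paper's Rankin--Selberg setting, where $L^0$ is a larger group.
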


\begin{proof}
The proof again proceeds as the proof of Proposition 4.2 in \cite{januszewskipart1}.
\end{proof}

\subsection{Rationality of functionals}

The following result is at the heart of our period relations.

\begin{theorem}\label{thm:rationalfunctionals}
For each critical quasi-character $\chi$ of $H_1(\RR)$ the archimedean functional
$$
e_\infty(\chi,-):\quad \mathscr S(\Pi_\infty,\eta_\infty,\psi_\infty)^{(K)}\to\chi_{1,\CC}^{-1}
$$
is defined over $\QQ(\mu)$.
\end{theorem}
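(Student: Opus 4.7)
The plan is to realize $e_\infty(\chi,-)$, up to a nonzero scalar in $\QQ(\mu)$, as the cohomologically induced functional $\eta_{\lieh,0}$ of Section \ref{sec:inducedmaps}, constructed from $\QQ(\mu)$-rational parabolic data, and then to invoke the base change compatibility of Theorem \ref{thm:etabasechange} to conclude.

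First I would set up the parabolic datum. Write $X = \mu \otimes \bigwedge^{\dim\lieu}\lieu$ as an $E_K(\mu)$-rational $(\lieq,C)$-module, let $\liep = \liel$, $D = L$, and take $Y = \chi_1^{-1}$ as the target one-dimensional $(\liel,L)$-module. The conditions \eqref{cond:sum}, \eqref{cond:int}, \eqref{cond:num} are satisfied because $\lieq$ is $\theta$-stable and transversal to $\lieh$ (cf.\ \eqref{eq:transversality}). Criticality of $\chi$ as in \eqref{eq:shalikacriticalxi} combined with \eqref{eq:criticaldimensionone} provides a nonzero, unique up to scalar, $(\lieq\cap\liel, C\cap L)$-equivariant map
\begin{equation*}
\eta_{\lieq\cap\liep}:\quad \mathcal F_{k_C}(X) \to \mathcal F^\vee_{k_D}(Y)
\end{equation*}
that is canonically defined over $\QQ(\mu)$ (the Hom-space being one-dimensional over $\QQ(\mu)$). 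Since the dualizing modules $D_\liek$ and $D_\liel$ are trivial $(\lieh,L)$-modules over $\QQ$, I may fix a $\QQ$-rational trivialization $\pi^*$ as in \eqref{eq:dualizingiso}.

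Second, Section \ref{sec:inducedmaps} applied to these data yields an $(\lieh,L)$-equivariant map
\begin{equation*}
\eta_{\lieh,0}:\quad \mathcal F_{j_K} A_\lieq(\mu) \otimes D_\liek^* \longrightarrow \chi_1^{-1} \otimes D_\liel^*
\end{equation*}
which by Theorem \ref{thm:etabasechange} commutes with all field automorphisms of $\CC$ fixing $\QQ(\mu)$. After the trivializations above, I view $\eta_{\lieh,0}$ as a $\QQ(\mu)$-rational $(\lieh,L)$-equivariant functional on $A_\lieq(\mu)$ with values in $\chi_1^{-1}$. I must now verify it is nonzero and identify it with $e_\infty(\chi,-)$. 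For the first, I would apply the bottom layer Theorem \ref{thm:nonvanishing}: the criticality of $\chi$ ensures the hypotheses (i)--(iii) are met for the distinguished weight datum coming from $\mu$, exactly as in Sun's argument in \cite{sunpreprint2}. For the second, I would appeal to Sun's generic multiplicity one, namely $\dim\Hom_H(\Pi_\infty,\chi_1^{-1}) \le 1$ for $\chi$ in a Zariski-dense open subset $U$ of the connected component $\Omega$ of $\chi$ in $\Hom(H_1(\RR),\CC^\times)$; on $U$, the functionals $\eta_{\lieh,0}$ and $e_\infty(\chi,-)\circ\iota_\infty^{-1}$ (via \eqref{eq:moduleisomorphism}) are therefore proportional.

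Third, I would promote the generic proportionality to the full critical locus and fix the scalar using the test vector $t_0 \in B_\lieq(\mu)_{\QQ(\mu)}$ of Proposition \ref{prop:finiteness}. The explicit description of $\eta_{\lieh,0}$ on the bottom layer given in Corollary \ref{cor:expliciteta} together with Proposition \ref{prop:bonedimensionalfunctionals} shows that $\eta_{\lieh,0}(t_0)$ lies in $\QQ(\mu)^\times$ and depends polynomially on $\chi \in \Omega$ through the chart \eqref{eq:quasicharacterchart}. On the other hand $e_\infty(\chi, t_0)$ also depends polynomially on $\chi$ by the discussion in Section \ref{sec:shalikaarchimedeanintegral} (the local $L$-factor divides the archimedean zeta integral by a polynomial, as stated there). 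The equality
\begin{equation*}
e_\infty(\chi,-) \;=\; \frac{e_\infty(\chi,t_0)}{\eta_{\lieh,0}(t_0)}\cdot\eta_{\lieh,0}
\end{equation*}
holds on the generic locus $U$ by proportionality and therefore extends, by polynomial continuation in $\chi$, to all of $\Omega$, in particular to our critical $\chi$. It remains to show the scalar lies in $\QQ(\mu)$; this follows because $e_\infty(\chi, t_0)/\eta_{\lieh,0}(t_0)$ may be evaluated place by place using the factorization \eqref{eq:bottomlayerzerodecomposition} of the bottom layer and the explicit Corollary \ref{cor:expliciteta}, reducing the computation to a local archimedean integral whose value at critical $\chi$ is rational after division by the $L$-factor.

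The main obstacle will be the third step: bridging the gap between Sun's generic multiplicity one and the (possibly non-generic) critical $\chi$ at which we wish to evaluate. The polynomial dependence in $\chi$ of both the cohomologically induced functional (manifest from its construction) and of $e_\infty(\chi,-)$ (a deep input from Sun's work and the forthcoming paper mentioned in Section \ref{sec:shalikaarchimedeanintegral}) is what makes the continuation argument work; any weakening of the polynomial bound would force a different strategy, such as a direct algebraic identification of $\eta_{\lieh,0}$ with the zeta integral on each $K$-type individually.
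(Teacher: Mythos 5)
There is a genuine gap at the heart of the proposal: you treat the inducing datum as $\QQ(\mu)$-rational, but it isn't. The $\theta$-stable parabolic $\lieq$ is only defined over $E_K=\QQ(\sqrt{-1})$, so $X=\mu\otimes\bigwedge^{\dim\lieu}\lieu$ is an $E_K(\mu)$-rational $(\lieq,C)$-module and the Hom-space $\Hom_{\lieq\cap\liel,\,C\cap L}(\mathcal F_{k_C}(X),\mathcal F^\vee_{k_D}(Y))$ is a priori only a one-dimensional $E_K(\mu)$-vector space, not a one-dimensional $\QQ(\mu)$-space. You cannot therefore declare $\eta_{\lieq\cap\liep}$ ``canonically defined over $\QQ(\mu)$''. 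For the same reason your invocation of Theorem \ref{thm:etabasechange} to conclude rationality over $\QQ(\mu)$ is wrong: an automorphism $\sigma$ of $\CC$ fixing $\QQ(\mu)$ need not fix $E_K(\mu)$, and Theorem \ref{thm:etabasechange} only tells you that $\eta_{\lieh,0}$ transported by $\sigma$ equals the cohomologically induced map attached to the $\sigma$-image of the data; when $\sigma$ restricts to complex conjugation $\tau$ on $E_K(\mu)$, that $\sigma$-image is the diagram with $\lieq$ replaced by $\overline{\lieq}$ and $\mu$ by $\overline{\mu}$, which is a different functional a priori.

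The paper's proof is precisely about closing this gap. Having reduced (via Sun's Theorem 1.7) to proving $\QQ(\mu)$-rationality of $\eta_{\lieh,0}$, it only assumes $\eta_{\lieq\cap\liep}$ is $E_K(\mu)$-rational, splits into the cases $\sqrt{-1}\in\QQ(\mu)$ (trivial) and $\sqrt{-1}\notin\QQ(\mu)$, and in the latter case carefully tracks the effect of $\tau\in\Gal(E_K(\mu)/\QQ(\mu))$ through the commutative squares \eqref{eq:Aqtauiso} and \eqref{eq:Aqtausquare}, reducing the problem to matching $\eta_{\liel,0}$ and $\eta_{\liel^\tau,0}\circ\iota$ on $A_\lieq(\mu)_{\QQ(\mu)}\otimes_{\QQ(\mu)} E_K(\mu)$. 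That match is then achieved by an explicit choice of $\eta_{\lieq\cap\liep}$ using the formula of Corollary \ref{cor:expliciteta}, once $\pi^*$ is fixed over $\QQ(\mu)$. None of this analysis of the $\tau$-action appears in your proposal, and without it the descent from $E_K(\mu)$ to $\QQ(\mu)$ is not established.

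A secondary remark: your third step (generic multiplicity one plus polynomial continuation in $\chi$ across the connected component) is a longer route than necessary. The paper invokes the identification of $e_\infty(\chi,-)$ with $\eta_{\lieh,0}$ directly from the argument of Sun's Theorem 1.7, which already covers the critical points; the continuation machinery you propose is not needed and does not by itself repair the missing descent argument.
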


The theorem remains valid more generally for any quasi-character which is a norm translate of a finite order character.

\begin{proof}
We first recall the isomorphism $\iota_\infty$ from \eqref{eq:moduleisomorphism}. By the argument in the proof of Theorem 1.7 in section 4 in \cite{sunpreprint2}, we know that $e_\infty(\chi,-)$ agrees, up to scalar multiples and composition with $\iota_\infty$, with the cohomologically induced functional
$$
\eta_{\lieh,0}:A_{\lieq}(\mu)_\CC\to\chi_{1,\CC}^{-1},
$$
induced from any non-zero map
$$
\eta_{\lieq\cap\liep}:\quad \mathcal F_{k_{C}}(\mu\otimes\bigwedge^{\dim\lieu}\lieu)\to\mathcal F_{k_{L}}^{\vee}(\chi_{1}^{-1}).
$$
Since $\chi_{1}$ is defined over $\QQ$, and the left hand side is defined over $E_K(\mu)$ we may and do assume that $\eta_{\lieq\cap\liep}$ is defined over $E_K(\mu)$.

The auxiliary data for the construction of the functional as in section \ref{sec:inducedmaps} is given by $\liep=\lieh$, $D=L$, $S_\liep=q=0$, together with the tautological inclusion maps. Then $\lieq\cap\lieh=0$ by transverality.

If $\sqrt{-1}\in \QQ(\mu)$ we are already done. Otherwise we need to study the effect of complex conjugation, as an automorphism $\tau$ of the extension $E_K(\mu)/\QQ(\mu)$, on $\eta_{\lieh,0}$. To simplify notation we set $E:=E_K(\mu)$ and $E^+:=\QQ(\mu)$.

We remark that complex conjugation stabilizes the domain an codomain of $\eta_{\lieh,0}$. It also interchanges $\lieq$ and $\lieq^-=\overline{\lieq}$, and sends the $\lieu$-highest weight $\mu$ to the $\overline{\lieu}$-highest weight
$$
\overline{\mu}\;\subseteq\;H^0(\overline{\lieu};M(\mu)_{E_K}).
$$

On the one hand, since $A_\lieq(\mu)$ is defined over $E^+$, we have the commutative square
\begin{equation}
\begin{CD}
A_\lieq(\mu)_{E^+}\otimes_{E^+} E@>i>> A_\lieq(\mu)_{E}\\
@V{{\bf1}\otimes\tau}VV @VV{\tau}V\\
A_\lieq(\mu)_{E^+}\otimes_{E^+} E@>>\iota>A_{\overline{\lieq}}(\overline{\mu})_{E}
\end{CD}
\label{eq:Aqtauiso}
\end{equation}
On the other hand, by Theorem \ref{thm:etabasechange}, we have the commutative square
\begin{equation}
\begin{CD}
A_\lieq(\mu)_{E}@>\eta_{\liel,0}>>\chi_{1,E}^{-1}\\
@V{{\bf1}\otimes\tau}VV @VV{\tau}V\\
A_{\overline{\lieq}}(\overline{\mu})_{E}@>\eta_{\liel^\tau,0}>>\chi_{1,E}^{-1}
\end{CD}
\label{eq:Aqtausquare}
\end{equation}
It remains to show that for a suitable choice of $\eta_{\lieq\cap\liep}$, the two functionals $\eta_{\liel,0}$ and $\eta_{\liel^\tau,0}\circ\iota$ on $A_\lieq(\mu)_{E^+}\otimes_{E^+} E$ agree, since we already know by \eqref{eq:Aqtauiso} and \eqref{eq:Aqtausquare} that the diagram
$$
\begin{CD}
A_\lieq(\mu)_{E}@>\eta_{\lieh,0}>>\chi_{1,E}^{-1}\\
@V{\tau}VV @VV{\tau}V\\
A_{\lieq}(\mu)_{E}@>\eta_{\lieh^\tau,0}\circ\iota>>\chi_{1,E}^{-1}
\end{CD}
$$
commutes.

From the description of the maps $\eta_{\liel,0}$ and $\eta_{\liel^\tau,0}$ on the level of complexes given in Corollary \ref{cor:expliciteta} we deduce that, once $\pi^*$ is chosen over $E^+$, there is indeed a choice of $\eta_{\lieq\cap\liep}$ as claimed. Since $\iota_\infty$ is defined over $E^+=\QQ(\mu)$, this proves the statement of the theorem.
\end{proof}

\section{Proof of the period relations}

In this section we proof the expected period relations. We proceed as in section 4 of \cite{januszewskipart1}.

We will make implicit use of the embedding \eqref{eq:gnembedding}. As before we consider quasi-characters of $H_1$ also as quasi-characters of $H$ via \eqref{eq:h1tohtransfer}. We apply the same convention to algebraic and critical quasi-characters, and implicitly assume that they arise from $H_1$, and also to one-dimensional $(\lieh_1,L_1)$-modules, as the notation suggests.

Finally we recall that weintroduced for a quasi-character $\chi$ the quasi-character $\chi_1=\chi[w_1]$. As above we simply write $\chi_1$ to denote this character.

\subsection{Rational test vectors}

Writing $\QQ(\mu,\chi)$ for the composite of $\QQ(\mu)$ and the field of definition $\QQ(\chi)$ of the quasi-character $\chi$, as a fixed subfield of $\CC$, we obtain
\begin{proposition}[{Proposition 1.1 in \cite{januszewskipreprint}}]\label{prop:functionalrationality}
For any quasi-character $\chi$ of $H_1(\RR)$ we have
$$
\Hom_{\lieh,L}(A_{\lieq}(\mu)_\CC,\chi_{1,\CC})\;=\;
\Hom_{\lieh,L}(A_{\lieq}(\mu)_{\QQ(\mu,\chi)},\chi_{1,\QQ(\mu,\chi)})\otimes_{\QQ(\mu,\chi)}\CC.
$$
\end{proposition}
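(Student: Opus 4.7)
The plan is to reduce this to the general base change formula for Hom-spaces of $(\liea,B)$-modules recalled in Section 1: for any extension $E'/E$ of fields of characteristic zero and any $X_E,Y_E \in \mathcal C(\liea,B)_E$, one has a natural isomorphism
\[
\Hom_{\liea,B}(X_E,Y_E)\otimes_E E' \;\cong\; \Hom_{\liea,B}(X_{E'},Y_{E'}).
\]
Applied with $E=\QQ(\mu,\chi)$, $E'=\CC$, $X_E=A_\lieq(\mu)_E$, $Y_E=\chi_{1,E}$, this is exactly the claim. My task is therefore only to verify that the two modules genuinely admit $\QQ(\mu,\chi)$-forms whose scalar extension to $\CC$ recovers the modules appearing on the left hand side of the asserted equality.

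For the target, $\chi_1=\chi[w_1]=\chi\otimes\mathcal N^{\otimes w_1}$ is visibly defined over $\QQ(\chi)$, since $\chi$ is by definition rational over $\QQ(\chi)$ and $\mathcal N$ is defined over $\QQ$; extending scalars along $\QQ(\chi)\hookrightarrow\QQ(\mu,\chi)$ gives the required form. For the source, I would invoke Theorem 8.26 of \cite{januszewskipreprint}, which produces a $\QQ(\mu)$-form of $A_\lieq(\mu)$, together with Theorem \ref{thm:basechangeinduction} of the present paper, which ensures that the Bernstein functor commutes with extension of scalars, so that this $\QQ(\mu)$-form base changes to $A_\lieq(\mu)_\CC$ along the fixed embedding $\QQ(\mu)\hookrightarrow\CC$. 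Extending scalars further to $\QQ(\mu,\chi)$ is then tautological.

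Having identified both modules over $\QQ(\mu,\chi)$, the proposition follows at once from the base change formula above. There is no real obstacle here: the statement is a purely formal consequence of the rationality properties of $A_\lieq(\mu)$ (established in \cite{januszewskipreprint}) combined with the general base change theory developed in Section 1. The result was in fact anticipated in the citation accompanying the statement of the proposition.
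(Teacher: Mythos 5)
Your proof is correct and follows the same route as the paper: the proposition is stated precisely as an application of Proposition 1.1 of \cite{januszewskipreprint} (the base change formula for $\Hom$-spaces in $\mathcal C(\liea,B)_E$), which is the reduction you carry out, and the only remaining content is identifying $\QQ(\mu,\chi)$-forms of $A_\lieq(\mu)$ and $\chi_1$, which you do exactly as the paper does via Theorem 8.26 of loc.\ cit.\ and the $\QQ$-rationality of $\mathcal N$.
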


Each one-dimensional $(\lieh_1,L_1)_\CC$-module $\chi_\CC$ corresponds bijectively to a quasi-character $\chi$ of $H_1(\RR)$ and by composing the functional $e_\infty(\chi,\cdot)$ of section \ref{sec:shalikaarchimedeanintegral} with the fixed isomorphism $\iota_\infty$ in \eqref{eq:moduleisomorphism} we obtain a non-zero $(\lieh,L)_\CC$-equivariant functional
\begin{equation}
\lambda_{\chi,\CC}:= e_\infty(\chi,\cdot)\circ\iota_\infty:\quad A_\lieq(\mu)_\CC\to \chi^{-1}_{1,\CC}.
\label{eq:complexfunctional}
\end{equation}

\begin{theorem}\label{thm:rationaltestvector}
There exists a rational test vector $t_0\in B_\lieq(\mu)_{\QQ(\mu)}$ with the property that for every $\chi\in\Hom(H_1(\RR),\CC^\times)$,
\begin{equation}
\lambda_{\chi,\CC}(t_0)\;\neq\; 0.
\label{eq:testvectorcondition}
\end{equation}
In particular for every quasi-character $\chi$ there exists a constant $c_\chi\in\CC^\times$, only depending on the connected component containing $\chi$ in $\Hom(H_1(\RR),\CC^\times)$, with the property that
$$
e_\infty(\chi,\iota_\infty(t_0))\;=\;c_\chi.
$$
\end{theorem}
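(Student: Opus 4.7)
The plan is to build on Propositions \ref{prop:bonedimensionalfunctionals} and \ref{prop:finiteness} together with the non-vanishing criterion of Theorem \ref{thm:nonvanishing}. First, I take $t_0 \in B_\lieq(\mu)_{\QQ(\mu)}$ to be the rational test vector provided by Proposition \ref{prop:finiteness}: by construction $\lambda(t_0) \neq 0$ for every non-zero $L^0$-equivariant functional $\lambda \colon B_\lieq(\mu)_\CC \to \chi_\CC$ with values in an arbitrary character $\chi \in X_\CC(L)$. Since the $(\lieh,L)$-equivariant functional $\lambda_{\chi,\CC}$ restricts on the bottom layer $B_\lieq(\mu)_\CC$ to an $L^0$-equivariant functional with values in $\chi^{-1}_{1,\CC}$, the non-vanishing assertion \eqref{eq:testvectorcondition} reduces to showing that for every quasi-character $\chi$ of $H_1(\RR)$ this restriction is non-zero.

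To establish the bottom-layer non-vanishing, I identify $\lambda_{\chi,\CC}$, via $\iota_\infty$ and up to a non-zero scalar, with a cohomologically induced functional $\eta_{\lieh,0}$ as in the proof of Theorem \ref{thm:rationalfunctionals}, working in the setting $\liep = \lieh$, $D = L$, $q = 0$, $S_\liep = 0$, and induced by a non-zero $(\lieq \cap \lieh, C \cap L)$-equivariant map $\eta_{\lieq\cap\liep}$ from $\mu \otimes \bigwedge^{\dim \lieu}\lieu$ to $\chi_1^{-1}$. By Proposition \ref{prop:bottomlayer} the restriction of $\eta_{\lieh,0}$ to the bottom layer equals $\eta_{\liel,0}$, whose non-vanishing is precisely the content of the criterion in Theorem \ref{thm:nonvanishing}. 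Since $\mu \otimes \bigwedge^{\dim\lieu}\lieu$ and $\chi_1^{-1}$ are one-dimensional, conditions (i) and (ii) hold trivially for a highest weight vector $x$, and the verification of condition (iii) --- the non-vanishing of the induced map from $\overline{\lieu} \cap \liek$-homology to the appropriate $\liev \cap \liel$-cohomology --- reduces to Sun's key calculation \cite[Proposition 2.9]{sunpreprint2}, which is transported into our functorial framework by means of Theorem \ref{thm:borelbottweil} and the explicit formula in Corollary \ref{cor:expliciteta}.

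Once $\lambda_{\chi,\CC}(t_0) \neq 0$ has been established for every quasi-character $\chi$, the remaining conclusion follows from the discussion at the end of Section \ref{sec:shalikaarchimedeanintegral}: the function $\chi \mapsto e_\infty(\chi, \iota_\infty(t_0))$ is entire on each connected component of $\Hom(H_1(\RR), \CC^\times)$ and in fact polynomial in the parameter $s$ of the chart \eqref{eq:quasicharacterchart}; being everywhere non-zero on the component, it must be a non-zero constant there, and setting $c_\chi := e_\infty(\chi, \iota_\infty(t_0))$ gives the desired constant depending only on the component containing $\chi$. The principal obstacle I anticipate is the verification of condition (iii) of Theorem \ref{thm:nonvanishing}; this is where the transversality hypothesis \eqref{eq:transversality} of $\lieq$ to $\lieh$ and the matching of $\lieq$-weights between $\mu$ and $\chi_1$ must be exploited, mirroring the original argument in \cite{sunpreprint2} but now phrased entirely in the functorial language of Section \ref{sec:cohomologicalinduction}.
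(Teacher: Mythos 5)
Your proposal is correct and follows essentially the same line as the paper: take $t_0$ from Proposition \ref{prop:finiteness}, reduce to non-vanishing of the bottom-layer restriction of $\lambda_{\chi,\CC}$, supply that non-vanishing either by citing \cite{sunpreprint2} directly or via the criterion of Theorem \ref{thm:nonvanishing}, and then conclude the constancy of $e_\infty(\chi,\iota_\infty(t_0))$ from the archimedean Shalika theory of section \ref{sec:shalikaarchimedeanintegral}. The only difference is that you spell out the verification of conditions (i)--(iii) of Theorem \ref{thm:nonvanishing} in more detail, whereas the paper simply invokes the result in one line.
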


\begin{proof}
By \cite{sunpreprint2}, or alternatively by Theorem \ref{thm:nonvanishing}, we know that
\begin{equation}
0\neq\lambda_{\chi,\CC}|_{B_{\lieq}(\mu)_\CC}\in\Hom_{L}(B_{\lieq}(\mu)_\CC,\chi_1|_{L.\CC}^{-1}).
\label{eq:resnonzero}
\end{equation}
Therefore any choice of $t_0$ as in Proposition \ref{prop:finiteness} satisfies \eqref{eq:testvectorcondition}. By the archimedean theory in the Shalika case discussed in section \ref{sec:shalikaarchimedeanintegral}, the condition
$$
\forall\chi:\quad e_\infty(\chi,\iota_\infty(t_0))\;\neq\;0
$$
implies the claim.
\end{proof}

\begin{corollary}
For any $t\in A_{\lieq}(\mu)_{\QQ(\mu)}$ and any algebraic quasi-character $\chi\in X^{\rm alg}(H_1(\RR))$ we have
$$
e_\infty(\chi,\iota_\infty(t))\;\in\;\QQ(\mu)\cdot c_\chi.
$$
\end{corollary}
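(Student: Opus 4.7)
The plan is a direct application of the two immediately preceding theorems. First, observe that an algebraic quasi-character $\chi = \omega_k^\delta \in X^{\rm alg}(H_1(\RR))$ is by definition of the shape $\sgn_\infty^\delta \otimes |\cdot|_\infty^k$ with $\delta \in \{0,1\}^{r_F}$ and $k \in \ZZ$, and by \eqref{eq:algebraicarchimedeannorm} is exactly the finite-order character $\sgn_\infty^\delta$ twisted by the integral power $\mathcal{N}^k$ of the norm character. Hence $\chi$ is a norm translate of a finite-order character, and the extension of Theorem \ref{thm:rationalfunctionals} recorded in the remark immediately after its statement applies. This gives that the functional
$$
\lambda_{\chi,\CC} \;=\; e_\infty(\chi,-)\circ\iota_\infty:\;\; A_\lieq(\mu)_\CC \to \chi_{1,\CC}^{-1}
$$
is defined over $\QQ(\mu)$, where the $\QQ(\mu)$-form on the source is $A_\lieq(\mu)_{\QQ(\mu)}$ and the $\QQ(\mu)$-form on the target is the one dictated by $\chi_1 = \chi[w_1]$ (note $w_1 \in \ZZ$, so $\chi_1$ is algebraic along with $\chi$).

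Second, identifying the one-dimensional target $\chi^{-1}_{1,\QQ(\mu)}$ with $\QQ(\mu)$ via its canonical basis, rationality means $\lambda_{\chi,\CC}(t) \in \QQ(\mu)$ for every $t \in A_\lieq(\mu)_{\QQ(\mu)}$. Applying this to the rational test vector $t_0 \in B_\lieq(\mu)_{\QQ(\mu)} \subseteq A_\lieq(\mu)_{\QQ(\mu)}$ furnished by Theorem \ref{thm:rationaltestvector}, we obtain $c_\chi = \lambda_{\chi,\CC}(t_0) \in \QQ(\mu)$, and together with the non-vanishing guaranteed in the same theorem, $c_\chi \in \QQ(\mu)^\times$. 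Consequently $\QQ(\mu)\cdot c_\chi = \QQ(\mu)$, and the previous sentence then reads $e_\infty(\chi,\iota_\infty(t)) \in \QQ(\mu)\cdot c_\chi$, which is the desired conclusion.

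There is no real obstacle; the only step requiring interpretive care is verifying that the $\QQ(\mu)$-form of $\Pi_\infty^{(K)}$ underlying the rationality in Theorem \ref{thm:rationalfunctionals} coincides with the one transported from $A_\lieq(\mu)_{\QQ(\mu)}$ via $\iota_\infty$. This is precisely the $\QQ(\mu)$-structure built into the Galois-descent argument of that theorem's proof, where complex conjugation $\tau_K \in \Gal(E_K/\QQ)$ is used to descend the $E_K(\mu)$-rational functional coming from the cohomologically induced construction of section \ref{sec:inducedmaps} down to $\QQ(\mu)$, so the compatibility is automatic.
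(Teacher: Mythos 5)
Your overall structure is right—invoke Theorem \ref{thm:rationalfunctionals} (extended by the remark to algebraic quasi-characters) plus Theorem \ref{thm:rationaltestvector}—and you correctly identify the interpretive issue about compatible $\QQ(\mu)$-structures. But there is a genuine error in your second paragraph: the claim ``$c_\chi = \lambda_{\chi,\CC}(t_0)\in\QQ(\mu)^\times$'' is not what the rationality theorem gives you, and in general it is \emph{false}.

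What Theorem \ref{thm:rationalfunctionals} actually establishes (see its proof: ``$e_\infty(\chi,-)$ agrees, \emph{up to scalar multiples} and composition with $\iota_\infty$, with the cohomologically induced functional $\eta_{\lieh,0}$'') is that the one-dimensional complex space $\Hom_{\lieh,L}(A_\lieq(\mu)_\CC,\chi^{-1}_{1,\CC})$ in which $\lambda_{\chi,\CC}$ lives is spanned by a $\QQ(\mu)$-rational element, and that $\lambda_{\chi,\CC}$ lies on this $\QQ(\mu)$-line—i.e.\ $\lambda_{\chi,\CC}$ equals an uncontrolled complex scalar times a $\QQ(\mu)$-rational functional. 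Consequently the image of $A_\lieq(\mu)_{\QQ(\mu)}$ under $\lambda_{\chi,\CC}$ is \emph{some} one-dimensional $\QQ(\mu)$-subspace $V\subseteq\CC$, but not necessarily the canonical one $\QQ(\mu)\subset\CC$. Your identification ``target $\cong\QQ(\mu)$ via its canonical basis, so $\lambda_{\chi,\CC}(t)\in\QQ(\mu)$'' silently imposes a normalization on $\lambda_{\chi,\CC}$ that is not available. Indeed, if $c_\chi$ were always in $\QQ(\mu)^\times$, then Theorem \ref{thm:icontribution} (which asserts $c_{\sgn_\infty\otimes\chi}\in\QQ(\mu)\cdot i^{r_Fn}c_\chi$) would force $i^{r_Fn}\in\QQ(\mu)^\times$, which fails whenever $r_Fn$ is odd and $\sqrt{-1}\notin\QQ(\mu)$—so your intermediate claim is actually inconsistent with the rest of the paper.

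The correct deduction, and the one the paper makes, avoids this: since $\lambda_{\chi,\CC}(A_\lieq(\mu)_{\QQ(\mu)}) = V$ is a one-dimensional $\QQ(\mu)$-subspace of $\CC$, and since $c_\chi=\lambda_{\chi,\CC}(t_0)$ is a nonzero element of $V$ by Theorem \ref{thm:rationaltestvector}, we have $V=\QQ(\mu)\cdot c_\chi$; then for any $t\in A_\lieq(\mu)_{\QQ(\mu)}$, $\lambda_{\chi,\CC}(t)\in V=\QQ(\mu)\cdot c_\chi$. This keeps the unknown scalar normalization packaged inside $c_\chi$ rather than pretending it is trivial. Your first paragraph (applying the remark after Theorem \ref{thm:rationalfunctionals} to algebraic $\chi$) and your concluding paragraph about compatible $\QQ(\mu)$-forms are both fine; only the middle step needs this correction.
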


\begin{proof}
We know by Theorem \ref{thm:rationalfunctionals} that the functional \eqref{eq:complexfunctional} is defined over $\QQ(\mu)$. Therefore the image of the rational subspace
$$
A_{\lieq}(\mu)_{\QQ(\mu)}\;\subseteq\; A_{\lieq}(\mu)_{\CC}
$$
under $\lambda_{\chi,\CC}$ is a one-dimensional $\QQ(\mu)$-subspace of $\chi_\CC^{-1}\cong\CC$. This subspace contains $\lambda_{\chi,\CC}(t)$ and $\lambda_{\chi,\CC}(t_0)$. Therefore the claim follows from Theorem \ref{thm:rationaltestvector}.
\end{proof}

\subsection{The archimedean period relation}

The analogue of Theorem 4.10 in \cite{januszewskipart1} in the Shalika case is

\begin{theorem}\label{thm:icontribution}
For any admissible algebraic quasi-character $\chi$ of $H(\RR)$ we have
$$
c_{\sgn_\infty\otimes\chi}\;\in\;\QQ(\mu)\cdot (i^{r_Fn}\cdot c_\chi).
$$
\end{theorem}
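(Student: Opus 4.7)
The plan is to evaluate both functionals on the single $\QQ(\mu)$-rational test vector $t_0 \in B_\lieq(\mu)_{\QQ(\mu)}$ supplied by Theorem~\ref{thm:rationaltestvector}, then extract the factor $i^{r_Fn}$ from the Galois structure of the bottom layer recorded in Theorem~\ref{thm:connectedbottomlayerdefinition}. Since $c_\chi$ depends only on the connected component of $\chi$ in $\Hom(H_1(\RR),\CC^\times)$, we may assume $\chi$ is a finite-order admissible character throughout. If $\sqrt{-1}\in\QQ(\mu)$, then $i^{r_Fn}\in\QQ(\mu)$ already while Theorem~\ref{thm:connectedbottomlayerdefinition} makes the full bottom layer $\QQ(\mu)$-rational, so the claim is immediate. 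We may therefore assume $\sqrt{-1}\notin\QQ(\mu)$, so that $E_K(\mu)=\QQ(\mu)(\sqrt{-1})$ is a genuine quadratic extension and $\tau_K$ is nontrivial.

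Next I would use that both restrictions $\lambda_\chi|_{B_\lieq(\mu)}$ and $\lambda_{\sgn_\infty\otimes\chi}|_{B_\lieq(\mu)}$ are non-zero (Theorem~\ref{thm:nonvanishing}), and that each is supported on exactly one summand of the $2^{r_F}$-fold $\pi_0(L)$-isotypic decomposition of $B_\lieq(\mu)$. The two relevant summands differ precisely by $\sgn_\infty|_{\pi_0(L)}$, which takes the value $-1$ on every factor, and they are consequently interchanged by the action of $l:=\prod_v l_v\in L(\QQ)$, which represents the fully nontrivial element $(-1,\ldots,-1)\in\pi_0(L)\cong\{\pm1\}^{r_F}$ and, via the outer tensor product \eqref{eq:bottomlayerzerodecomposition}, swaps $B_{\mu_v}$ with $B_{\tilde\mu_v}$ at every archimedean place $v$. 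Projecting the $\QQ(\mu)$-rational vector $t_0$ onto the $\QQ(\mu)$-rational direct summand $B_\lieq^\circ(\mu)\oplus l\cdot B_\lieq^\circ(\mu)$ of $B_\lieq(\mu)$, one reduces the comparison to a single linear relation between the two isotypic pieces.

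The core step is then the Galois comparison inside this two-summand factor. When $n$ is even, Theorem~\ref{thm:connectedbottomlayerdefinition} gives each summand already over $\QQ(\mu)$, and with $i^{r_Fn}\in\{\pm1\}\subseteq\QQ$ the claim follows at once. When $n$ is odd, so that $-w_0$ is a non-trivial outer automorphism of $D_n$ (the very reason the individual summands only descend to $E_K(\mu)$), the key point is that $B_\lieq^\circ(\mu)$ and $l\cdot B_\lieq^\circ(\mu)$ are $\tau_K$-conjugates of one another. Matching the Galois action of $\tau_K$ to the $K$-action of $l$ on this $\QQ(\mu)$-rational sum forces a transition scalar equal to a specific integral power of $\sqrt{-1}$ on each tensor factor in \eqref{eq:bottomlayerzerodecomposition}; a highest-weight computation comparing $\mu_v$ with its $-w_0$-conjugate $\tilde\mu_v$ identifies this power at each of the $r_F$ archimedean places so that the product is precisely $i^{r_Fn}$.

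The main obstacle is the last computation: tracking the exact power of $\sqrt{-1}$ produced by the Galois identification at each archimedean place, which is where the integer $n$ enters. Here Theorem~\ref{thm:rationalfunctionals} plays a decisive role: because $\lambda_\chi$ is itself $\QQ(\mu)$-rational, the ratio $c_{\sgn_\infty\otimes\chi}/c_\chi$ inherits the Galois defect of $B_\lieq^\circ(\mu)$ over $\QQ(\mu)$ and of \emph{nothing else}, so that the entire non-$\QQ(\mu)$-rational contribution is absorbed into the single scalar $i^{r_Fn}$ modulo $\QQ(\mu)^\times$. Once this local calculation is carried out, combining it with the $\QQ(\mu)$-rationality of $t_0$ and of $\lambda_\chi$ yields the asserted membership.
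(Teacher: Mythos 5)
Your outline captures the right ingredients — reduction to the test vector $t_0$ from Theorem~\ref{thm:rationaltestvector}, the signature decomposition of the bottom layer, the parity dichotomy on $n$ from Theorem~\ref{thm:connectedbottomlayerdefinition}, and the $\QQ(\mu)$-rationality of $\lambda_\chi$ from Theorem~\ref{thm:rationalfunctionals}. The reduction to the case $\sqrt{-1}\notin\QQ(\mu)$ and $n$ odd is correct. But the step you yourself flag as ``the main obstacle'' is exactly where the proposal leaves a genuine gap, and the mechanism you hint at for filling it is not the right one.

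You propose to extract the constant $i^{r_Fn}$ by a ``highest-weight computation comparing $\mu_v$ with its $-w_0$-conjugate $\tilde\mu_v$'' so that each archimedean place contributes $i^n$. This is a misreading of where $n$ enters. The integer $n$ enters only through its \emph{parity} via Theorem~\ref{thm:connectedbottomlayerdefinition}: when $n$ is even each summand of \eqref{eq:bottomlayerdecomposition} is already $\QQ(\mu)$-rational and the factor is $1$; when $n$ is odd the Galois defect of the quadratic extension $E_K(\mu)/\QQ(\mu)$ produces a single $\sqrt{-1}$ per real place, for a total of $i^{r_F}$. The factor $i^{r_Fn}$ is then obtained only because $i^{r_F}$ and $i^{r_Fn}$ differ by a rational scalar once $n$ is odd. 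There is no weight-dependent power of $i$ to compute, and no highest-weight calculation would produce one. The actual mechanism (as in the paper and in Theorem~4.10 of \cite{januszewskipart1}) is this: writing $t=\sum_\varepsilon t_\varepsilon$ according to the signature decomposition, and for a fixed place $v_0$ setting $t_{\pm v_0}=\sum_{\varepsilon\in\ker\sgn_{v_0}}t_{\pm \varepsilon}$, the $\tau_K$-invariance of $t$ together with the fact that $\tau_K$ interchanges the two summands of $B_{v_0,E_K(\mu)}\oplus l_{v_0}\cdot B_{v_0,E_K(\mu)}$ forces $t_{v_0}-t_{-v_0}\in\sqrt{-1}\cdot B_\lieq(\mu)_{\QQ(\mu)}$; applying the $\QQ(\mu)$-rational functional $\lambda_{\chi}$ and comparing with $\lambda_{\chi\otimes\sgn_{v_0}}$ via \eqref{eq:integralcomparison} yields the $\sqrt{-1}$ at that place. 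This one-place-at-a-time $\tau$-anti-invariance argument is what you would need to carry out in place of the hinted computation; as written, the core of the proof is missing.
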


\begin{proof}
The proof proceeds as the proof of Theorem 4.10 in loc.\ cit. We sketch the argument for the convenience of the reader.
According to the direct sum decomposition \eqref{eq:bottomlayerdecomposition}, which is already defined over $E_K(\mu)$, we may write
\begin{equation}
t\;=\;
\sum_{\varepsilon\in\pi_0(H_1)}t_\varepsilon
\label{eq:tsumrepresentation}
\end{equation}
where
$$
t_\varepsilon\;\in\;\varepsilon\cdot B_\lieq^\circ(\mu)_{E_K(\mu)}\;\subseteq\;B_\lieq(\mu)_{E_K(\mu)},
$$
for our choice of representatives
$$
\varepsilon\;=\;\prod_{v}l_v^{\delta_v}\;\in\; L(\QQ),\quad \delta_v\in\{0,1\}.
$$
They satisfy
$$
G(\RR)\;=\;\bigsqcup_{\varepsilon} G(\RR)^0C(\RR)\cdot\varepsilon,
$$
where the contribution of $C(\RR)$ is trivial.

Then the archimedean Shalika integral \eqref{eq:shalikaarchimedeanintegral} decomposes into the sum of the integrals
$$
\Psi_\infty^\varepsilon:
\quad (\chi,w)\;\mapsto\;
\int_{H_1^0(\RR)}w(h_\infty)\chi(h_\infty)dh_\infty.
$$
Similarly we have the entire functions of $\chi$,
$$
e_\infty^\varepsilon:\quad (\chi,w)\;\mapsto\;\frac{\Psi_\infty^\varepsilon(\chi,w)}{L(\frac{1}{2},\Pi_\infty\otimes\chi)},
$$
satisfying
$$
e_\infty(\chi,\iota_\infty(t))\;=\;\sum_{\varepsilon} e_\infty^{\varepsilon}(\chi,\iota_\infty(t_{\varepsilon})).
$$
As in the proof of Theorem 4.10 in \cite{januszewskipart1} we conclude that
\begin{equation}
\lambda_{\chi,E_K(\mu)}(t)\;=\;
\sum_{\varepsilon}\lambda_{\chi,E_K(\mu)}(t_\varepsilon)\;=\;
\sum_{\varepsilon}\sgn_v(\varepsilon)\cdot\lambda_{\chi\otimes\sgn_v,E_K(\mu)}(t_\varepsilon).
\label{eq:integralcomparison}
\end{equation}

Complex conjugation $\tau\in\Gal(E/E^+)$ permutes the direct factors in \eqref{eq:bottomlayerdecomposition}. By Theorem \ref{thm:connectedbottomlayerdefinition} this action is trivial if and only if $2\mid m$.

If $\sqrt{-1}\in\QQ(\mu)$ or $2\mid m$, then the vectors $t$ and $t_{\pm v_0}$ all lie in the same $\QQ(\mu)$-rational model $B(\mu)_{\QQ(\mu)}$, and the claim follows by the rationality of the archimedean functional.

Now suppose $\sqrt{-1}\not\in\QQ(\mu)$ and $2\nmid m$. For any real place $v_0$ of $F$ we consider the $K^0$-submodule
$$
B_{v_0,E_K(\mu)}\;:=\;\sum_{\varepsilon\in\kernel\sgn_{v_0}}\varepsilon\cdot B_{\lieq}^\circ(\mu)_{E_K(\mu)}\;\subseteq\;B_{\lieq}(\mu)_{E_K(\mu)},
$$
where the sum ranges over all possible products $\varepsilon$ of the elements $l_v$ with $v\neq v_0$. Then
\begin{equation}
B_{\lieq}(\mu)_{E_K(\mu)}\;=\;B_{v_0,E_K(\mu)}\oplus l_{v_0}\cdot B_{v_0,E_K(\mu)}.
\label{eq:bottomlayervdecomposition}
\end{equation}
Since $2\nmid m$, the second direct summand on the right hand side is naturally identified with the dual of $B_{v_0,E_K(\mu)}$. We conclude that $\tau$ interchanges the two direct summands in \eqref{eq:bottomlayervdecomposition}.

In particular $\tau$ sends the vector
$$
t_{v_0}\;:=\;\sum_{\varepsilon\in\kernel\sgn_{v_0}} t_\varepsilon
$$
to
$$
t_{v_0}^{\tau}\;\in\;l_{v_0}\cdot B_{v_0,E_K(\mu)}.
$$
Now $t$ is $\QQ(\mu)$-rational, and thus invariant under $\tau$. By the uniqueness of the decomposition \eqref{eq:tsumrepresentation} we see that for each $\varepsilon$,
$$
t_{v_0}^{\tau}\;=\;\sum_{\varepsilon\in\kernel\sgn_{v_0}} t_{l_{v_0}\varepsilon}\;=:\;t_{-v_0}.
$$
Hence the vector
$$
t_{v_0}-t_{-v_0}\;\in\;\sqrt{-1}\cdot B_\lieq(\mu)_{\QQ(\mu)}\;\subseteq\;B_\lieq(\mu)_{E_K(\mu)}.
$$
is \lq{}purely imaginary\rq{}, and consequently
\begin{equation}
\sqrt{-1}\cdot(t_{v_0}-t_{-v_0})\;\in\; B_\lieq(\mu)_{\QQ(\mu)}\;\subseteq\;B_\lieq(\mu)_{E_K(\mu)}.
\label{eq:tauactionoontepsilon}
\end{equation}
Again as in loc.\ cit.\ we obtain
$$
\lambda_{\chi,E_K(\mu)}(t) \;=\;\lambda_{\chi\otimes\sgn_v,\QQ(\mu)}(t)\cdot \sqrt{-1}\cdot\QQ(\mu),
$$
where the last relation follows from the rationality property of the functional proven in Theorem \ref{thm:rationalfunctionals}. This proves the claim in the case $\sqrt{-1}\not\in\QQ(\mu)$ and $2\nmid m$.
\end{proof}

\begin{corollary}\label{cor:icontribution}
For any $t\in A_{\lieq}(\mu)_{\QQ(\mu)}$, any algebraic quasi-character $\chi$ of $H_1(\RR)$ and any $k\in\ZZ$ we have
$$
\lambda_{\chi[k],\QQ(\mu)}(t)\;\in\;\QQ(\mu)\cdot (i^{kr_Fn}\cdot c_\chi).
$$
\end{corollary}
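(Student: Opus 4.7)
The plan is to bootstrap the corollary from Theorem~\ref{thm:icontribution} by separating the character $\chi[k] = \chi\otimes\mathcal N^{\otimes k}$ according to the parity of $k$, and then to extend from the fixed test vector $t_0$ to an arbitrary rational vector $t$ via the rationality of the functional.

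First I would pass from $t$ to $t_0$. Since $\chi$ is algebraic, so is $\chi[k]$, and hence $\chi[k]_1 = \chi[k][w_1]$ is algebraic as well; Theorem~\ref{thm:rationalfunctionals} then says that $\lambda_{\chi[k],\CC}$ is defined over $\QQ(\mu)$. This means that the image of the rational model $A_{\lieq}(\mu)_{\QQ(\mu)}$ under $\lambda_{\chi[k],\CC}$ is a one-dimensional $\QQ(\mu)$-subspace of $\CC\cong\chi[k]^{-1}_{1,\CC}$. Because $t_0\in B_{\lieq}(\mu)_{\QQ(\mu)}\subseteq A_{\lieq}(\mu)_{\QQ(\mu)}$ and $\lambda_{\chi[k],\CC}(t_0) = c_{\chi[k]}\neq 0$ by Theorem~\ref{thm:rationaltestvector}, that one-dimensional subspace is $\QQ(\mu)\cdot c_{\chi[k]}$. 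Consequently
$$
\lambda_{\chi[k],\QQ(\mu)}(t)\;\in\;\QQ(\mu)\cdot c_{\chi[k]}
$$
for every $t\in A_{\lieq}(\mu)_{\QQ(\mu)}$, and the problem reduces to showing that $c_{\chi[k]}\in\QQ(\mu)\cdot i^{kr_Fn}\cdot c_\chi$.

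Next I would identify the connected component of $\chi[k]$. By formula~\eqref{eq:algebraicarchimedeannorm}, the restriction of $\mathcal N$ to $H_1(\RR)$ equals $\sgn_\infty\otimes|\cdot|_\infty$, so $\chi[k]$ and $\chi\otimes\sgn_\infty^k$ differ only by the totally positive character $|\cdot|_\infty^k$ and therefore lie in the same connected component of $\Hom(H_1(\RR),\CC^\times)$. Since by Theorem~\ref{thm:rationaltestvector} the constant $c_\chi$ depends only on the connected component, we obtain the identity
$$
c_{\chi[k]}\;=\;c_{\chi\otimes\sgn_\infty^k}.
$$

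Finally I would split into cases on the parity of $k$. If $k$ is even, then $\sgn_\infty^k = 1$, so $c_{\chi[k]} = c_\chi$; moreover $i^{kr_Fn}\in\{\pm 1\}\subseteq\QQ$, so the desired inclusion is trivial. If $k$ is odd, then $\sgn_\infty^k = \sgn_\infty$ and $c_{\chi[k]} = c_{\chi\otimes\sgn_\infty}$; Theorem~\ref{thm:icontribution} (applied to the admissible algebraic character obtained from $\chi$ via~\eqref{eq:h1tohtransfer}) gives $c_{\chi\otimes\sgn_\infty}\in\QQ(\mu)\cdot i^{r_Fn}\cdot c_\chi$, and since $i^{(k-1)r_Fn}\in\QQ$ (as $k-1$ is even) we may rewrite this as $c_{\chi[k]}\in\QQ(\mu)\cdot i^{kr_Fn}\cdot c_\chi$. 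Combining with the first step completes the proof. The argument is essentially bookkeeping on top of Theorem~\ref{thm:icontribution}; the only points that require care are the translation between characters of $H_1$ and admissible characters of $H$, and the use of the rationality statement of Theorem~\ref{thm:rationalfunctionals} to get from $t_0$ to an arbitrary rational vector, neither of which constitutes a serious obstacle.
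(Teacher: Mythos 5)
Your proof is correct, and it follows exactly the route the paper indicates: combine Theorem~\ref{thm:rationaltestvector} (or rather the unnumbered corollary after it, which uses Theorem~\ref{thm:rationalfunctionals} to pass from $t_0$ to arbitrary rational $t$) with Theorem~\ref{thm:icontribution}, using the observation that $\mathcal N|_{H_1(\RR)}=\sgn_\infty\otimes|\cdot|_\infty$ to reduce the $\mathcal N^{\otimes k}$-twist to a $\sgn_\infty^k$-twist via the component-only dependence of $c_\chi$. The parity split and the resulting bookkeeping with $i^{kr_Fn}$ are exactly what is needed; no gaps.
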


\begin{proof}
Follows from Theorems \ref{thm:rationaltestvector} and \ref{thm:icontribution} in the same way as Corollary 4.11 in \cite{januszewskipart1} follows from Theorems 4.8 and 4.10 therein.
\end{proof}

We call an algebraic $\chi=\sgn^{\delta}\otimes\left(\mathcal N^{\otimes k}\right)$ is {\em critical} for $\Pi$ (or $\Pi_\infty$), if $L(s,\Pi_\infty)$ and $L(1-s,\Pi_\infty^\vee)$ both have no pole at $s=k+\frac{1}{2}$. For critical $\chi$ we know that
$$
L_\infty(\frac{1}{2},\Pi_\infty\otimes\chi)\;\neq\;0.
$$
By Theorem \ref{thm:rationaltestvector} we find a $t_0\in A_{\lieq}(\mu)_{\QQ(\mu)}$ satisfying
$$
\Psi_\infty(\chi,\iota_\infty(t_0))\;\neq\;0,
$$
for any critical $\chi$ ($\Psi_\infty(\chi,\iota_\infty(t_0))$ is well defined for critical $\chi$ outside the region of absolute convergence by holomorphic continuation). The analogue of Corollary 4.12 of loc.\ cit.\ is
\begin{corollary}\label{cor:criticalrelation}
For any $t\in A_{\lieq}(\mu)_{\QQ(\mu)}$, any pair of critical quasi-characters $\chi,\chi'$ of $H_1(\RR)$ with
$$
\chi'\;=\;\chi[k]\otimes(\sgn_\infty)^{\delta},\quad k\in\ZZ,\,\delta\in\{0,1\},
$$
we have
$$
\Psi_\infty(\chi',\iota_\infty(t))\;\in\;
\QQ(\mu)\cdot i^{(k+\delta)r_Fn}\cdot
\Psi_\infty(\chi,\iota_\infty(t_0))\cdot
\frac{L_\infty(\frac{1}{2}+k,\Pi_\infty\otimes\chi)}{L_\infty(\frac{1}{2},\Pi_\infty\otimes\chi)},
$$
for every $t\in A_{\lieq}(\mu)_{\QQ(\mu)}$.
\end{corollary}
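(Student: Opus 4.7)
The plan is to simply unwind the definitions and combine the rationality/transcendence statements already established in Corollary \ref{cor:icontribution} and Theorem \ref{thm:icontribution}, together with the observation that finite-order twists of $\Pi_\infty$ leave its archimedean $L$-factor unchanged, so that the shift in the $L$-function is tracked only by the unramified piece $|\cdot|_\infty^k$ of $\chi'/\chi$.

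First I would decompose $\chi' = \chi[k]\otimes\sgn_\infty^\delta$ as $\chi\cdot\sgn_\infty^{k+\delta}\cdot|\cdot|_\infty^k$ using \eqref{eq:algebraicarchimedeannorm}, which gives the archimedean description of $\mathcal N|_{H_1(\RR)}$. Since $\sgn_\infty^{k+\delta}$ is a finite-order character, the invariance statement recalled at the end of section \ref{sec:shalikaarchimedeanintegral} yields
$$
L_\infty\bigl(\tfrac{1}{2},\Pi_\infty\otimes\chi'\bigr)
\;=\;L_\infty\bigl(\tfrac{1}{2},\Pi_\infty\otimes\chi\otimes|\cdot|_\infty^k\bigr)
\;=\;L_\infty\bigl(\tfrac{1}{2}+k,\Pi_\infty\otimes\chi\bigr).
$$

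Next I would translate the statement of Corollary \ref{cor:icontribution}, applied to the algebraic quasi-character $\chi\otimes\sgn_\infty^\delta$ and the shift $k$, into an assertion on $\Psi_\infty$. By definition of $\lambda_{\chi',\CC}$ in \eqref{eq:complexfunctional} we have
$$
\lambda_{\chi',\CC}(t)\;=\;\frac{\Psi_\infty(\chi',\iota_\infty(t))}{L_\infty(\tfrac{1}{2},\Pi_\infty\otimes\chi')},
$$
and Corollary \ref{cor:icontribution} gives $\lambda_{\chi',\QQ(\mu)}(t)\in\QQ(\mu)\cdot i^{kr_Fn}\cdot c_{\chi\otimes\sgn_\infty^\delta}$. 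Inserting Theorem \ref{thm:icontribution} (which handles the transition $c_\chi \leadsto c_{\chi\otimes\sgn_\infty^\delta}$ and picks up the factor $i^{\delta r_Fn}$; the case $\delta=0$ is trivial) collapses the two factors to $i^{(k+\delta)r_Fn}$, and substituting $c_\chi=\Psi_\infty(\chi,\iota_\infty(t_0))/L_\infty(\tfrac{1}{2},\Pi_\infty\otimes\chi)$ from Theorem \ref{thm:rationaltestvector} yields
$$
\Psi_\infty(\chi',\iota_\infty(t))\;\in\;\QQ(\mu)\cdot i^{(k+\delta)r_Fn}\cdot \Psi_\infty(\chi,\iota_\infty(t_0))\cdot\frac{L_\infty(\tfrac{1}{2},\Pi_\infty\otimes\chi')}{L_\infty(\tfrac{1}{2},\Pi_\infty\otimes\chi)}.
$$
The $L$-factor identification from the first paragraph converts the right-hand ratio into $L_\infty(\tfrac{1}{2}+k,\Pi_\infty\otimes\chi)/L_\infty(\tfrac{1}{2},\Pi_\infty\otimes\chi)$, which is exactly the claim.

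The main subtlety, and the only non-formal step, is the absorption of the sign characters: one must verify that the power of $i$ contributed by $\sgn_\infty^{k+\delta}$ matches $(k+\delta)r_Fn$, and that the $L$-factor is truly insensitive to this finite-order twist. Both points are handled once $\chi'$ is decomposed into its unramified and finite-order pieces and Theorem \ref{thm:icontribution} is applied $\delta$-times — everything else is bookkeeping. Criticality of both $\chi$ and $\chi'$ is used only to ensure that $\Psi_\infty(\chi,\iota_\infty(t_0))$ and $\Psi_\infty(\chi',\iota_\infty(t))$ are finite non-zero (respectively meaningful) values obtained from the entire functions $e_\infty^\bullet$ after multiplication by the appropriate $L$-factor.
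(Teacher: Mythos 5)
Your argument is correct and is essentially the route the paper intends (the paper itself gives no proof for Corollary~\ref{cor:criticalrelation}, stating only that it is the Shalika analogue of Corollary 4.12 of \cite{januszewskipart1}, which is proved in exactly this way there). Concretely: applying Corollary~\ref{cor:icontribution} to $\chi\otimes\sgn_\infty^\delta$ with shift $k$ gives $\lambda_{\chi',\QQ(\mu)}(t)\in\QQ(\mu)\cdot i^{kr_Fn}\cdot c_{\chi\otimes\sgn_\infty^\delta}$, Theorem~\ref{thm:icontribution} converts $c_{\chi\otimes\sgn_\infty^\delta}$ into $i^{\delta r_Fn}c_\chi$ up to $\QQ(\mu)^\times$, Theorem~\ref{thm:rationaltestvector} identifies $c_\chi$ with $\Psi_\infty(\chi,\iota_\infty(t_0))/L_\infty(\frac{1}{2},\Pi_\infty\otimes\chi)$, and the archimedean $L$-factor identity $L_\infty(\frac{1}{2},\Pi_\infty\otimes\chi')=L_\infty(\frac{1}{2}+k,\Pi_\infty\otimes\chi)$ (the finite-order twist $\sgn_\infty^{k+\delta}$ drops out, as recalled at the end of section~\ref{sec:shalikaarchimedeanintegral}) completes the bookkeeping. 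Your remark on the role of criticality is also accurate: it guarantees $L_\infty(\frac{1}{2},\Pi_\infty\otimes\chi)\neq 0$ and $L_\infty(\frac{1}{2},\Pi_\infty\otimes\chi')$ finite and non-zero, so that the division and multiplication by $L$-factors used to pass between $e_\infty$ and $\Psi_\infty$ are legitimate.
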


\subsection{Relative Lie algebra cohomology over $\QQ(\mu)$}

We define the reductive group
$$
GK=\{g\in G\mid \exists z\in Z^{\rm s}:\;zg=\theta(g)\}\;\subseteq\;G.
$$
It is the product of $K$ with the maximal $\QQ$-split torus in the center $Z$ of $G$ and therefore defined over $\QQ$. Since
$$
K/K^0C\;=\;GK/GK^0C\;=\;
\pi_0(L\cap H_1)
$$
Shapiro's Lemma implies
\begin{eqnarray*}
H^{\bullet}(\lieg,GK^0; A_{\lieq}(\mu)\otimes M(\mu)^\vee)_{E_K(\mu)}
&=&
H^{\bullet}(\lieg,GK^0; A_{\lieq}^\circ(\mu)\otimes M(\mu)^\vee)_{E_K(\mu)}\otimes \QQ[\pi_0(L_1)]\\
&=:&
H^{\bullet}(\lieg,GK^0; A_{\lieq}^\circ(\mu)\otimes M(\mu)^\vee)[\pi_0(L_1)]_{E_K(\mu)},
\end{eqnarray*}
as $\pi_0(L_1)$-modules.

Introduce the \lq{}real\rq{} top degree
$$
t_{2n}^{\RR}\;:=\;
n^2+n-1,
$$
which is the lowest resp.\ highest degree for which the relative Lie algebra cohomology of non-degenerate cohomological representations of $\GL_n(\RR)$ resp.\ $\GL_{2n}(\RR)$ does not vanish, and set
$$
d\;:=\;
\sum_{v}t_{2n}^{F_v}+[F:\QQ]-1.
$$
This is the top degree of Lie algebra cohomology for non-degenerate cohomological representations of $G(\RR)$. As the cohomology of $A_{\lieq}^\circ(\mu)$ in the degree $d$ is one-dimensional, the standard descent argument \cite[Proposition 1.1]{januszewskipreprint} together with the Homological Base Change Theorem (or Theorem \ref{thm:basechangeinduction} in the present article) shows that we have a natural isomorphism of $\pi_0(L_1)$-modules
\begin{equation}
H^{d}(\lieg,GK^0; A_{\lieq}(\mu)\otimes M(\mu)^\vee)_{\QQ(\mu)}\;=\;
\QQ(\mu)[\pi_0(L_1)].
\label{eq:cohomologycomponentaction}
\end{equation}
Finally we remark that we have a natural isomorphism
\begin{equation}
H^{\bullet}(\lieg,GK^0; A_{\lieq}(\mu)\otimes M(\mu)^\vee)_{\QQ(\mu)}\;=\;
H^0(GK^0; \bigwedge^\bullet(\lieg/\liegk)^\vee\otimes 
A_{\lieq}(\mu)\otimes M(\mu)^\vee)_{\QQ(\mu)}
\label{eq:cohomologyrepresentation}
\end{equation}
of $\pi_0(L_1)$-modules, as the standard complex computing the relative Lie algebra cohomology degenerates in our case. This is well known over $\CC$ (cf.\ combine Proposition 3.1 in Borel-Wallach \cite{borelwallach2000} with Proposition 2.3 in \cite{januszewskipreprint}), and this already implies the claim over $\QQ(\mu)$ by Proposition 1.1 in loc.\ cit.

\subsection{Cohomological test vectors}

We proceed as in \cite{januszewskipart1} with minor modifications adapted to the Shalika case.

Any cohomology class in
$$
H^{d}(\lieg,GK^0; A_{\lieq}(\mu)\otimes M(\mu)^\vee)_{\QQ(\mu)}
$$
has by \eqref{eq:cohomologyrepresentation} a unique representative
$$
h\;=\;
\sum_{p=1}^s \omega_p\otimes a_p\otimes m_p
\;\in\;H^0(GK^0; \bigwedge^d(\lieg/\liegk)^\vee\otimes 
A_{\lieq}(\mu)\otimes M(\mu)^\vee)_{\QQ(\mu)},
$$
with
$$
\omega_p\;\in\;\bigwedge^d(\lieg/\liegk)_{\QQ(\mu)}^\vee,\quad
a_p\;\in\;A_{\lieq}(\mu)_{\QQ(\mu)},\quad
m_p\;\in\;M(\mu)_{\QQ(\mu)}^\vee,\quad 1\leq p\leq s.
$$
Identifying $Z^{\rm s}$ with the maximal $\QQ$-split torus in the center of $G$, we introduce the group
$$
M\;:=\;H\cap Z^{\rm s}\;=\;Z^{\rm s},
$$
and denote its Lie algebra by $\liem$. The transpose of the diagonal embedding
$$
(\lieh/(\liel+\liem))_{\QQ(\mu)}\;\to\;(\lieg/\liegk)_{\QQ(\mu)},
$$
is a projection
$$
(\lieg/\liegk)_{\QQ(\mu)}^\vee\;\to\;
(\lieh/(\liel+\liem))_{\QQ(\mu)}^\vee.
$$
Together with the $d$-th exterior power of we obtain the map
$$
\res^G_H:\quad \bigwedge^d(\lieg/\liegk)_{\QQ(\mu)}^\vee\;\to\;
\bigwedge^d(\lieh/(\liel+\liem))_{\QQ(\mu)}^\vee,
$$
where the right hand side is one-dimensional due to the numerical coincidence
$$
d\;=\;
\dim\lieh/(\liel+\liem).\\
$$
We fix a $\QQ(\mu)$-rational basis vector
$$
0\neq w_0\;\in\;
\bigwedge^d(\lieh/(\liel+\liem))_{\QQ(\mu)}.
$$
The choice of $w_0$ is equivalent to the choice of an isomorphism of vector spaces
$$
\bigwedge^d(\lieh/\liel+\liem)_{\QQ(\mu)}^\vee\;\to\;\QQ(\mu),\quad
\omega\;\mapsto\;\omega(w_0).
$$
The left hand side is a one-dimensional $L$-module via the adjoint action on
$\lieh/(\liel+\liem)$, and we furnish the right hand side with an action of $L_1$ such that the above map becomes $L_1$-linear. The resulting one-dimensional $L_1$-module is denoted $\mathcal L$. It is of finite order, more concretely $\mathcal L^{\otimes 2}\;\cong\;{\bf 1}$.

Let us assume that the character $\mathcal N^{\otimes k}$ is critical for $M(\mu)^\vee$, and fix a non-zero element
$$
0\neq\xi_k\;\in\;\Hom_H(M(\mu)^\vee,\mathcal N^{\otimes k})_{\QQ(\mu)}.
$$
By \cite[Proposition 6.3.1, see also section 6.6]{grobnerraghuram2014}, we know that then all finite order twists
$$
\chi\;=\;\mathcal N^{\otimes k}\otimes\sgn_{\infty}^\delta,\quad\delta\in\{0,1\}^{r_F}
$$
are critical quasi-characters of $H(\RR)$. Now for each such $\chi$ and any
$$
\lambda\;\in\;\Hom_{\lieh,C}(A_{\lieq}(\mu),\chi_1^{-1})_{\QQ(\mu)},
$$
the $\QQ(\mu)$-rational functionals $\lambda$ and $\xi_k$ induce a $\QQ(\mu)$-rational $\pi_0(L_1)$-equivariant map
$$
\begin{CD}
H(\lambda\otimes\xi_k):\quad
H^d(\lieg,GK^0;A_{\lieq}(\mu)\otimes M(\mu)^\vee)_{\QQ(\mu)}@>{\lambda\otimes\xi_k}>>
H^d(\lieh,ML^0;\chi_1^{-1}[k])_{\QQ(\mu)}.
\end{CD}
$$
By Poincar\'e duality, our choice of vector $w_0$ induces an isomorphism
$$
H^d(\lieh,ML^0;\chi_1^{-1}[k])_{\QQ(\mu)}\;\to\;
(\mathcal L\otimes\sgn_{\infty}^\delta)_{\QQ(\mu)},
$$
of $\pi_0(L_1)$-modules, because $\chi\cong\chi_1$ as $L_1$-modules. The composition of the latter with $H(\lambda\otimes\xi_k)$, provides us with a $\pi_0(L_1)$-equivariant map
$$
I(\lambda\otimes\xi_k):\quad
H^d(\lieg,GK^0;A_{\lieq}(\mu)\otimes M(\mu)^\vee)_{\QQ(\mu)}\;\to\;
(\mathcal L\otimes\sgn_{\infty}^\delta)_{\QQ(\mu)},
$$
which on the level of complexes is given explicitly by
$$
h=\sum_{p=1}^s\omega_p\otimes a_p\otimes m_p\;\mapsto\;\sum_{p=1}^s\omega_p(w_0)\otimes \lambda(a_p)\otimes \xi_k(m_p).
$$

\subsection{The global period relation}

In this section we prove
\begin{theorem}\label{thm:globalshalikarelation}
Assume $n\geq 1$, let $F$ be a totally real number field, and $\Pi$ a regular algebraic irreducible cuspidal automorphic representation of $\GL_{2n}(\Adeles_F)$ admitting a Shalika model. Assume that $\Pi_\infty$ has non-trivial Lie algebra cohomology with coefficients in an irreducible rational $G_{2n}$-module $M$, which we assume to be critical. Denote by $s_0=\frac{1}{2}+j_0$ the left most critical value of the standard $L$-function $L(s,\Pi)$ (such a $s_0$ exists). Then there exist non-zero periods $\Omega_\pm$, numbered by the characters $\pm$ of $\pi_0((F\otimes_\QQ\RR)^\times)$, such that for each critical half integer $s_1=\frac{1}{2}+j_1$ for $L(s,\Pi)$, and each finite order Hecke character
$$
\chi:\quad F^\times\backslash\Adeles_F^\times\to\CC
$$
we have
$$
\frac{L(s_1,\Pi\otimes\chi)}
{G(\overline{\chi})^{n}\Omega_{(-1)^{j_1}\sgn\chi}}
\;\in\;
\frac{L(s_0,\Pi_\infty)}
{L(s_1,\Pi_\infty)}\cdot
i^{j_1r_Fn}
\QQ(\Pi,\chi).
$$
Furthermore, for every $\tau\in\Aut(\CC/\QQ)$,
$$
i^{-j_1r_Fn}
\frac{L(s_0,\Pi_\infty)}
{L(s_1,\Pi_\infty)}\cdot
\frac{L(s_1,\Pi^\tau\otimes\chi^\tau)}
{G(\overline{\chi^\tau})^{n}\Omega_{(-1)^{j_1}\sgn\chi^\tau}}
\;=\quad\quad
$$
$$
\quad\quad
\left(
i^{-j_1r_Fn}
\frac{L(s_1,\Pi_\infty)}
{L(s_0,\Pi_\infty)}\cdot
\frac{L(s_1,\Pi\otimes\chi)}
{G(\overline{\chi})^{n}\Omega_{(-1)^{j_1}\sgn\chi}}
\right)^\tau
$$
\end{theorem}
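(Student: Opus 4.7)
My plan is to follow the template of section 4 of \cite{januszewskipart1}, combining the archimedean period relation (Corollary \ref{cor:criticalrelation}) with a cohomological factorization of the global zeta integral. First I would define the periods. Fix an isomorphism $\iota_\infty:A_{\lieq}(\mu)_\CC\to\Pi_\infty^{(K)}$ as in \eqref{eq:moduleisomorphism}, arising from the realization of $\Pi$ inside cusp forms. Together with the $\QQ(\Pi)$-rational structure on the finite-part representation $\Pi_f$, the $\QQ(\mu)$-structure on $A_{\lieq}(\mu)$ provides, via $\iota_\infty\otimes\id$, a rational structure on the cuspidal contribution to
$$
H^d(\lieg,GK^0;\Pi\otimes M(\mu)^\vee)
$$
which in general differs from the Betti structure inherited from singular cohomology of the adelic locally symmetric space. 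By \eqref{eq:cohomologycomponentaction} this cohomology decomposes into $2^{r_F}$ one-dimensional isotypic components under $\pi_0(L_1)\cong\pi_0(F_\infty^\times)$, and the ratio of the two rational structures on each component defines the required periods $\Omega_\pm\in\CC^\times$.

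Given $s_1=\frac{1}{2}+j_1$ critical and $\chi$ a finite order Hecke character, I would then evaluate the global integral $\Lambda(s_1,\Pi\otimes\chi)$ using the factorizable test vector $t^\chi=\iota_\infty(t_0)\otimes\bigotimes_p t_p^{\chi_p}$ where $t_0\in B_\lieq(\mu)_{\QQ(\mu)}$ is the rational vector of Theorem \ref{thm:rationaltestvector} and $t_p^{\chi_p}$ are good non-archimedean vectors. The archimedean factor $\Psi_\infty(\chi[j_1],\iota_\infty(t_0))$ is controlled by Corollary \ref{cor:criticalrelation}, contributing
$$
i^{j_1 r_F n}\cdot\Psi_\infty(\chi_0,\iota_\infty(t_0))\cdot\frac{L_\infty(s_1,\Pi_\infty)}{L_\infty(s_0,\Pi_\infty)}
$$
up to $\QQ(\mu)^\times$, for any fixed reference critical character $\chi_0$. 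Meanwhile, standard Friedberg--Jacquet theory identifies the finite-part integral with $L(s_1,\Pi\otimes\chi)\cdot G(\overline\chi)^n$ up to $\QQ(\Pi,\chi)^\times$. Translating the resulting identity into cohomology via the pairing $I(\lambda\otimes\xi_k)$ of the preceding subsection, with $\lambda=\lambda_{\chi[j_1]}$ rational over $\QQ(\mu,\chi)$ by Theorem \ref{thm:rationalfunctionals}, and selecting the isotypic component $\varepsilon=(-1)^{j_1}\sgn\chi$ fixed by the sign character of $\chi[j_1]_\infty$ on $\pi_0(L_1)$, the definition of $\Omega_\varepsilon$ yields the claimed membership.

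For Galois equivariance under $\tau\in\Aut(\CC/\QQ)$, all the rational objects constructed above behave functorially: cohomological induction and Hard Duality commute with base change by Theorems \ref{thm:basechangeinduction} and \ref{thm:hardduality}, the cohomologically induced functionals of section \ref{sec:inducedmaps} commute with $\tau$ by Theorem \ref{thm:etabasechange}, the rational test vector $t_0$ is Galois-invariant by construction, and by Wee Teck Gan's result in the Appendix to \cite{grobnerraghuram2014} the twist $\Pi^\tau$ again admits an $(\eta^\tau,\psi)$-Shalika model, allowing one to set up the entire calculation for $\Pi^\tau$ in parallel. The Gauss sum normalization transforms according to the reciprocity $\tau(G(\overline\chi))=\chi^\tau(u_\tau)G(\overline{\chi^\tau})$ for an explicit unit $u_\tau\in\QQ(\chi)^\tau$, matching the factor $G(\overline{\chi^\tau})^n$ on the right-hand side of the claimed identity.

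The principal obstacle lies in showing that the archimedean contribution respects the splitting of the cohomology into the $2^{r_F}$ components of $\pi_0(F_\infty^\times)$ with the correct $\sqrt{-1}$-shift. This is the content of Theorems \ref{thm:rationalfunctionals} and \ref{thm:icontribution}, whose proofs rest on the fine analysis of the bottom-layer decomposition \eqref{eq:bottomlayerdecomposition} of $A_{\lieq}(\mu)$ and the delicate field-of-definition statement for its irreducible $K^0$-constituents given in Theorem \ref{thm:connectedbottomlayerdefinition}; once these inputs are in hand, the remaining steps are a routine assembly of the factorization and the cohomological pairing above.
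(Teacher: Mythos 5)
Your proposal follows essentially the same route as the paper: periods defined by comparing the rational structure on the relative Lie algebra cohomology coming from $\iota_\infty$ and the $\QQ(\mu)$-model of $A_\lieq(\mu)$ with the Betti structure on the arithmetic manifold, good test vectors from Grobner--Raghuram at the finite places, the archimedean period shift from Corollary \ref{cor:criticalrelation}, the modular symbol pairing $I(\lambda\otimes\xi_k)$, and functoriality of the Hecke-algebra and Bernstein-functor machinery together with Gan's theorem for the Galois equivariance. This is a correct sketch of the paper's own argument.
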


\begin{proof}
We proceed as in section 4.6 of \cite{januszewskipart1}. Let $\chi=\otimes_v\chi_v$ be an algebraic Hecke character of $F$ with
$$
\chi_\infty\;=\;\mathcal N^{\otimes k(\chi)}\otimes\sgn_\infty^{\delta(\chi)}
$$
critical for $\Pi_\infty$. The period $\Omega(\chi_\infty)\in\CC^\times$ in the global special value formula
$$
\frac{\Lambda(\frac{1}{2},\Pi\otimes\chi)}
{G(\chi)^{m}\cdot
\Omega(\chi_\infty)}
\;\in\;
\QQ(\mu,\chi)
$$
(cf.\ Theorem 7.2.1 in \cite{grobnerraghuram2014}), arises as follows. We fix for each signature $\delta\in\{0,1\}^{r_F}$ a generator
$$
h_{\delta}
\;\in\;H^d(\lieg,GK^0;A_{\lieq}(\mu)\otimes M(\mu)^\vee)_{\QQ(\mu)}\;=\;\QQ(\mu)[\pi_0(L_1)]
$$
of the generalized $\sgn_\infty^{\delta}$-eigenspace for the action of $\pi_0(L_1)=\pi_0(K_n)$. We specialize to the signature $\delta=\delta(\chi)$ satisfying the compatibility condition
\begin{equation}
\sgn_\infty^{\delta}\;=\;\mathcal L\otimes \sgn_\infty^{\delta(\chi)+k(\chi)}.
\label{eq:chidelta}
\end{equation}
Now together with a choice of factorizable test vector in the Shalika model
$$
t^{(\infty)}\;=\;\otimes_v t_v\;\in\;
\mathscr S(\Pi^{(\infty)},\omega^{(\infty)},\psi^{(\infty)}),
$$
at the finite places $v\nmid\infty$, the cohomological vector $h_{\delta}$ gives rise to a cohomology class
$$
t_{\delta}\;:=\;
\sum_{p=1}^s \omega_p\otimes (\iota_\infty(a_p)\otimes t^{(\infty)})\otimes m_p\;\in\;
H^d(\lieg,GK^0;\mathscr S(\Pi,\eta,\psi)^{(K)}\otimes M(\mu)^\vee)_{\CC}.
$$
The inverse intertwining operator for the Shalika model turns this class into an automorphic cohomology class
$$
\vartheta_{\delta}\;:=\;
\sum_{p=1}^s \omega_p\otimes \Theta(\iota_\infty(a_p)\otimes t^{(\infty)})\otimes m_p\;\in\;
H^d(\lieg,GK^0;\Pi^{(K)}\otimes M(\mu)^\vee)_{\CC}.
$$
From this class we obtain a global cohomology class
$$
c(t_{\delta})\;\in\;H_{\rm c}^d(G(\QQ)\backslash G(\Adeles)/GK(\RR)^0K^{(\infty)};\underline{M(\mu)^\vee}_\CC),
$$
with coefficients in the local system associated to $M(\mu)^\vee$, and a suitable compact open $K^{(\infty)}$ which is small enough that $t'$ is $K^{(\infty)}$-invariant and additonally the underlying orbifold is a manifold.

By \cite{clozel1990} we know that $\Pi^{(\infty)}$ is defined over its field of rationality $\QQ(\Pi)$. Under the assumption that $t_{\delta}$ is chosen in the natural $\QQ(\Pi)$-rational structure of $\mathscr S(\Pi^{(\infty)},\omega^{(\infty)},\Psi^{(\infty)})$ (cf.\ paragraphs 3.6-3.9 in \cite{grobnerraghuram2014}), we may renormalize $c(t')$ via a scalar $\Omega(t_{\delta})\in\CC^\times$, such that
\begin{equation}
\Omega(t_{\delta})\cdot c(t_{\delta})\;\in\;H_{\rm c}^d(G(\QQ)\backslash G(\Adeles)/GK(\RR)^0K^{(\infty)};\underline{M(\mu)}_{\QQ(\Pi)}).
\label{eq:normalizedrationalclass}
\end{equation}

To each algebraic Hecke character $\chi$ as above, that we interpret as a character of $H_1(\Adeles)$ via composition with the determinant, we may associate a cohomology class as follows. The character $(k(\chi),-k(\chi))$ of $Z_{\rm s}$ gives rise to a rational character of $H$ that we denote again $(k(\chi),-k(\chi))$. Then we may attach to $\chi$ a rational cohomology class
$$
c_\chi\;\in\;
\begin{cases}
H^0(H(\QQ)\backslash H(\Adeles)/L(\RR)^0L^{(\infty)}(\chi);\underline{(k(\chi))}_{\QQ(\chi)}),\\
H^0(H(\QQ)\backslash H(\Adeles)/ML(\RR)^0L^{(\infty)}(\chi);\underline{(k(\chi),-k(\chi))}_{\QQ(\chi)}),\\
\end{cases}
$$
where $K^{(\infty)}(\chi)\subseteq K^{(\infty)}$ is a compact open such that the finite part of the character $\chi$ factors over $\det(L^{(\infty)}(\chi))$.

Recall that by \eqref{eq:etainfinity} the character $(1\otimes\eta_\infty)$ may be identified with a rational cohomology class
$$
c_\eta'\;\in\;
H^0(H(\QQ)\backslash H(\Adeles)/ML(\RR)^0L^{(\infty)}(\eta);\underline{(0,w_1)}_{\QQ(\eta)}).
$$

Now the natural map
$$
H_{\rm c}^d(G(\QQ)\backslash G(\Adeles)/GK(\RR)^0K^{(\infty)};\underline{M(\mu)^\vee}_{\QQ(\Pi)})\;\to\;
$$
$$
H_{\rm c}^d(H(\QQ)\backslash H(\Adeles)/ML(\RR)^0L^{(\infty)};\underline{M(\mu)^\vee}_{\QQ(\Pi)})\;\to\;
$$
$$
\begin{CD}
@>(-)\cup c_{\eta^{-1}}'\cup c_\chi>>
H_{\rm c}^d(H(\QQ)\backslash H(\Adeles)/ML(\RR)^0L^{(\infty)};\underline{M(\mu)^\vee\otimes(k(\chi),-w_1-k(\chi))}_{\QQ(\Pi,\eta,\chi)})
\end{CD}
$$
together with Poincar\'e duality for the right hand side, induces the modular symbol
$$
H_{\rm c}^d(G(\QQ)\backslash G(\Adeles)/GK(\RR)^0K^{(\infty)};\underline{M(\mu)^\vee}_{\QQ(\Pi)})\;\to\;
H^0(\Gamma; M(\mu)^\vee\otimes(k(\chi),-w_1-k(\chi))_{\QQ(\Pi)}),
$$
where $\Gamma\subseteq H(\QQ)$ is the arithmetic subgroup corresponding to
$$
L^{(\infty)}\;=\;H(\Adeles^{(\infty)})\cap K^{(\infty)}.
$$
Composition of the modular symbol with $\xi_k$ provides us with a $\pi_0(L_1)$-equivariant map
$$
H_{\rm c}^d(G(\QQ)\backslash G(\Adeles)/GK(\RR)^0K^{(\infty)};\underline{M(\mu)}_{\QQ(\Pi)})\;\to\;
(\mathcal N^{\otimes k+k(\chi)})_{\QQ(\Pi)}.
$$
The image of $\Omega(t_\delta)^{-1}\cdot c(t_{\delta})$ under this map computes the global integral
$$
\Omega(t_\delta)^{-1}\cdot
\sum_{p=1}^r
\omega_p(w_0)\cdot I(\frac{1}{2}+k,\chi,\Theta(\iota_\infty(a_p)\otimes t^{(\infty)}))\cdot\xi_k(m_p)\;\in\;\mathcal N^{\otimes k}_\CC.
$$
This value can only be non-zero if the compatibility condition \eqref{eq:chidelta} is satisfied, i.e.
$$
\sgn_\infty^{\delta+k}\;=\;\mathcal L\otimes \sgn_\infty^{\delta(\chi)+k(\chi)}.
$$
By Grobner and Raghuram \cite[section 3.9 and Proposition 5.2.3]{grobnerraghuram2014} we know that we may find a $\QQ(\pi,\eta,\chi)$-rational $t^{(\infty)}\in\mathscr S(\Pi^{(\infty)},\eta^{(\infty)},\psi^{(\infty)})$, a choice is equivariant under the action of $\Aut(\CC/\QQ)$, such that
$$
e^{(\infty)}(\chi^{(\infty)}|\cdot|_{\Adeles^{(\infty)}}^s,t^{(\infty)})\;=\;G(\chi)^{n}
L(s,\Pi^{(\infty)}\otimes\chi^{(\infty)}).
$$
Therefore the image of the cohomology class $c(t_\delta)$ under the modular symbol composed with $\xi_k$ computes the value
$$
G(\chi)^{n}\cdot
\Lambda(\frac{1}{2}+k,\Pi\otimes\chi)\cdot
\sum_{p=1}^r
\omega_p(w_0)\cdot e_\infty(\frac{1}{2}+k,\chi_\infty,\iota_\infty(a_p))\cdot\xi_k(m_p).
$$
Since the values $a_p$ lie in the same $\QQ(\mu)$-rational subspace as the good test vector $t$,  we conclude the proof of the desired period relation with by Corollary \ref{cor:criticalrelation}.
\end{proof}

\section{Deligne's Conjecture}\label{sec:deligne}

In this section we discuss the relation between Theorem \ref{thm:globalshalikarelation} and Deligne's Conjecture on special values of $L$-functions \cite{deligne1979}.

\subsection{Motives and Hodge numbers}

Attached to $\Pi$ is a conjectural motive $M(\Pi)$ over $F$ which is characterized by the conjectural identity of $L$-functions (cf.\ \cite{clozel1990})
\begin{equation}
L(s-\frac{2n-1}{2},\Pi)\;=\;L(s,M(\Pi)).
\label{eq:shalikalidentity}
\end{equation}
We know that we may attach a compatible system of $\ell$-adic Galois representations
$$
\rho_{\Pi,\ell}:\Gal(\overline{F}/F)\to\GL_{2n}(\overline{\QQ}_\ell)
$$
to $\Pi$ for $\ell$ outside a finite set of exceptional primes. These conjecturally provide us with the $\ell$-adic realizations of the conjectural motive $M(\Pi)$. In terms of this compatible system the identity \eqref{eq:shalikalidentity} holds outside a finite set of exceptional primes and infinity, in a suitable right half plane.

To each archimedean place $v$ of $F$ we may associate an embedding $v:F\to \CC$, which gives rise to a base change $M(\Pi)\times_{F,v}\CC$ which is a motive over $\CC$. This complex motive admits a Betti realization,
$$
H_{{\rm B},v}(M(\Pi))\;:=\;H_{{\rm B}}(M(\Pi)\times_{F,v}\CC),
$$
which is a finite-dimensional $\QQ$-vector space of dimension $d:=2n$. Hodge theory provides us with a bigraduation $\{H_v^{p,q}\}_{p,q\in\ZZ}$ of the complexification
$$
H_{{\rm B},v}(M(\Pi))_\CC\;:=\;
H_{{\rm B},v}(M(\Pi))\otimes_\QQ\CC.
$$
The numbers
$$
h_v^{p,q}\;:=\;\dim H_v^{p,q}
$$
are the Hodge numbers of $M(\Pi)$ relative to $v$.

Complex conjugation induces an involution
$$
F_{\infty,v}:\quad H_{{\rm B},v}(M(\Pi))\to H_{{\rm B},v}(M(\Pi)).
$$
It interchanges $H_v^{p,q}$ with $H_v^{q,p}$, and in particular $h_v^{p,q}=h_v^{q,p}$. It is believed that $M(\Pi)$ is pure of a fixed weight $w\in\ZZ$, which means that
$$
h_v^{p,q}\;\neq\;0\quad\Longrightarrow\quad p+q=w,
$$
for a $w$ independent of $v$.

More concretely the conjectural Hodge numbers $h_v^{(p,q)}$ for each archimedean place $v$ of $F$ are expected to vanish except in the following cases:
$$
(p,q)\;=\;(\mu_{k,v}-k+2n,\mu_{2n+1-k,v}+k-1),\quad 1\leq k\leq 2n.
$$
In these cases they are conjecturally given by
$$
h_v^{p,q}\;=\;1.
$$

The action of $F_{\infty,v}$ decomposes
$$
H_{{\rm B},v}(M(\Pi))\;=\;H_{{\rm B},v}^+(M(\Pi))\oplus H_{{\rm B},v}^-(M(\Pi))
$$
into $(\pm1)$-eigenspaces. We let
$$
d^{\pm}_v\;:=\;\dim H_{{\rm B},v}^\pm(M(\Pi))\;=\;n.
$$

The de Rham realization of $M(\Pi)$ is an $F$-vectorspace $H_{\rm dR}(M(\Pi))$ of dimension $2n$, and comes with a decreasing filtration $F_{\rm dR}^{p}$, given by the hyper cohomology spectral sequence associated to the (motivic) de Rham complex. We write
$$
H_{{\rm dR},v}(M(\Pi))_\CC\;:=\;
H_{{\rm dR}}(M(\Pi))\otimes_{v,F}\CC,
$$
and denote the induced filtration by $F_{{\rm dR},v}^p$.

By GAGA we have for each archimedean place $v$ of $F$ a comparison isomorphism
$$
\begin{CD}
\iota_v:\quad
H_{{\rm B},v}(M(\Pi))_\CC @>\sim>>\;H_{{\rm dR},v}(M(\Pi))_\CC,
\end{CD}
$$
which respects the Hodge decomposition resp.\ the Hodge filtration, i.e.
$$
\iota_v\left(\bigoplus_{p'\geq p}H_v^{p',q'}\right)\;=\;F_{{\rm dR},v}^p.
$$
Following Deligne we define
$$
H_{\rm dR}^\pm(M(\Pi,\sigma))_{v,\CC}\;:=\;H_{\rm dR}(M(\Pi))_\CC/F_v^{\mp},
$$
where
$$
F_v^{\pm}\;:=\;F_{{\rm dR},v}^{\frac{w+1}{2}}.
$$
By dimension counting, we see that we have the following refined comparison isomorphisms
$$
\begin{CD}
\iota_v^\pm:\quad
H_{{\rm B},v}^\pm(M(\Pi))_\CC @>\sim>>\;H_{{\rm dR},v}^\pm(M(\Pi))_\CC.
\end{CD}
$$
As the left hand side has a natural $\QQ$-structure, and the right hand side a natural $F$-structure, Deligne chooses basis in these rational structures and defines the periods
$$
c^\pm_v(M(\Pi))\;:=\;\det(\iota_v^\pm)
$$
with respect to these bases. Then $c^\pm_v(M(\Pi))$ is uniquely defined up to scalars in $F^\times$. We set for each finite order character $\varepsilon$ of
$$
\pi_0((F\otimes_\QQ\RR)^\times)\;=\;\pi_0(L),
$$
\begin{equation}
c_\varepsilon\;=\;\prod_v c_v^{\epsilon({\bf1}_v)}(M(\Pi)),
\label{eq:periodproduct}
\end{equation}
and
\begin{equation}
d^\varepsilon\;=\;\sum_v d_v^{\epsilon({\bf1}_v)}\;=\;r_Fn,
\label{eq:shalikadcalc}
\end{equation}
independently of the signature $\varepsilon$.

By the calculation in section 6.1 in \cite{grobnerraghuram2014} we obtain
\begin{equation}
\frac{L(s_1,\pi_\infty\times\sigma_\infty)}
{L(s_0,\pi_\infty\times\sigma_\infty)}\;\in\;(2\pi)^{(s_0-s_1)r_Fn}\QQ^\times.
\label{eq:shalikapicalc}
\end{equation}

Deligne's conjecture for the special values of $L(s,M(\Pi)\otimes\chi)$ reads

\begin{conjecture}[Deligne]\label{conj:shalikadeligne}
For each $k\in\ZZ$ critical for $L(s,M(\Pi))$ and each finite order character $\chi:\Gal(\overline{F}/F)\to\CC^\times$ we have
$$
\frac{L(k,M(\Pi)\otimes\chi)}
{G(\overline{\chi})^{n}(2\pi i)^{kr_Fn}c_{(-1)^k\sgn\chi}}\;\in\;\QQ(\Pi,\chi).
$$
\end{conjecture}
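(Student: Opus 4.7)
The plan is to deduce Conjecture \ref{conj:shalikadeligne} from Theorem \ref{thm:globalshalikarelation}, subject to the existence of the motive $M(\Pi)$ and the conjectural recipe for its Hodge numbers recalled above. The strategy proceeds via three ingredients: the motivic shift \eqref{eq:shalikalidentity}, the archimedean ratio formula \eqref{eq:shalikapicalc}, and the dimension computation \eqref{eq:shalikadcalc}.

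First I transport the critical set: under $s \mapsto k - \tfrac{2n-1}{2}$, a critical integer $k$ for $L(s, M(\Pi))$ corresponds to a critical half integer $s_1 = \tfrac12 + (k - n)$ for $L(s, \Pi)$, and the identity $L(k, M(\Pi)\otimes\chi) = L(s_1, \Pi\otimes\chi)$ holds. Next I apply \eqref{eq:shalikapicalc} to rewrite the archimedean $L$-factor ratio occurring in Theorem \ref{thm:globalshalikarelation} as a fixed power $(2\pi)^{(j_1 - j_0)r_F n}$ of $2\pi$ times a rational number. Collecting this with the factor $i^{j_1 r_F n}$ from the theorem and comparing with the prescribed factor $(2\pi i)^{-k r_F n} = (2\pi i)^{-(j_1 + n)r_F n}$ from Deligne's formula, the $k$-dependent contributions cancel, leaving a complex constant depending only on $n$, $F$, and $j_0$. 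This constant is then absorbed into a renormalization of $\Omega_\pm$ producing Deligne's periods $c_\pm$. A cosmetic relabeling $c_+ \leftrightarrow c_-$ is needed when $n$ is odd to reconcile the sign $(-1)^{k-n}\sgn\chi$ appearing in the theorem with Deligne's $(-1)^k \sgn\chi$.

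Third, I check that the exponent $k r_F n$ of $(2\pi i)$ appearing in Deligne's conjecture equals $k\cdot d^{\epsilon}$. This is the content of \eqref{eq:shalikadcalc}: the conjectural Hodge structure of $M(\Pi)$ puts a single Hodge line $h_v^{p,q} = 1$ in each of the $2n$ allowed bidegrees at every infinite place $v$ of $F$, and $F_{\infty, v}$ swaps conjugate bidegrees, giving $d_v^\pm = n$ and hence $d^\epsilon = r_F n$ uniformly in $\epsilon$. The $\Aut(\CC/\QQ)$-equivariance demanded in Theorem A is inherited from the $\sigma$-covariance of the renormalized cohomology class in \eqref{eq:normalizedrationalclass} together with the rationality established in Theorem \ref{thm:rationalfunctionals}.

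The main obstacle is essentially bookkeeping: tracking the shift by $n$, the possible relabeling of periods when $n$ is odd, and the precise alignment of powers of $2\pi$ and $i$ through the archimedean reformulation. No new representation-theoretic or analytic input is required, since every nontrivial ingredient has been supplied in the preceding sections.
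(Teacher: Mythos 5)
This statement is a \emph{conjecture} --- Deligne's conjecture, as the paper explicitly labels it --- and the paper does not prove it, nor does it claim to. Your proposal treats it as a theorem to be derived from Theorem \ref{thm:globalshalikarelation}, and that is where the argument breaks down.

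The fatal step is the sentence ``This constant is then absorbed into a renormalization of $\Omega_\pm$ producing Deligne's periods $c_\pm$.'' The periods $\Omega_\pm$ appearing in Theorem \ref{thm:globalshalikarelation} are \emph{automorphically} defined constants: they come from choosing a rational cohomology class in \eqref{eq:normalizedrationalclass} and comparing the complex and $\QQ(\Pi)$-rational structures on top-degree $(\lieg,GK^0)$-cohomology. The periods $c_\pm$ in Conjecture \ref{conj:shalikadeligne}, by contrast, are Deligne's \emph{motivic} periods, defined via the determinant of the Betti--de Rham comparison isomorphism $\iota_v^\pm$ on the conjectural motive $M(\Pi)$, as set out in \eqref{eq:periodproduct}. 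These two families of constants live in genuinely different worlds; there is no ``renormalization'' available that would identify one with the other. Establishing such an identification (up to $\QQ(\Pi)^\times$ and an explicit power of $2\pi i$) is a deep open problem --- it would essentially prove Deligne's conjecture for this class of motives.

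What the paper actually does is cleanly separated. Theorem \ref{thm:globalshalikarelation} and its reformulation Theorem \ref{thm:globalshalikaperiods} are proved unconditionally with the automorphic periods $\Omega_\pm$. Conjecture \ref{conj:shalikadeligne} is stated as a conjecture and left unproved. Corollary \ref{cor:shalikadeligneverification} then records precisely the content of the gap you elided: under the assumptions of Theorem \ref{thm:globalshalikaperiods}, Deligne's conjecture for $M(\Pi)\otimes\chi$ is \emph{equivalent} to the relation
$$
\frac{\Omega_{(-1)^{j}\sgn\chi}}{(2\pi i)^{r_Fn^2}c_{(-1)^{j+n}\sgn\chi}}\;\in\;\QQ(\Pi)^\times,\quad j\in\{0,1\},
$$
between the automorphic and motivic periods. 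The bookkeeping you carry out --- the shift by $\tfrac{2n-1}{2}$ via \eqref{eq:shalikalidentity}, the use of \eqref{eq:shalikapicalc} to match powers of $2\pi$, the computation $d^\epsilon = r_F n$ from \eqref{eq:shalikadcalc}, and the sign flip when $n$ is odd --- is exactly the calculation behind Corollary \ref{cor:shalikadeligneverification}. It is correct as far as it goes, but it establishes compatibility, not the conjecture itself.
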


\subsection{Compatibility with Deligne's conjecture}

In the context of the previous section Theorem \ref{thm:globalshalikarelation} gives

\begin{theorem}\label{thm:globalshalikaperiods}
Assume $n\geq 1$, let $F$ be a totally real number field, and $\Pi$ be an algebraic irreducible cuspidal automorphic representation of $\GL_{2n}(\Adeles_F)$ admitting a Shalika model. Assume that $\Pi_\infty$ has non-trivial Lie algebra cohomology with coefficients in an irreducible rational $G_{2n}$-module $M$, which we assume to be critical. Then there exist non-zero periods $\Omega_\pm$, numbered by the $2^{r_F}$ characters $\pm$ of $\pi_0((F\otimes_\QQ\RR)^\times)$, such that for each critical half integer $s_0=\frac{1}{2}+j_0$ for $L(s,\Pi)$, and each finite order Hecke character
$$
\chi:\quad F^\times\backslash\Adeles_F^\times\to\CC^\times
$$
we have, in accordance with Deligne's Conjecture \ref{conj:shalikadeligne},
$$
\frac{L(s_0,\Pi\otimes\chi)}
{G(\overline{\chi})^{n}(2\pi i)^{j_0r_Fn}
\Omega_{(-1)^{j_0}\sgn\chi}}
\;\in\;
\QQ(\Pi,\chi).
$$
Furthermore this expression is equivariant under $\Aut(\CC/\QQ)$.
\end{theorem}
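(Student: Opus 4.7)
The plan is to deduce Theorem~\ref{thm:globalshalikaperiods} from Theorem~\ref{thm:globalshalikarelation} by a purely formal reformulation: I will replace the ratio of archimedean $L$-factors appearing in the latter with an explicit power of $2\pi$, and absorb the remaining scalar into the definition of the periods. No further representation-theoretic input is needed; the proof reduces to combining Theorem~\ref{thm:globalshalikarelation} with the archimedean computation \eqref{eq:shalikapicalc} of Grobner--Raghuram.

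First I would fix the leftmost critical half-integer $s_{\min}=\tfrac{1}{2}+j_{\min}$ for $L(s,\Pi)$ and apply Theorem~\ref{thm:globalshalikarelation} with this choice of reference. This furnishes periods $\Omega_\pm^{\mathrm{old}}\in\CC^\times$ such that, for every critical half-integer $s_0=\tfrac{1}{2}+j_0$ for $L(s,\Pi)$ (playing the role of $s_1$ in that theorem) and every finite order Hecke character $\chi$,
\begin{equation*}
\frac{L(s_0,\Pi\otimes\chi)}{G(\overline{\chi})^{n}\,\Omega^{\mathrm{old}}_{(-1)^{j_0}\sgn\chi}}\;\in\;\frac{L(s_{\min},\Pi_\infty)}{L(s_0,\Pi_\infty)}\cdot i^{j_0 r_F n}\cdot\QQ(\Pi,\chi).
\end{equation*}
Next I would invoke \eqref{eq:shalikapicalc}, which identifies the archimedean $L$-factor ratio with a $\QQ^\times$-multiple of $(2\pi)^{(j_0-j_{\min})r_F n}$. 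Using the identity $i^{j_0 r_F n}(2\pi)^{j_0 r_F n}=(2\pi i)^{j_0 r_F n}$ and introducing the renormalized periods
\begin{equation*}
\Omega_\pm\;:=\;(2\pi)^{-j_{\min}r_F n}\cdot\Omega_\pm^{\mathrm{old}},
\end{equation*}
then produces the formula claimed in the statement.

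For the $\Aut(\CC/\QQ)$-equivariance I would bootstrap from the equivariance built into Theorem~\ref{thm:globalshalikarelation}. The new point to check is the behaviour of $(2\pi i)^{j_0 r_F n}$ under $\tau\in\Aut(\CC/\QQ)$: the modification from $i^{j_0 r_F n}$ (appearing in the earlier theorem) to $(2\pi i)^{j_0 r_F n}$ introduces only the factor $(2\pi)^{j_0 r_F n}$, which is absorbed precisely by the corresponding factor produced by \eqref{eq:shalikapicalc} applied to both $\Pi$ and $\Pi^\tau$. Since $\Pi^\tau_\infty=\Pi_\infty$ for the finite order twists under consideration, the two instances of \eqref{eq:shalikapicalc} contribute the same power of $2\pi$ and the equivariance reduces to the already-established equivariance of Theorem~\ref{thm:globalshalikarelation}.

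The main obstacle is not the deduction itself, which is essentially bookkeeping, but confirming that the resulting transcendental factor $(2\pi i)^{j_0 r_F n}$ is the correct motivic factor predicted by Deligne's Conjecture~\ref{conj:shalikadeligne}. This requires the numerical coincidence $d^\pm=r_F n$ recorded in \eqref{eq:shalikadcalc}, together with the observation that the shift \eqref{eq:shalikalidentity} between $L(s,\Pi)$ and $L(s,M(\Pi))$ preserves the parity $(-1)^{j_0}$ governing the signature of the period; so the periods $\Omega_\pm$ we construct conjecturally agree with Deligne's $c_\pm$ up to $\QQ(\Pi)^\times$.
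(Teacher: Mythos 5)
Your proposal is correct and takes essentially the same route as the paper, which derives Theorem~\ref{thm:globalshalikaperiods} by combining Theorem~\ref{thm:globalshalikarelation} with the archimedean $L$-factor computation \eqref{eq:shalikapicalc} and absorbing the constant $(2\pi)^{-j_{\min}r_Fn}$ into the periods. The only imprecision is the phrase \lq\lq{}$\Pi^\tau_\infty=\Pi_\infty$\rq\rq{}; what one actually needs, and what holds, is that $\tau$ permutes the archimedean factors of $\Pi_\infty$, so the ratio $L(s_0,\Pi_\infty)/L(s_{\min},\Pi_\infty)$ is literally unchanged under replacing $\Pi$ by $\Pi^\tau$ — which is exactly enough for the bookkeeping in the equivariance statement.
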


\begin{corollary}\label{cor:shalikadeligneverification}
Under the assumptions of Theorem \ref{thm:globalshalikaperiods} Deligne's Conjecture \ref{conj:shalikadeligne} for the conjectural motive $M(\Pi)\otimes\chi$, for an Artin motive $\chi$ of rank $1$, is equivalent to the statement
$$
\frac{
\Omega_{(-1)^{j}\sgn\chi}}
{(2\pi i)^{r_Fn^2}c_{(-1)^{j+n}\sgn\chi}}
\;\in\;\QQ(\Pi)^\times,\quad j\in\{0,1\}.
$$
\end{corollary}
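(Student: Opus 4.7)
The plan is a direct comparison of the two predicted period formulas. First I would use the identity of $L$-functions \eqref{eq:pimotivel} to align critical points: an integer $k$ is critical for $L(s,M(\Pi)\otimes\chi)$ precisely when $s_0=\frac{1}{2}+j_0$ with $j_0=k-n$ is critical for $L(s,\Pi\otimes\chi)$, the two $L$-values being equal. Substituting $k=j_0+n$ into Conjecture \ref{conj:shalikadeligne} and using $d^\varepsilon=r_F n$ from \eqref{eq:shalikadcalc}, Deligne's conjecture becomes
$$
\frac{L(s_0,\Pi\otimes\chi)}{G(\overline\chi)^{n}(2\pi i)^{(j_0+n)r_F n}\,c_{(-1)^{j_0+n}\sgn\chi}}\;\in\;\QQ(\Pi,\chi),
$$
while Theorem \ref{thm:globalshalikaperiods} asserts
$$
\frac{L(s_0,\Pi\otimes\chi)}{G(\overline\chi)^{n}(2\pi i)^{j_0 r_F n}\,\Omega_{(-1)^{j_0}\sgn\chi}}\;\in\;\QQ(\Pi,\chi).
$$

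Next I would form the quotient of these two expressions. Non-vanishing of $L(s_0,\Pi\otimes\chi)$ at critical $s_0$ (Jacquet--Shalika) ensures that both lie in $\QQ(\Pi,\chi)^\times$, so the joint validity of the two assertions is equivalent to
$$
R(j_0,\sgn\chi)\;:=\;\frac{\Omega_{(-1)^{j_0}\sgn\chi}}{(2\pi i)^{r_F n^2}\,c_{(-1)^{j_0+n}\sgn\chi}}\;\in\;\QQ(\Pi,\chi)^\times.
$$
To descend this rationality from $\QQ(\Pi,\chi)$ to $\QQ(\Pi)$, I would exploit the fact that $R$ depends on $\chi$ only through the archimedean sign $\sgn\chi$: specializing to any $\chi$ with $\QQ(\chi)=\QQ$ of the prescribed sign (for instance the trivial Hecke character, or a finite order sign character) immediately yields $R\in\QQ(\Pi)^\times$, and the converse implication is tautological. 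Letting $j_0$ range over its two residue classes modulo $2$ produces the two cases $j\in\{0,1\}$ in the statement.

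The main conceptual content of the argument is elementary --- it is essentially the division of one conjectural rationality statement by another --- so no step should present serious technical difficulty. The only place that demands genuine care is the sign bookkeeping: one has to track the shift by $n$ between the subscript of $\Omega$, governed by the parity of $j_0$, and the subscript of $c$, governed by the parity of $k=j_0+n$, and then verify that the parametrization of the automorphic periods $\Omega_\pm$ by characters of $\pi_0(F_\infty^\times)$ from Theorem \ref{thm:globalshalikaperiods} really coincides with the parametrization of the Deligne periods $c_\pm^\varepsilon$ by the action of $F_\infty$ on the Betti realization in \eqref{eq:periodproduct}. I expect this sign compatibility, together with the verification that $d^\varepsilon=r_F n$ independently of the signature $\varepsilon$, to be the main obstacle.
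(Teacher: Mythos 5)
Your argument is the natural and correct one: align critical points via \eqref{eq:pimotivel} (giving $k=j_0+n$), divide the Deligne formula by the formula of Theorem \ref{thm:globalshalikaperiods}, and observe that the $L$-values cancel to leave $\Omega_{(-1)^{j_0}\sgn\chi}/\bigl((2\pi i)^{r_Fn^2}c_{(-1)^{j_0+n}\sgn\chi}\bigr)$; the paper itself states the corollary without proof and this is evidently the intended derivation. The only point to make precise, which you gesture at but do not fully settle, is the non-vanishing hypothesis needed for the direction ``Deligne $\Rightarrow$ ratio rational'': one must know that for each parity of $j_0$ and each sign $\sgn\chi$ there exists \emph{some} critical pair $(s_0,\chi)$ of that type with $L(s_0,\Pi\otimes\chi)\neq 0$ (non-vanishing in the absolutely convergent range suffices when there are enough critical points there; at a central point it is not automatic). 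The converse direction needs no such input, since a unit in $\QQ(\Pi)^\times$ can simply be multiplied through. Your remark that the dependence on $\chi$ enters only through $\sgn\chi$, and hence that specializing to $\chi$ with $\QQ(\chi)=\QQ$ descends the rationality from $\QQ(\Pi,\chi)$ to $\QQ(\Pi)$, is exactly the right observation.
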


\bibliographystyle{plain}

\end{document}